\documentclass{amsart}

\usepackage{amssymb,amsfonts}
\usepackage{amsmath,amscd}
\usepackage[all,arc]{xy}
\usepackage{enumerate}
\usepackage{mathrsfs}
\usepackage[pdftex]{graphicx}
\usepackage[utf8]{inputenc}
\usepackage[T1]{fontenc}
\usepackage[usenames,dvipsnames]{color}
\usepackage[bookmarks=false]{hyperref}
\usepackage{dsfont}
\usepackage{tikz}
\usetikzlibrary{automata,positioning}
\usepackage{multirow}
\usepackage{scalerel}
\theoremstyle{plain}
\newtheorem{thm}{Theorem}[section]
\newtheorem{cor}[thm]{Corollary}
\newtheorem{prop}[thm]{Proposition}
\newtheorem{lem}[thm]{Lemma}
\newtheorem{conj}[thm]{Conjecture}

\theoremstyle{definition}
\newtheorem{defn}[thm]{Definition}

\newtheorem{question}[thm]{Question}

\theoremstyle{remark}
\newtheorem{rmk}[thm]{Remark}

\newcommand{\bbC}{\mathbb{C}} 
\newcommand{\bbD}{\mathbb{D}}

\newcommand{\bbN}{\mathbb{N}} 

\newcommand{\bbR}{\mathbb{R}} 

\newcommand*{\defeq}{\mathrel{\vcenter{\baselineskip0.5ex \lineskiplimit0pt
			\hbox{\scriptsize.}\hbox{\scriptsize.}}}%
	=}

\numberwithin{equation}{section}

\bibliographystyle{plain}

\title{A Riemannian metric on polynomial hyperbolic components}

\author{Yan Mary He and Hongming Nie}
\address{Department of Mathematics, University of Toronto, M5S 2E4, Canada \&
Math Research Unit, University of Luxembourg, L-4364, Luxembourg}
\email{yanmary.he@mail.utoronto.ca}

\address{Einstein Institute of Mathematics, The Hebrew University of Jerusalem, 9190401, Israel}
\email{hongming.nie@mail.huji.ac.il}
\date{\today}

\begin{document}

\maketitle
\begin{abstract}
 We introduce a Riemannian metric on certain hyperbolic components in the moduli space of degree $d \ge 2$ polynomials. Our metric is constructed by considering the measure-theoretic entropy of a polynomial with respect to some equilibrium state. As applications, we show that the Hausdorff dimension function has no local maximum on such hyperbolic components. We also give a sufficient condition for a point not being a critical point of the Hausdorff dimension function.
\end{abstract}

\section{Introduction}\label{intro}
For $d\ge 2$, the parameter space $\mathrm{Poly}_d$ is the space of degree $d$ polynomials, and the moduli space $\mathrm{poly}_d$ is the space of affine conjugacy classes of degree $d$ polynomials.
A polynomial $P\in\mathrm{Poly}_d$ is \emph{hyperbolic} if all the critical points under iterations converge to attracting cycles. The space of degree $d$ hyperbolic polynomials is open in $\mathrm{Poly}_d$. Each of its components is a \emph{hyperbolic component in $\mathrm{Poly}_d$}.  Moreover, it descends to an open subspace in $\mathrm{poly}_d$ and the corresponding component is a \emph{hyperbolic component in $\mathrm{poly}_d$}. 

The goal of this paper is to introduce a natural metric on certain hyperbolic components in $\mathrm{poly}_d$. Using the measure-theoretic entropy of $P$ with respect to some equilibrium state, we construct a non-negative two-form on any hyperbolic component in $\mathrm{poly}_d$. We prove that this two-form is positive-definite on certain hyperbolic components. Most of our argument works also for hyperbolic components in the moduli space $\mathrm{rat}_d$ of degree $d$ rational maps with additional assumptions. As applications, we study critical points of the Hausdorff dimension function $\delta : \mathcal{H} \to (0,2)$, sending $[P]$ to the Hausdorff dimension of the Julia set of $P$, where $\mathcal{H}$ is a hyperbolic component in $\mathrm{poly}_d$.

Now we turn to the details of the above ingredients and the statement of results of this paper.

\subsection{Statement of results}
Denote by $\mathrm{Rat}_d$ the space of degree $d\ge 2$ rational maps and let $\widetilde{\mathcal{H}}$ be a hyperbolic component in $\mathrm{Rat}_d$. For $f\in \widetilde{\mathcal{H}}$, let $\Omega(J(f))$ be the space of positive $f$-invariant probability measure on the Julia set $J(f)$. 
A probability measure $\mu \in \Omega(J(f))$ is called a {\it primitive orbit measure} if it is uniformly supported on a periodic cycle in $J(f)$. If we equip $\Omega(J(f))$ with the weak-$*$ topology, the set of primitive orbit measures is dense in $\Omega(J(f))$ (see Proposition \ref{density}). Then there exists a unique continuous function $M_{f} : \Omega(J(f)) \to \bbR$ such that $$M_f(\mu) = \frac{1}{k}\log \big| (f^k)'(x) \big|$$ if $\mu\in \Omega(J(f))$ is a primitive orbit measure supported on a period $k$ periodic orbit $\hat x= \{x, \cdots, f^{k-1}(x) \}$. In fact, $M_f(\mu)$ is the Lyapunov exponent of $f$ with respect to $\mu$.

Since $\widetilde{\mathcal{H}}$ is a hyperbolic component, consider the natural holomorphic motion of the Julia sets. There exists a neighborhood $U(f)$ of the map $f$ in $\widetilde{\mathcal{H}}$ such that for all $g\in U(f)$, the motion induces a unique homeomorphism $\phi_g : J(f) \to J(g)$ conjugating the dynamics $f : J(f) \to J(f)$ to $g : J(g) \to J(g)$. We define a map $M : \Omega(J(f)) \times U(f) \to \bbR$ by
\begin{equation*} 
M(\mu, g) = M_{g} \left((\phi_g)_* \mu \right).
\end{equation*}
It turns out that the map $M$ is real analytic in the second coordinate. Moreover, regarded as a map from $\Omega(J(f))$ to $C^{\infty}(U(f), \bbR)$, the map $M$ is continuous, see Proposition \ref{thm_Manalytic}. Given any $\mu \in \Omega(J(f))$, we define the {\it multiplier function} associated to $\mu$
$$M_{\mu} : U(f) \to \bbR$$ by $M_{\mu}(g): = M(\mu, g)$. This map $M_{\mu}$ is harmonic and in particular it is real-analytic, also see Proposition \ref{thm_Manalytic}.

Moreover, we analogously define the map $\widetilde{M} :  \Omega(J(f))\times U(f) \to \bbC$. We show that the map $\widetilde{M}_{\mu} :U(f)\to \bbC$ is holomorphic and the map $\mu \mapsto \widetilde{M}_{\mu}$ is continuous with respect to the topology of uniform convergence on compact subsets. We call $\widetilde{M}_{\mu}$ the \emph{complex multiplier function} (see Section \ref{sec_cxmul}).

Consider the pressure zero and H\"older continuous function $-\delta(f)\log|f'| : J(f) \to \bbR$. Let $\nu$ be its {\it equilibrium state}. Equivalently, $\nu$ is the unique $f$-invariant probability measure in the class of Hausdorff $\delta$ dimensional measures. Then $\nu \in \Omega(J(f))$. Hence the multiplier function $M_{\nu}: U(f) \to \bbR$ is real-analytic. On the other hand, the Hausdorff dimension function $\delta :  \widetilde{\mathcal{H}}\to \bbR$, sending $f$ to the Hausdorff dimension of $J(f)$ is also real-analytic \cite{Bowen79}.

To construct our metric, we consider the entropy function $G_{f}:U(f) \to \bbR$ given by $$G_{f}(g) = \delta(g) M_{\nu}(g).$$
It turns out that $G_f(f)\le G_f(g)$, see Proposition \ref{prop_metric}.


Hence the Hessian of $G_{f}$ at $g = f\in U(f)$ is well-defined. It defines a symmetric bilinear form $||\cdot||_G$ on the tangent space $T_{f}\widetilde{\mathcal{H}}$ as follows. For $t \in (-1, 1)$, let $\tilde{c}(t)$ be a path in $U(f)$ with $\tilde{c}(0) = f$ and $\tilde{c}'(0) = \tilde{v} \in T_{f}\widetilde{\mathcal{H}}$.
Then $$||\tilde{v}||_G^2 = \frac{\partial^2 G_{f}}{\partial{\tilde{v}} \partial\tilde{v} } = \frac{d^2}{dt^2} \bigg|_{t=0} G_{f}(\tilde{c}(t)).$$
It descents a $2$-form $||\cdot||_G$ on the corresponding hyperbolic component in $\mathrm{rat}_d$, see Section \ref{sec:2-form}. 

Restricting our attention to hyperbolic components in $\mathrm{poly}_d$, we show that $||\cdot||_G$ is positive-definite on certain hyperbolic components. Recall that the \emph{central component $\mathcal{H}_0$} is the hyperbolic component in $\mathrm{poly}_d$ containing the affine conjugacy class of $z^d$ and the \emph{shift locus $\mathcal{S}_d$} is the hyperbolic component in $\mathrm{poly}_d$ such that for each element $[P]$, all critical points of $P$ are in the basin of $\infty$.
Our main theorem is the following.
\begin{thm}\label{main}
Let $\mathcal{H}$ be a hyperbolic component in $\mathrm{poly}_d$ such that $\mathcal{H}$ is neither $\mathcal{H}_0$ nor $\mathcal{S}_d$. Then  on $\mathcal{H}$,
the metric $||\cdot||_G$ is a Riemannian metric and is conformal equivalent to the standard  pressure metric.
\end{thm}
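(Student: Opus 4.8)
The plan is to identify the Hessian form $\|\cdot\|_G$ with an asymptotic variance, to read off the conformal factor against the pressure metric along the way, and then to reduce the whole statement to a single non-degeneracy property, whose proof is where the hypothesis on $\mathcal H$ enters.

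First I would reformulate $G_f$ thermodynamically. For $g\in U(f)$ set $\psi_g:=-\delta(g)\,\bigl(\log|g'|\circ\phi_g\bigr)$, a H\"older continuous function on $J(f)$. Since the topological pressure is a conjugacy invariant and $\delta(g)$ is by definition the zero of $s\mapsto P_f\bigl(-s\,(\log|g'|\circ\phi_g)\bigr)$, we have $P_f(\psi_g)=0$ for every $g$; moreover, $M_\nu(g)=M_g\bigl((\phi_g)_*\nu\bigr)=\int_{J(f)}(\log|g'|\circ\phi_g)\,d\nu$, so that
\begin{equation*}
G_f(g)=\delta(g)\,M_\nu(g)=-\int_{J(f)}\psi_g\,d\nu ,
\end{equation*}
where $\nu$ is the \emph{fixed} equilibrium state of $\psi_f=-\delta(f)\log|f'|$. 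By \cite{Bowen79} and Proposition \ref{thm_Manalytic} (together with real-analyticity of the holomorphic motion), $g\mapsto\psi_g$ is real-analytic into $C^{0,\alpha}(J(f))$. Along a path $c(t)$ in $U(f)$ with $c(0)=f$ and $c'(0)=v$, write $\psi_t:=\psi_{c(t)}$ and insert the classical second-order Taylor expansion of the pressure at $\psi_0$ (whose first and second directional derivatives are the $\nu$-integral and the asymptotic variance $\operatorname{Var}(\cdot,\nu)$ with respect to $\nu$):
\begin{equation*}
0=P_f(\psi_t)=t\!\int\!\dot\psi_0\,d\nu+\frac{t^{2}}{2}\Bigl(\int\!\ddot\psi_0\,d\nu+\operatorname{Var}(\dot\psi_0,\nu)\Bigr)+o(t^{2}).
\end{equation*}
Matching the coefficients of $t$ and $t^{2}$ gives $\int\dot\psi_0\,d\nu=0$ (so $f$ is a critical point of $G_f$, in agreement with Proposition \ref{prop_metric}) and $\int\ddot\psi_0\,d\nu=-\operatorname{Var}(\dot\psi_0,\nu)$; since $G_f(c(t))=-\int\psi_t\,d\nu$, differentiating twice yields
\begin{equation*}
\|v\|_G^{2}=-\int_{J(f)}\ddot\psi_0\,d\nu=\operatorname{Var}(\dot\psi_0,\nu)\ \ge\ 0 .
\end{equation*}

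Next, the conformal comparison. In exactly these terms the standard pressure metric is $\operatorname{Var}(\dot\psi_0,\nu)$ up to the normalizing factor $-\int\psi_0\,d\nu$, and $-\int\psi_0\,d\nu=\delta(f)M_\nu(f)=h_\nu(f)=G_f(f)$ is a positive, real-analytic function of $f$ on $\widetilde{\mathcal H}$ that descends to $\mathrm{rat}_d$, respectively $\mathrm{poly}_d$. Hence $\|\cdot\|_G^{2}=G_f(f)\,\|\cdot\|_P^{2}$ identically, so the two forms are conformally equivalent wherever they are defined, and $\|\cdot\|_G$ is a Riemannian metric on $\mathcal H$ precisely when the variance form $v\mapsto\operatorname{Var}(\dot\psi_0,\nu)$ is positive-definite on $T_{[f]}\mathcal H$ for every $[f]\in\mathcal H$ — equivalently, when its kernel on $T_f\widetilde{\mathcal H}$ is exactly the tangent space to the affine-conjugacy orbit, so that it descends to a genuine metric on $\mathrm{poly}_d$. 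This is the only thing left to prove. For the non-degeneracy I would use the Liv\v{s}ic theorem: since $J(f)$ is a compact expanding, topologically transitive set and $\dot\psi_0$ is H\"older, $\operatorname{Var}(\dot\psi_0,\nu)=0$ if and only if $\dot\psi_0$ is cohomologous to a constant; as it has $\nu$-mean zero this means $\dot\psi_0$ is a coboundary, i.e. $\sum_{x\in\hat x}\dot\psi_0(x)=0$ for every periodic cycle $\hat x\subset J(f)$. Writing $\dot\psi_0=-\dot\delta_0\log|f'|-\delta(f)\,\xi$ with $\xi=\tfrac{d}{dt}\big|_{0}\bigl(\log|c(t)'|\circ\phi_{c(t)}\bigr)$ and summing over a period-$k$ cycle $\hat x$ carrying the primitive orbit measure $\mu_{\hat x}$ (using $\sum_{\hat x}\log|f'|=kM_{\mu_{\hat x}}(f)$ and $\sum_{\hat x}\xi=k\,\tfrac{d}{dt}\big|_0 M_{\mu_{\hat x}}(c(t))$), the coboundary condition becomes
\begin{equation*}
\frac{d}{dt}\Big|_{t=0}\Bigl(\delta(c(t))\,M_{\mu_{\hat x}}(c(t))\Bigr)=0\qquad\text{for every periodic cycle }\hat x\subset J(f).
\end{equation*}
So $\|v\|_G=0$ if and only if the differentials at $[f]$ of the functions $[g]\mapsto\delta(g)M_{\mu_{\hat x}}(g)$ all annihilate $v$, and the remaining task is to show that for $\mathcal H\ne\mathcal H_0,\mathcal S_d$ their common kernel, together with $d\delta$, is exactly the orbit directions.

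The main obstacle is precisely this local rigidity. I would establish it by selecting, from the structure theory of polynomial hyperbolic components, a finite collection of repelling cycles in $J(f)$ whose multipliers — together with $\delta$ — have independent differentials of full rank on $T_{[f]}\mathcal H$; since by the holomorphic motion these cycles, their multipliers, and $\delta$ all vary real-analytically over $\mathcal H$, it then suffices to check the rank condition at one convenient representative, e.g. a post-critically finite center of $\mathcal H$. This verification works only when $J(f)$ is "rich enough", and it is exactly the two model components that must be forbidden: the shift locus $\mathcal S_d$, where $J(f)$ is a Cantor set on which the dynamics is the rigid one-sided $d$-shift, and the central component $\mathcal H_0$, where $J(f)$ is a quasicircle modeled on $z\mapsto z^{d}$ (for which $\log|f'|$ is even cohomologous to a constant on $J(f)$); for these two the variance form genuinely degenerates (or requires a separate argument), which is why Theorem \ref{main} excludes them. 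I expect the careful bookkeeping of the descent to $\mathrm{poly}_d$ — matching the kernel with the orbit tangent space — and the precise rigidity input to be the technical heart of the argument; by contrast the variance identity of the first step and the conformal comparison of the second are essentially formal once Proposition \ref{thm_Manalytic} and the real-analyticity of $\delta$ are in hand.
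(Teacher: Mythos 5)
Your first two steps reproduce the paper's argument faithfully: the identification $\|v\|_G^2=\operatorname{Var}(\dot\psi_0,\nu)=\int\ddot\psi_0\,d\nu$ together with the normalization $-\int\psi_0\,d\nu$ is exactly Proposition \ref{prop_conf}, and the Liv\v{s}ic/coboundary reduction is Corollary \ref{cor_G}. Your reformulation of the degeneracy condition is also correct: $\|v\|_G=0$ forces $\frac{d}{dt}\big|_{t=0}\log|\lambda_t|=K\log|\lambda_0|$ for \emph{every} repelling cycle, with the single constant $K=-\delta'(v)/\delta(f)$.

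The gap is in the final, and only genuinely hard, step. You propose to rule out this degeneracy by exhibiting finitely many repelling cycles whose multiplier differentials (together with $d\delta$) have full rank, verified at a postcritically finite center. This does not work for two reasons. First, full rank of finitely many analytic differentials at one point only gives full rank off a proper analytic subvariety, whereas the theorem asserts positive-definiteness at \emph{every} $[f]\in\mathcal H$; a pointwise check at the center cannot close this. Second, and more fundamentally, the obstruction is not a rank condition on finitely many multipliers: even if those differentials span $T^*_{[f]}\mathcal H$, one must exclude the possibility that \emph{all} cycles simultaneously satisfy the proportionality $\frac{d}{dt}\log|\lambda_t|=K\log|\lambda_0|$, which is a single linear relation per cycle with a common $K$, not a statement about a common kernel of finitely many functionals. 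The paper's Proposition \ref{K=0} handles exactly this: using the Oh--Winter prime orbit counting (Proposition \ref{prop_Oh} and Corollary \ref{cor_multiplier}, which require $\delta(f)>1$, supplied by Proposition \ref{prop_HDJ} precisely because $\mathcal H\neq\mathcal H_0,\mathcal S_d$) it manufactures multipliers of the form $e^{i\theta_{t,n}}\bigl(a_t^{n}+a_t^{n\kappa}b_t+o(a_t^{n\kappa}b_t)\bigr)$, feeds them into the $K$-relation, and concludes that every repelling multiplier of $f$ is real; by Eremenko--van Strien the Julia set then lies on a circle, contradicting $\delta(f)>1$. None of this machinery appears in your proposal, so the nondegeneracy claim is asserted rather than proved. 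A smaller inaccuracy: the exclusion of $\mathcal H_0$ is not because the variance form degenerates there (the paper proves positive-definiteness on all of $\mathcal H_0$ for $d=2$, and on $\mathcal H_0\setminus\{[z^d]\}$ for all $d$); it is excluded because $\delta=1$ at $[z^d]$, where the $\delta>1$ input to the counting argument fails.
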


For quadratic polynomials, we also obtain the above result for $\mathcal{H}_0$, see Theorem \ref{d=2}.
It would be interesting to study the properties of this metric further such as the K\"ahlerness, completeness, etc. On the other hand, in \cite{McMullen08}, McMullen introduced an analogue of the Weil-Petersson metric on the space of degree $d \ge 2$ Blaschke products. This space, via the Bers embedding, is isomorphic to $\mathcal{H}_0$ in $\mathrm{poly}_d$.  For $d=2$, it would also be interesting to investigate the relations between our metric and McMullen's metric.

\subsection{Applications to Hausdorff dimension}
This work originated from the works \cite{Bodart96} and \cite{He18}. In \cite{Bodart96}, Bodart and Zinsenister gave a numerical plot of the Hausdorff dimension function over the Mandelbrot set. In \cite{He18}, the first author obtained a numerical plot of the Hausdorff dimension one locus in the complement of the Mandelbrot set. This plots shows that the set of quadratic polynomials whose Julia sets have Hausdorff dimensions greater than one is star-like centered at $0$. These results might suggest some monotonicity properties of Hausdorff dimension. More results about Hausdorff dimensions for real quadratic polynomials are obtained in \cite{Douady97, Havard00, Jaksztas11, McMullen00, Ruelle82}.  
For algorithms which efficiently compute Hausdorff dimensions, we refer \cite{Oliver02, McMullen98}.

The above results motivated us to study the critical points of Hausdorff dimension function $\delta$. Our first result towards this direction concerns local maxima of $\delta$, which relates to Ransford's result in \cite{Ransford93}. Our method is more geometric compared to Ransford's analytic techniques.
\begin{thm} \label{thm_introapp1}
	Let $\mathcal{H}\subset\mathrm{poly}_d\setminus\mathcal{S}_d$ be a hyperbolic component. The Hausdorff dimension function $\delta: \mathcal{H} \to (0,2)$ has no local maximum on $\mathcal{H}$.
\end{thm}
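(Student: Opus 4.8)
The plan is to exploit the special structure of the entropy function $G_f$ established earlier — namely, that $G_f(f) \le G_f(g)$ for all $g \in U(f)$, so $f$ is a global minimum of $G_f$ on its neighborhood, and that $G_f(g) = \delta(g) M_\nu(g)$. The starting observation is that at $g = f$ the value of the multiplier function is exactly the Lyapunov exponent, $M_\nu(f) = M_f(\nu) = \int \log|f'|\, d\nu > 0$, which is strictly positive by hyperbolicity. Suppose, toward a contradiction, that $\delta$ has a local maximum at $[P] \in \mathcal{H}$; lift to a representative $f$. Then on a neighborhood $U(f)$ we have $\delta(g) \le \delta(f)$ for all $g$.

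**First I would** compare the two functions $G_f(g) = \delta(g)M_\nu(g)$ and $\delta(f) M_\nu(g)$ near $f$. Since $M_\nu$ is harmonic (hence real-analytic, from Proposition~\ref{thm_Manalytic}) and $M_\nu(f) > 0$, shrinking $U(f)$ if necessary we may assume $M_\nu(g) > 0$ throughout. Then the hypothesis $\delta(g) \le \delta(f)$ gives
\[
G_f(g) = \delta(g) M_\nu(g) \le \delta(f) M_\nu(g)
\]
for all $g \in U(f)$, with equality at $g = f$. Now I would invoke the minimum property $G_f(g) \ge G_f(f) = \delta(f) M_\nu(f)$. Combining,
\[
\delta(f) M_\nu(f) = G_f(f) \le G_f(g) \le \delta(f) M_\nu(g),
\]
so $M_\nu(g) \ge M_\nu(f)$ for every $g \in U(f)$: the harmonic function $M_\nu$ attains a local minimum at the interior point $f$. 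By the minimum principle for harmonic functions (applied on each complex line through $f$, or directly since $M_\nu$ is harmonic on the complex manifold), $M_\nu$ must be locally constant, equal to $M_\nu(f)$ on $U(f)$. But then $G_f(g) = \delta(g) M_\nu(f)$, and the inequality $G_f(g) \ge G_f(f)$ forces $\delta(g) \ge \delta(f)$ for all $g$, which together with the assumed local maximum $\delta(g) \le \delta(f)$ gives $\delta \equiv \delta(f)$ locally. The final step is to rule this out: the Hausdorff dimension function is non-constant on any hyperbolic component (it is real-analytic and, e.g., tends to the appropriate boundary behavior, or one invokes rigidity — a Julia set of a hyperbolic polynomial with locally constant dimension would be too special), contradicting the assumption and completing the proof.

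**The hard part will be** the very last step — excluding that $\delta$ is locally constant — together with making the harmonicity argument fully rigorous in the several-complex-variables setting. For the non-constancy, I expect to argue that if $\delta$ were constant on an open set, then by real-analyticity it would be constant on all of $\mathcal{H}$; but the boundary of a non-trivial hyperbolic component $\mathcal{H} \subset \mathrm{poly}_d \setminus \mathcal{S}_d$ contains parameters (e.g.\ with parabolic cycles, or Misiurewicz-type degenerations) near which the dimension is known to vary, or one compares with the escape-rate/Green's function behaviour. Alternatively, and perhaps more cleanly, the constancy of $M_\nu$ on $U(f)$ already forces a rigidity: the Lyapunov exponent with respect to the equilibrium state being constant under the holomorphic motion is highly restrictive, and should itself be incompatible with $\mathcal{H} \ne \mathcal{S}_d$. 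I would also need to confirm that the minimum principle applies to $M_\nu$ viewed as a pluriharmonic (or at least harmonic-on-lines) function on the parameter manifold, which follows from its description as the real part of the complex multiplier function $\widetilde{M}_\nu$ being holomorphic — so $M_\nu = \mathrm{Re}\,\widetilde{M}_\nu$ genuinely satisfies the open mapping / minimum principle unless $\widetilde{M}_\nu$ is constant. The remaining bookkeeping (shrinking neighborhoods, descending from $\widetilde{\mathcal{H}}$-representatives to $\mathcal{H}$) is routine.
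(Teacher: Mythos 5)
Your route is genuinely different from the paper's. The paper argues at the level of Hessians: at a critical point of $\delta$ it shows $\widetilde{M}_\nu'=0$, so $\widetilde{M}_\nu''$ is a complex bilinear form whose real part $M_\nu''$ has signature balanced by the complex structure ($\dim V_+=\dim V_-$); combining with the positive-definiteness of $(\delta M_\nu)''=\delta''M_\nu+\delta M_\nu''$ from Theorem \ref{main}, it concludes that $\delta''$ is positive-definite on a subspace of real dimension at least $d-1$, so no local maximum. You instead run a zeroth-order argument: a local max of $\delta$ plus the inequality $G_f(f)\le G_f(g)$ forces $M_\nu$ to have an interior local minimum, and since $M_\nu=\mathrm{Re}\,\widetilde M_\nu$ is pluriharmonic the minimum principle makes $M_\nu$, and then $\delta$, locally constant. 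Up to that point your chain of inequalities is correct (and $M_\nu(f)>0$ by expansion on $J(f)$, so the sign manipulations are legitimate), and it is arguably more elementary than the paper's signature computation.

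The genuine gap is your final step: you never actually exclude that $\delta$ is locally constant. The suggestions you offer (boundary behaviour near parabolic parameters, an unspecified ``rigidity'') are not established anywhere in the paper and are not routine — real-analyticity upgrades local constancy to constancy on all of $\mathcal{H}$, but proving $\delta$ is non-constant on an arbitrary hyperbolic component of the connectedness locus is essentially the content one is trying to prove, not a known input. The good news is that the gap closes immediately using the paper's own main theorem, which you did not invoke: if both $M_\nu$ and $\delta$ are constant near $f$, then $G_f=\delta M_\nu$ is constant near $f$, so its Hessian at $f$ vanishes, i.e.\ $\|v\|_G=0$ for every tangent vector $v$; this contradicts the positive-definiteness of $\|\cdot\|_G$ (Theorem \ref{main} for $\mathcal{H}\neq\mathcal{H}_0$, and the remark after Theorem \ref{d=2} for points of $\mathcal{H}_0$ other than $[z^d]$, with $[z^d]$ handled separately since it is a strict local minimum of $\delta$ by Zdunik's theorem). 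With that substitution — and the same case split on $\mathcal{H}_0$ that the paper performs — your argument is complete; note that Theorem \ref{main} is in any case indispensable, just as it is in the paper's proof.
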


Our next result relates critical points of $\delta$ to the multipliers of periodic cycles.
\begin{thm} \label{thm_introapp2}
	Let $\mathcal{H} \subset\mathrm{rat}_d$ be a hyperbolic component. Then $[f_0] \in\mathcal{H}$ is not a critical point of $\delta$ if 
	\begin{align*}
	\inf_{\{\hat{x}_n\}_{n \ge 1}} \displaystyle \liminf_{n \to \infty} \frac{1}{n} \left|\frac{D\lambda_{\hat{x}_n}([f_0])}{\lambda_{\hat{x}_n}([f_0])}\right| \neq 0
	\end{align*}
	where $\hat{x}_n$ is an $n$-cycle of $f_0$ and $\lambda_{\hat{x}_n}: \mathcal{H} \to \bbC$ is the function sending $[f]$ to the multiplier of $\phi_f(\hat{x}_n)$ where $f_0$ and $f$ are in the same hyperbolic component in $Rat_d$. The map $D\lambda_{\hat{x}_n}([f_0]): T_{[f_0]}\mathcal{H} \to \bbC$ is the differential of $\lambda_{\hat{x}_n}$. The infimum is taken over all the sequences of cycles $\hat{x}_n$.
\end{thm}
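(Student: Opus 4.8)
### Proof strategy for Theorem \ref{thm_introapp2}

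The plan is to contrapose: assuming $[f_0]$ is a critical point of $\delta$, I will show that the infimum in the statement must vanish. The starting point is the variational characterization of the entropy function. Recall from Proposition \ref{prop_metric} that $G_{f_0}(g) = \delta(g) M_\nu(g)$ attains its minimum at $g = f_0$, and that $G_{f_0}(f_0) = \delta(f_0)\cdot M_\nu(f_0)$. Moreover, by the Bowen-type formula the value $G_{f_0}(f_0)$ equals the measure-theoretic entropy $h_\nu(f_0) = \delta(f_0) M_\nu(f_0) = \log d$ (the topological entropy is constant on the hyperbolic component). So along any path $\tilde c(t)$ through $f_0$ we have $\delta(\tilde c(t)) M_\nu(\tilde c(t)) \ge \log d$, with equality at $t=0$; hence $\frac{d}{dt}\big|_0 \big(\delta M_\nu\big) = 0$. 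Writing this out, $D\delta([f_0])\cdot v \cdot M_\nu(f_0) + \delta(f_0)\, DM_\nu([f_0])\cdot v = 0$ for every tangent vector $v$. Therefore $D\delta([f_0]) = 0$ if and only if $DM_\nu([f_0]) = 0$ as a linear functional on $T_{[f_0]}\mathcal{H}$ — this reduces the problem to showing that the differential of the multiplier function $M_\nu$ does not vanish.

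Next I would express $DM_\nu([f_0])$ in terms of the cycle multipliers. Since primitive orbit measures are dense in $\Omega(J(f_0))$ (Proposition \ref{density}) and $\mu \mapsto M_\mu$ is continuous into $C^\infty(U(f_0),\bbR)$ with the differentials varying continuously (Proposition \ref{thm_Manalytic}), I can approximate $\nu$ weak-$*$ by a sequence $\mu_n$ of primitive orbit measures supported on cycles $\hat x_n$, and then $DM_{\mu_n}([f_0]) \to DM_\nu([f_0])$ in the appropriate sense. For a primitive orbit measure supported on the $n$-cycle $\hat x_n$, we have $M_{\mu_n}(g) = \frac{1}{n}\log|\lambda_{\hat x_n}([g])|$, so $DM_{\mu_n}([f_0])\cdot v = \frac{1}{n}\,\mathrm{Re}\!\left(\frac{D\lambda_{\hat x_n}([f_0])\cdot v}{\lambda_{\hat x_n}([f_0])}\right)$. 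Thus the quantity $\frac{1}{n}\left|\frac{D\lambda_{\hat x_n}([f_0])}{\lambda_{\hat x_n}([f_0])}\right|$ controls (up to a constant depending only on $\dim_{\bbC}\mathcal{H}$, via comparing the operator norm of a $\bbC$-linear functional and the sup-norm of the real part of a related expression — here one uses harmonicity of $M_\mu$, i.e. that $\widetilde M_\mu$ is holomorphic, so that $\mathrm{Re}\,\widetilde M_\mu$ and $\widetilde M_\mu$ have comparable derivatives) the operator norm of $DM_{\mu_n}([f_0])$.

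Putting these together: if $[f_0]$ is a critical point of $\delta$, then $DM_\nu([f_0]) = 0$, hence for the approximating cycles we get $DM_{\mu_n}([f_0]) \to 0$, and so $\liminf_n \frac{1}{n}\big|D\lambda_{\hat x_n}([f_0])/\lambda_{\hat x_n}([f_0])\big| = 0$ along this particular sequence; a fortiori the infimum over all sequences of cycles is zero. This is the contrapositive of the claimed implication, which finishes the proof. I expect the main technical obstacle to be the passage between the real multiplier function $M_\mu$ and the complex one $\widetilde M_\mu$, i.e. verifying that the vanishing of $DM_\nu$ (a real-linearity statement about a harmonic function) is genuinely captured by the complex-linear differentials $D\lambda_{\hat x_n}$, and making the approximation $DM_{\mu_n} \to DM_\nu$ uniform enough on a fixed small ball around $[f_0]$ to extract the derivative limit; both of these are handled by the real-analyticity and continuity statements in Proposition \ref{thm_Manalytic}, together with a Cauchy-estimate argument on a fixed polydisk chart.
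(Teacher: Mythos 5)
Your proposal is correct and follows essentially the same route as the paper: reduce criticality of $\delta$ to criticality of $M_{\nu}$ via the minimality of $G_{f_0}=\delta M_{\nu}$ (Proposition \ref{prop_metric}), approximate $\nu$ by primitive orbit measures supported on cycles, and pass the derivative to the limit using Propositions \ref{density} and \ref{thm_Manalytic} (the paper's Lemmas \ref{lem:not-crit} and \ref{lem_gnM}); you phrase it contrapositively while the paper argues the implication directly, but the content is identical. One harmless slip: $\delta(f_0)M_{\nu}(f_0)=h_{\nu}(f_0)$ by the pressure-zero condition, but this is not $\log d$ in general (that would be the measure of maximal entropy, not the equilibrium state of $-\delta(f_0)\log|f_0'|$) — this does not affect the argument, which only uses that $G_{f_0}$ attains its minimum at $f_0$.
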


Our work is vastly inspired by the work of Bridgeman \cite{Bridgeman10} and Bridgeman-Taylor \cite{Bridgeman08} which established in a similar fashion an extension of the Weil-Peterson metric to the quasi-Fuchsian space $QF(S)$ of a closed surface $S$ of genus at least $2$. In particular, if $\Gamma \in QF(S)$ and $\delta(\Gamma)$ is the Hausdorff dimension of the limit set of $\Gamma$, Bridgeman proved in \cite{Bridgeman10} that the Hausdorff dimension function $\delta : QF(S) \to \bbR$ does not admit any local maximum in $QF(S)$. Our sufficient condition may shed light on proving the (non)existence of critical points of $\delta$ in $QF(S)$.

\subsection{Organization of the paper} The paper is organized as follows. In Section \ref{sec_mul}, we prove some preparatory results regarding the Hausdorff dimension of Julia sets (Proposition \ref{prop_HDJ}), the invariant measures (Proposition \ref{density}) and some properties of multipliers (Proposition \ref{prop_Oh}). In Section \ref{sec_multipliers}, we study multiplier functions and their analytic properties. In section \ref{sec_therm}, we review thermodynamic formalism and the pressure metric on the moduli space. We introduce a non-negative metric in Section \ref{sec_twoform} and prove the main result Theorem \ref{main} in Section \ref{sec_Riem}. 
As applications, we prove Theorems \ref{thm_introapp1} and Theorem \ref{thm_introapp2} in Section \ref{sec_nolocmax}.

\subsection{Acknowledgements} We would like to thank Martin Bridgeman, Genadi Levin and Kevin Pilgrim for useful conversations. We also thank Fei Yang and Michel Zinsmeister for references \cite{Przytycki06} and \cite{Ransford93}, respectively.

\section{Complex dynamics background} \label{sec_mul}
In this section, we give an expository account for the basics in complex dynamics. We prove three results for later use regarding the Hausdorff dimension of Julia sets (Proposition \ref{prop_HDJ}), the invariant measures (Proposition \ref{density}) and some property of multipliers (Proposition \ref{prop_Oh}).

\subsection{Hausdorff dimension of Julia sets for hyperbolic maps}
Let $f\in\mathbb{C}(z)$ be a rational map of degree at least $2$. Denote by $F(f)$ and $J(f)$ the Fatou set and Julia set of $f$, respectively. Recall that $f$ is \emph{hyperbolic} if all the critical points under iterations converge to attracting cycles. In this subsection, we state some results about the Hausdorff dimension of $J(f)$.

The following result, due to Przytycki \cite{Przytycki06}, states the dimensions of boundaries of immediate attracting basins. Recall that for an $f$-invariant set $K$, its hyperbolic Hausdorff dimension is the supremum of the Hausdorff dimensions of $f$-invariant subsets $X$ of $K$ such that $f|_{X}$ is expending. 
\begin{prop}\cite[Theorem A]{Przytycki06}\label{prop:boundary-hd}
Let $f\in\mathbb{C}(z)$ be a rational map of degree at least $2$. Suppose $f$ is not a finite Blaschke product in some holomorphic coordinates or a quotient of a Blaschke product by a rational function of degree $2$. Assume $f$ has an attracting cycle and denote $B$ its the immediate basin. If each component of $B$ is simply connected, then the hyperbolic Hausdorff dimension of $\partial B$ is larger than $1$.
\end{prop}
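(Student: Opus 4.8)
The plan follows Przytycki \cite{Przytycki06}; I sketch the steps. One passes to an iterate, uniformizes a fixed Fatou component of $B$ by the disk, recognizes $\partial B$ as an expanding repeller whose dimension is given by Bowen's formula, and derives the strict inequality from a rigidity property of harmonic measure.

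\emph{Reduction and uniformization.} Let $p$ be the period of the attracting cycle. Replacing $f$ by $g:=f^{p}$ affects neither $\partial B$ nor $\mathrm{HD}_{\mathrm{hyp}}(\partial B)$, and $f$ is a finite Blaschke product (resp.\ the excluded degree-$2$ quotient) in holomorphic coordinates exactly when $g$ is, so we may assume $g$ fixes a component $U\subseteq B$ containing the attracting fixed point. Then $g|_{U}\colon U\to U$ is proper of degree $k\ge 2$, since the cycle of Fatou components carrying the attracting orbit contains a critical point. Choose a Riemann map $\psi\colon\bbD\to U$ with $\psi(0)$ the fixed point; then $h:=\psi^{-1}\circ g\circ\psi$ is a degree-$k$ finite Blaschke product fixing $0$, and normalized Lebesgue measure $m$ on $\partial\bbD$ is $h$-invariant and ergodic. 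By Fatou's theorem the radial limit $\psi^{*}$ exists $m$-a.e., takes values in $\partial U$, is $m$-a.e.\ injective, and semiconjugates $h|_{\partial\bbD}$ to $g|_{\partial U}$; put $\omega:=\psi^{*}_{*}m$, the harmonic measure on $\partial U$.

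\emph{Bowen's formula and the lower bound $\ge 1$.} Because $f$ is hyperbolic, $J(f)$ is uniformly expanding and $\partial B\subseteq J(f)$ is closed and $g$-invariant, hence itself expanding, so $\mathrm{HD}_{\mathrm{hyp}}(\partial B)=\mathrm{HD}(\partial B)$; moreover $\mathrm{HD}(\partial B)=\mathrm{HD}(\partial U)$ since the boundary pieces $\partial U_{0},\dots,\partial U_{p-1}$ of $B$ are mapped onto one another by $f$ via Lipschitz maps with Lipschitz inverse branches. By Bowen's formula $\mathrm{HD}(\partial U)$ is the unique zero $t_{*}$ of the strictly decreasing, eventually negative function $t\mapsto P(t):=P_{g|_{\partial U}}(-t\log|g'|)$, so the assertion is equivalent to $P(1)>0$. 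Here $\omega$ is $g$-invariant and ergodic (push-forward of the ergodic $m$ by the factor map $\psi^{*}$), and by Makarov's theorem harmonic measure of a simply connected domain has Hausdorff dimension exactly $1$; since $g|_{\partial U}$ is expanding, the Ma\~n\'e--Ruelle formula yields $h_{\omega}(g)=\chi_{\omega}:=\int\log|g'|\,d\omega$, so that $P(1)\ge h_{\omega}(g)-\chi_{\omega}=0$ by the variational principle.

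\emph{Strictness and the main obstacle.} Suppose $P(1)=0$. Then $\omega$ attains the variational supremum for the H\"older potential $-\log|g'|$ on the expanding set $\partial U$, so by uniqueness of the equilibrium state $\omega$ is the $1$-conformal (``geometric'') measure of $\partial U$. The crux is then the rigidity statement that harmonic measure of $U$ can coincide with the geometric measure only when $g|_{U}$ is holomorphically conjugate to $z\mapsto z^{k}$; carrying such a conjugacy along the grand orbit of $U$ forces $f$ to be, in suitable holomorphic coordinates, a finite Blaschke product, or a degree-$2$ quotient of one — precisely the excluded maps. This contradiction gives $P(1)>0$, hence $\mathrm{HD}_{\mathrm{hyp}}(\partial B)=t_{*}>1$. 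I expect this rigidity step to be the genuine difficulty: one must promote ``$\omega$ has a H\"older Jacobian with respect to the geometric measure'' to ``$\partial U$ is a real-analytic curve,'' via Makarov-type distortion bounds and the law of the iterated logarithm for $\log|\psi'|$ together with the real-analytic expansion of $g$ along $\partial U$, and then show that a real-analytic $g$-invariant boundary curve of an attracting basin is a round circle; equivalently, one appeals to the Przytycki--Urba\'nski--Zdunik dichotomy, for which equality of harmonic and geometric measure is exactly the exceptional case. Everything else — the reduction to an iterate, identifying $\partial B$ with an expanding repeller, Bowen's formula, Makarov's theorem, and the Ma\~n\'e--Ruelle formula — is routine.
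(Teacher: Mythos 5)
You should first note that the paper does not prove this statement at all: it is imported verbatim as Theorem A of Przytycki \cite{Przytycki06}, so there is no in-paper argument to compare against, and a full proof is genuinely a paper's worth of work. Judged on its own terms, your sketch does follow the correct strategy (the Zdunik--Przytycki line: uniformize the invariant component, transport Lebesgue measure to harmonic measure $\omega$, invoke Makarov's theorem that $\mathrm{HD}(\omega)=1$ together with the Ma\~n\'e dimension formula to get $P(1)\ge 0$, and rule out equality by the rigidity dichotomy for harmonic versus geometric measure). The reduction to an iterate, the degree count for $g|_U$, and the identification of the excluded maps as exactly the rigid cases are all in the right place.

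There are, however, two real gaps. The more serious one is that your Bowen's-formula step begins ``Because $f$ is hyperbolic, $J(f)$ is uniformly expanding and $\partial B$ is itself expanding'' --- but hyperbolicity is not a hypothesis of the proposition. For a general rational map with an attracting cycle, $\partial B$ can contain critical points, parabolic points and non-expanding behaviour; this is precisely why the statement is phrased in terms of the \emph{hyperbolic} Hausdorff dimension (the supremum over invariant expanding subsets) rather than $\mathrm{HD}(\partial B)$, and why Przytycki's actual proof replaces the pressure $P(t)$ on $\partial U$ by a hyperbolic pressure realized on expanding Cantor repellers constructed inside $\partial U$ (the ``$K(V)$''/pullback construction), showing that already some such repeller has dimension exceeding $1$. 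Your argument is essentially complete only in the hyperbolic case --- which, to be fair, is the only case the paper uses, via Proposition \ref{prop_HDJ}. The second gap is the one you flag yourself: the strictness step, i.e.\ that coincidence of harmonic measure with the $1$-conformal measure forces $\partial U$ to be real-analytic and $f$ to be a Blaschke product or its degree-$2$ quotient, is only named (LIL for $\log|\psi'|$, the Przytycki--Urba\'nski--Zdunik dichotomy) rather than carried out; since this is the heart of the theorem, the proposal should be regarded as a correct roadmap in the hyperbolic setting rather than a proof of the cited statement.
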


If $P\in\mathbb{C}[z]$ is a polynomial of degree at least $2$, the Julia set $J(P)$ is the boundary of the basin $B_\infty(P)$ of $\infty$. 
If $J(P)$ is connected, equivalently all the critical points of $P$ are away from $B_\infty(P)$, Zdunik's result \cite{Zdunik90} 
implies that the Hausdorff dimension of $J(P)$ is larger than $1$ unless $P$ is conjugate to a monomial or Chebyshev polynomial. Indeed,  in this case the Hausdorff dimension of the measure of maximal entropy for $P$ is $1$ \cite{Makarov85}.

For a hyperbolic polynomial $P$, if $J(P)$ is not a Cantor set, equivalently not all critical points are contained in $B_\infty(P)$, then the Hausdorff dimension of $J(P)$ larger than $1$ unless $P$ is conjugate to a monomial:

\begin{prop} \label{prop_HDJ}
Let $P$ be a hyperbolic polynomial of degree at least $2$. Suppose $P$ is not conjugate to a monomial. If $J(P)$ is not a Cantor set, then the Hausdorff dimension of $J(P)$ is larger than $1$.
\end{prop}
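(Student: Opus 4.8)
The plan is to reduce the statement to the results already quoted—Przytycki's Proposition \ref{prop:boundary-hd} for the boundary of immediate attracting basins, together with Zdunik's theorem on the dimension of the Julia set when $J(P)$ is connected. First I would treat the dichotomy according to whether $J(P)$ is connected or not. If $J(P)$ is connected, then all critical points of $P$ lie in the filled Julia set, so $B_\infty(P)$ contains no critical point; by Zdunik's theorem (as recalled in the paragraph preceding the statement), the Hausdorff dimension of $J(P)$ exceeds $1$ unless $P$ is conjugate to a monomial or to a Chebyshev polynomial. Since a Chebyshev polynomial has $J(P)$ an interval, which has Hausdorff dimension exactly $1$, and the hypothesis excludes only monomials, I need to separately rule out the Chebyshev case: but a (scaled) Chebyshev polynomial is not hyperbolic in the relevant sense—actually one must be careful here, so I would instead note that for a hyperbolic $P$ the Julia set cannot be an interval (a hyperbolic map with interval Julia set would be, up to conjugacy, $\pm T_d$, whose critical points are not all attracted to attracting cycles in the required way; in fact $\pm T_d$ has critical points on the Julia set or mapping to repelling behavior), hence the Chebyshev alternative does not occur for hyperbolic $P$, and we conclude $\dim_H J(P) > 1$.

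Next I would handle the case where $J(P)$ is not connected but not a Cantor set. In that situation $P$ has at least one critical point in $B_\infty(P)$ and at least one critical point not in $B_\infty(P)$; the latter is attracted to some attracting cycle in $\mathbb{C}$ (using hyperbolicity), so $P$ has a bounded attracting cycle with immediate basin $B \subset \mathbb{C}$. The key point is to verify the hypotheses of Proposition \ref{prop:boundary-hd}: that $P$, in suitable coordinates, is not a finite Blaschke product nor a degree-$2$ quotient of one, and that each component of $B$ is simply connected. A polynomial of degree $d \ge 2$ is never a Blaschke product or such a quotient (those are proper self-maps of a disk or have two critical values with specific structure, whereas $\infty$ is a superattracting fixed point of $P$ of local degree $d$, which a Blaschke product on the disk cannot have); this rules out the exceptional maps. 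For simple connectivity of the components of $B$: since $J(P)$ is not a Cantor set, the filled Julia set has a connected component that is not a point, hence (being full, i.e., with connected complement in $\widehat{\mathbb{C}}$) is simply connected, and the immediate basin components, being Fatou components of a polynomial contained in the filled Julia set, inherit simple connectivity—here I would invoke the standard fact that bounded Fatou components of a polynomial are simply connected precisely when the Julia set is connected, so more care is needed; instead I would argue that we may pass to a suitable polynomial-like restriction or use that $\partial B \subset J(P)$ and apply Przytycki's theorem to the union over the cycle, noting that hyperbolic Hausdorff dimension of $\partial B$ being $>1$ forces $\dim_H J(P) \ge \operatorname{HypHD}(\partial B) > 1$ because $\partial B \subset J(P)$ and hyperbolic Hausdorff dimension is a lower bound for Hausdorff dimension.

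I expect the main obstacle to be the simple-connectivity hypothesis in Proposition \ref{prop:boundary-hd} when $J(P)$ is disconnected: a disconnected Julia set of a polynomial has infinitely-connected bounded Fatou components in general, so one cannot apply Przytycki's result directly to the whole map $P$. The fix—which I would carry out carefully—is to restrict $P$ to a polynomial-like map near the bounded attracting cycle: choosing a critical point $c$ not in $B_\infty(P)$, its grand orbit together with a neighborhood yields a polynomial-like map $(P^k, U', U)$ whose filled Julia set is the component of the filled Julia set of $P$ containing $c$, and this polynomial-like map is hybrid-equivalent (by the Douady--Hubbard straightening theorem) to an actual polynomial $Q$ of lower degree whose Julia set is connected and embeds quasiconformally into $J(P)$; applying the connected case (Zdunik) to $Q$, or Przytycki's theorem where the simple-connectivity now holds, gives $\dim_H J(Q) > 1$, and since quasiconformal maps preserve the property $\dim_H > 1$ (they are locally Hölder with Hölder exponents bounded away from making dimension drop to $1$—more precisely one uses that $\dim_H = 1$ is not preserved but $\dim_H > 1$ for the whole invariant set transfers via the quasiconformal conjugacy which has a dimension distortion bound), we conclude $\dim_H J(P) \ge \dim_H($copy of $J(Q)) > 1$. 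The degenerate sub-case to check is when $Q$ itself is a monomial or Chebyshev: a polynomial-like restriction straightening to $z\mapsto z^{d'}$ would mean $c$ is a superattracting periodic point, but the only attracting cycles here contain no critical point of $P$ other than possibly $c$ periodic—this case corresponds to $c$ being periodic, which is consistent with hyperbolicity, and then the relevant bounded Fatou component is a superattracting basin; for such a component Przytycki's theorem still applies (the excluded maps are global Blaschke-type, not local superattracting behavior), giving $\operatorname{HypHD}(\partial B) > 1$ unless the straightened model is $z \mapsto z^{d'}$ itself with $\partial B$ a circle—but a round circle inside $J(P)$ can only occur if $P$ is itself conjugate to a monomial, contradicting the hypothesis. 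Assembling these cases yields $\dim_H J(P) > 1$ in all situations.
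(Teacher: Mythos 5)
Your connected-case argument (Zdunik plus ruling out Chebyshev by non-hyperbolicity) is fine, but the disconnected case goes astray at two points, and the ``main obstacle'' you identify is not actually there. First, the obstacle: every \emph{bounded} Fatou component of a polynomial is simply connected, whether or not $J(P)$ is connected --- if $\gamma$ is a Jordan curve in a bounded component $U$, the iterates $P^n$ stay in the (bounded) filled Julia set on $\gamma$, hence by the maximum principle are uniformly bounded on the region enclosed by $\gamma$, which therefore lies in the Fatou set and hence in $U$. So the simple-connectivity hypothesis of Proposition \ref{prop:boundary-hd} holds directly for the immediate basin of the bounded attracting cycle, and this is exactly what the paper does: it applies Przytycki's theorem to $\partial B\subset J(P)$ with no detour. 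Your fallback route through a polynomial-like restriction and straightening is not only unnecessary but contains a genuinely false step: a hybrid equivalence is only quasiconformal, and quasiconformal maps do \emph{not} preserve the property $\dim_H>1$ (by Astala's distortion bounds a $K$-quasiconformal image of a set of dimension slightly above $1$ can have dimension below $1$ for large $K$), so ``$\dim_H J(Q)>1$ transfers to the copy of $J(Q)$ inside $J(P)$'' cannot be justified the way you suggest.

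Second, your dismissal of the exceptional maps in Przytycki's theorem is wrong: polynomials \emph{can} be quotients of Blaschke products by degree-$2$ rational maps --- the Chebyshev polynomials $\pm T_d$ arise as quotients of $z^d$ (a Blaschke product) under $z\mapsto z+1/z$ --- so you cannot wave this case away by saying ``a polynomial is never such a quotient.'' The paper spends the second half of its proof precisely on this case: it argues that if $P$ is such a quotient then $\deg P=2$, so (since $J(P)$ is not Cantor) $J(P)$ is connected, and Zdunik's theorem together with the non-monomial hypothesis finishes the argument. In your framework you could instead close this gap by showing the exceptional polynomials are never hyperbolic, but as written the step is an unjustified (and false) assertion. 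To repair your proof: delete the polynomial-like detour, prove simple connectivity of bounded Fatou components by the maximum principle, apply Proposition \ref{prop:boundary-hd} directly, and give an actual argument for the Blaschke-quotient case.
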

\begin{proof}
Since $P$ is hyperbolic and has a critical point not in the basin of $\infty$, it follows that $P$ has an attracting cycle in $\mathbb{C}$. Then the immediate basin of the attracting cycle is a union of finitely many simply connected components. Note the boundary of this immediate basin is contained in $J(P)$. Thus by Proposition \ref{prop:boundary-hd}, we only need to deal with the case that $P$ is a quotient of a Blaschke product by a rational function of degree $2$. In this case, we claim that $P$ has degree $2$. Indeed, for otherwise, the product of $P$ and a rational maps of degree $2$ is a Blaschke product. It is impossible. Since the Julia set of quadratic polynomial is either connected or a Cantor set, $J(P)$ is connected. Since $P$ is not conjugate to a monomial, it follows that the Hausdorff dimension of $J(P)$ is larger than $1$.
\end{proof}

Recall that $\mathrm{poly}_d$ is the moduli space of degree $d$ polynomials. Since the Hausdorff dimension of Julia sets are invariant under M\"obius conjugacy, the function $\delta:\mathrm{poly}_d\to(0,2)$, sending $[P]$ to the Hausdorff dimension of $J(P)$, is well-defined. The above proposition immediately implies the following property of $\delta$.

\begin{cor}
For $d\ge 2$, let $\mathcal{H}\subset\mathrm{poly}_d$ be a hyperbolic component. Suppose $\mathcal{H}$ is not in the shift locus. Then $\delta(\mathcal{H}-\{[z^d]\})\subset(1,2)$.
\end{cor}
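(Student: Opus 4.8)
The plan is to apply Proposition~\ref{prop_HDJ} pointwise on $\mathcal{H}$. Since it has already been recorded that $\delta:\mathrm{poly}_d\to(0,2)$ is well-defined, it suffices to show $\delta([P])>1$ for every $[P]\in\mathcal{H}\setminus\{[z^d]\}$. So I would fix such a class and a hyperbolic representative $P$ of degree $d\ge 2$, and check the two hypotheses of Proposition~\ref{prop_HDJ} for $P$.

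The first hypothesis, that $P$ is not conjugate to a monomial, is immediate: up to affine conjugacy the only degree-$d$ monomial is $z^d$ (a suitable scaling normalizes the leading coefficient of $\lambda z^d$), so ``conjugate to a monomial'' means precisely $[P]=[z^d]$, which we have excluded. For the second hypothesis, that $J(P)$ is not a Cantor set, I would use the equivalence recalled above: for a hyperbolic polynomial, $J(P)$ is a Cantor set if and only if every critical point of $P$ lies in the basin $B_\infty(P)$ of $\infty$. On the hyperbolic locus each critical point either escapes to $\infty$ or converges to an attracting cycle in $\mathbb{C}$, and both conditions are open, so the number of critical points (counted with multiplicity) lying in $B_\infty$ is locally constant; hence the set of classes with Cantor Julia set is a union of hyperbolic components, namely the component $\mathcal{S}_d$. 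Since $\mathcal{H}$ is a hyperbolic component distinct from $\mathcal{S}_d$, the two are disjoint, so $[P]\notin\mathcal{S}_d$ and $J(P)$ is not a Cantor set.

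With both hypotheses verified, Proposition~\ref{prop_HDJ} gives $\delta([P])=\dim_H J(P)>1$, and since the class was arbitrary and $\delta<2$ always, we conclude $\delta(\mathcal{H}\setminus\{[z^d]\})\subset(1,2)$. Essentially everything here is bookkeeping around Proposition~\ref{prop_HDJ}; the one point that deserves care is the identification of $\{[P]:J(P)\text{ is a Cantor set}\}$ with the single hyperbolic component $\mathcal{S}_d$, which rests on the local constancy of the escape behaviour of the critical points on a hyperbolic component (equivalently, on the connectedness of the shift locus).
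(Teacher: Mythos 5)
Your proposal is correct and follows exactly the route the paper intends: the paper derives this corollary as an immediate consequence of Proposition~\ref{prop_HDJ}, using the stated equivalence that a hyperbolic polynomial has Cantor Julia set precisely when all critical points lie in $B_\infty(P)$, so that $\mathcal{H}\cap\mathcal{S}_d=\emptyset$ rules out the Cantor case and $[P]\neq[z^d]$ rules out the monomial case. Your extra care about the shift locus being a union of hyperbolic components is sound bookkeeping but is already built into the paper's definition of $\mathcal{S}_d$ as a single hyperbolic component.
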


\subsection{Primitive orbit measures}
Let $f\in\mathbb{C}(z)$ be a hyperbolic rational map of degree $d\ge 2$. A {\it primitive orbit measure} $\mu$ associated to $f$ is a probability measure uniformly supported on a periodic cycle $\hat x:=\{x, f(x),\cdots,f^{k-1}(x)\}$ in $J(f)$. Then for any continuous function $\phi$ defined on $J(f)$, we have  
$$\int_{J(f)} \phi d\mu = \frac{1}{k}\sum_{i=0}^{k-1}\phi(f^i(x)).$$
Denote by $\Omega(J(f))$ the space of $f$-invariant probability measures on $J(f)$. The next proposition states that the set of primitive orbit measures is dense in $\Omega(J(f))$ with respect to the weak-$*$ topology. This is because the dynamical system $(J(f),f)$ satisfies the {\it specification property}. Recall that for a compact metric space $X$ and a continuous transformation $T: X \to X$, we say that the topological dynamical system $(X,T)$ satisfies the \emph{specification property} if for any $\epsilon>0$, there exists an integer $N(\epsilon)$ such that for any $x_1,x_2\in X$ and two sets of consecutive positive integers $A_1=\{a_1,a_1+1,\cdots,b_1\}$ and $A_2=\{a_2,a_2+1,\cdots,b_2\}$ with $a_2-b_1>N(\epsilon)$, and for any integer $q>b_2-a_1+N(\epsilon)$, there exists a $q$-periodic point $x\in X$ of $T$ such that $\rho(T^jx,T^jx_1)<\epsilon$ for $j\in A_1$ and $\rho(T^kx,T^kx_1)<\epsilon$ for $k\in A_2$. See \cite{Sigmund74} for details.

\begin{prop}\label{density}
	If $f$ is a hyperbolic rational map of degree $d\ge 2$, the set of primitive orbit measures is dense in $\Omega(J(f))$.
\end{prop}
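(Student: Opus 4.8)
The plan is to deduce Proposition \ref{density} from the specification property together with a standard density argument for invariant measures of dynamical systems satisfying specification. First I would recall that since $f$ is hyperbolic, the restriction $f|_{J(f)}$ is an expanding (in particular topologically mixing on pieces of a Markov partition) system, and hence $(J(f), f)$ satisfies the specification property as stated in the excerpt, citing \cite{Sigmund74}. The goal is then purely ergodic-theoretic: show that for a topological dynamical system $(X,T)$ with the specification property, the measures uniformly supported on periodic orbits are weak-$*$ dense in $\Omega(X) = \mathcal{M}_T(X)$, the space of $T$-invariant Borel probability measures.

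The key steps would be as follows. Since $\Omega(J(f))$ is weak-$*$ compact and metrizable, it suffices to approximate an arbitrary $\mu \in \Omega(J(f))$ by primitive orbit measures. Because ergodic measures are the extreme points of $\Omega(J(f))$, and finite convex combinations of ergodic measures are dense (by Choquet/ergodic decomposition plus a standard approximation), it suffices to approximate (i) a single ergodic measure, and then (ii) to handle convex combinations by concatenating orbit segments. For step (i), fix an ergodic $\mu$, a finite collection of continuous test functions $\varphi_1, \dots, \varphi_m$, and $\epsilon > 0$; by the Birkhoff ergodic theorem choose a point $x$ and a long time $n$ so that the Birkhoff averages $\frac1n \sum_{j=0}^{n-1} \varphi_i(T^j x)$ are within $\epsilon/2$ of $\int \varphi_i \, d\mu$ for all $i$. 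Apply the specification property to the single orbit segment $\{x, Tx, \dots, T^{n-1}x\}$ to find a periodic point $p$ of period $q \approx n$ that $\epsilon'$-shadows this segment, where $\epsilon'$ is chosen using uniform continuity of the $\varphi_i$ so that shadowing transfers to closeness of Birkhoff sums; the primitive orbit measure $\mu_p$ on $\hat p$ then satisfies $|\int \varphi_i \, d\mu_p - \int \varphi_i \, d\mu| < \epsilon$. For step (ii), given $\mu = \sum_{k=1}^{r} t_k \nu_k$ with $\nu_k$ ergodic and $t_k$ rational (approximating arbitrary weights), pick orbit segments typical for each $\nu_k$ of lengths proportional to $t_k$, and use specification to splice them into one long periodic orbit whose orbit measure approximates $\mu$; the gaps of length $N(\epsilon')$ between the glued segments contribute a vanishing fraction as the total length grows.

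The main obstacle I expect is the bookkeeping in step (ii): one must control simultaneously (a) the shadowing error from specification, (b) the ergodic-averaging error on each segment, and (c) the error coming from the bounded-length transition gaps between spliced segments, and show all three can be made uniformly small on a fixed finite set of test functions. None of these is deep, but the estimates must be arranged so that the period $q$ of the resulting periodic point — which specification only pins down up to the freedom $q > b_2 - a_1 + N(\epsilon)$ — can be taken with the segment-length proportions essentially preserved. An alternative, slicker route that avoids the explicit splicing is to invoke a known theorem (e.g. from \cite{Sigmund74}) that for systems with specification the periodic orbit measures are dense in $\mathcal{M}_T(X)$; if we are content to cite this, the proof reduces to the one-line observation that hyperbolicity of $f$ gives specification for $(J(f),f)$, and the primitive orbit measures are exactly the periodic orbit measures. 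I would present the argument in that streamlined form, recording the specification-to-density implication as the cited input and emphasizing only the verification that $(J(f),f)$ has specification.
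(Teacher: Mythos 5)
Your streamlined version — verify that hyperbolicity makes $f|_{J(f)}$ expanding, deduce the specification property, and then cite Sigmund's theorem that specification implies density of periodic orbit measures — is exactly the proof the paper gives (the paper routes the specification claim through a semi-conjugacy to a mixing subshift of finite type, citing Ruelle and Sigmund). Your optional self-contained sketch of the Sigmund argument via ergodic decomposition, Birkhoff averaging, and orbit splicing is a correct outline of the cited input but is not needed; the approach is the same.
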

\begin{proof}
If $f\in\mathbb{C}(z)$ is hyperbolic, then $f$ is expanding in a neighborhood of $J(f)$ \cite[Theorem 3.13]{McMullen94}. Hence the dynamical system $(J(f),f)$ is semi-conjugate to a mixing subshift of finite type \cite{Ruelle89}. It follows that $(J(f),f)$ satisfies the specification property \cite[Page 287 and Proposition 1]{Sigmund74}. By \cite[Theorem 1]{Sigmund74}, the set of primitive orbit measures is dense in $\Omega(J(P))$.
\end{proof}

\subsection{Distribution of multipliers}
Let $f : \widehat{\mathbb{C}}\to\widehat{\mathbb{C}}$ be a hyperbolic rational map. Given any primitive periodic orbit $\hat{x} = \{x, f(x), \cdots, f^{n-1}(x) \}$ on the Julia set $J(f)$ of period $n$, we consider its  {\it multiplier} $\lambda(\hat{x})$ given by, in the local coordinates, 
$$\lambda(\hat{x}) = (f^n)'(x).$$
Let $\mathcal{O}$ denote the set of all primitive periodic orbits of $f$ in $J(f)$. For $T>0$, consider the counting function $$N_T(\mathcal{O}) \defeq  \# \{\hat{x} \in \mathcal{O} ~|~ |\lambda(\hat{x})|<T  \}.$$
Since $f$ is hyperbolic, $N_T(\mathcal{O})$ is finite for any $T > 0$.
In \cite{Oh17}, Oh and Winter proved the following asymptotics for $N_T(\mathcal{O}) $.

\begin{thm}\cite[Theorem 1.1 (1)]{Oh17}
	Let $f:\widehat{\mathbb{C}}\to\widehat{\mathbb{C}}$ be a hyperbolic rational map of degree at least $2$. Suppose that $f$ is not monomial. Then there exists $\epsilon>0$ such that 
	$$N_T(\mathcal{O})=Li(T^\delta)+O(T^{\delta-\epsilon})$$
	where $Li(t) = \displaystyle\int_{2}^{t} \frac{dt}{\log t}$ is the offset logarithmic integral and $\delta$ is the Hausdorff dimension of $J(f)$.
\end{thm}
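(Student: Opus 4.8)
The plan is to reduce the counting of periodic orbits by multiplier to the analytic behaviour of a dynamical zeta function, and then to extract the asymptotic with its error term by a Tauberian/contour argument. Since $f$ is hyperbolic it is expanding on a neighbourhood of $J(f)$ and, as recalled above, $(J(f),f)$ is semiconjugate through a finite Markov coding to a mixing subshift of finite type $(\Sigma_A,\sigma)$, under which the potential $\varphi=\log|f'|$ becomes a Hölder continuous, strictly positive function on $\Sigma_A$. I would introduce
\[
\zeta_f(s)=\prod_{\hat x\in\mathcal{O}}\bigl(1-|\lambda(\hat x)|^{-s}\bigr)^{-1},
\]
so that $\log\zeta_f(s)=\sum_{n\ge 1}\tfrac1n\sum_{\sigma^n x=x}e^{-sS_n\varphi(x)}$, where $S_n\varphi=\sum_{i=0}^{n-1}\varphi\circ\sigma^i$ and $e^{S_n\varphi(x)}=|\lambda(\hat x)|$ on an $n$-periodic point; thus controlling $\zeta_f$ controls weighted orbit sums. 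The key structural input is that, $f$ being rational, its inverse branches are holomorphic, so the Ruelle transfer operators $\mathcal{L}_{-s\varphi}$ act as trace-class operators on suitable spaces of holomorphic functions attached to the coding pieces; by Ruelle's theory of zeta functions of real-analytic expanding maps, $\zeta_f$ then extends meromorphically to all of $\mathbb{C}$, with zeros and poles located exactly where $1$ lies in the spectrum of $\mathcal{L}_{-s\varphi}$.

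Next I would pin down the leading singularity. By thermodynamic formalism the leading eigenvalue of $\mathcal{L}_{-s\varphi}$ is $e^{P(-s\varphi)}$, where $s\mapsto P(-s\varphi)$ is real-analytic and, since $\varphi>0$, strictly decreasing on $\mathbb{R}$; by Bowen's formula its unique zero is $s=\delta$, the Hausdorff dimension of $J(f)$, and $\tfrac{d}{ds}P(-s\varphi)|_{s=\delta}\ne 0$ shows this eigenvalue crosses $1$ transversally. Hence $\zeta_f$ has a simple pole at $s=\delta$ with nonzero residue, and is holomorphic and nonvanishing on $\{\Re s\ge\delta\}\setminus\{\delta\}$ — and slightly past $\Re s=\delta$ away from the real axis — provided the twisted operators $\mathcal{L}_{-(\delta+it)\varphi}$ with $t\ne 0$ have spectral radius strictly less than $1$. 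This is where the hypothesis that $f$ is not a monomial enters: a rigidity argument in the style of Zdunik shows that $\varphi$ fails to be cohomologous over $\Sigma_A$ to a function valued in a discrete subgroup of $\mathbb{R}$ unless $f$ is conjugate to $z^{\pm d}$, so the lattice obstruction to the strict spectral-radius bound on the critical line is absent.

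To upgrade $o(T^\delta)$ to the power-saving remainder $O(T^{\delta-\epsilon})$ I would invoke Dolgopyat--Naud type resolvent estimates: uniform exponential decay of $\|\mathcal{L}_{-(\sigma+it)\varphi}^{\,n}\|$ for $|t|$ large and $\sigma$ near $\delta$, which produce a zero-free half-plane $\{\Re s>\delta-\epsilon\}$ for $\zeta_f$ together with polynomial growth bounds on $\zeta_f'/\zeta_f$ in vertical strips. Feeding these into a truncated Perron formula and shifting the contour — the same mechanism that promotes the prime number theorem to a version with power-saving error — converts the simple pole at $s=\delta$ into the main term $\mathrm{Li}(T^\delta)$ and the zero-free strip into the error $O(T^{\delta-\epsilon})$. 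The hard part will be the Dolgopyat--Naud estimate itself: establishing uniform oscillatory cancellation (a uniform non-integrability condition on $\varphi$) for the high-frequency twisted transfer operators of a holomorphic expanding map. Once that analytic input is in hand, the coding, the zeta-function bookkeeping, and the final Tauberian step are routine.
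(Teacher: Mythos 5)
This statement is not proved in the paper at all; it is quoted verbatim from Oh--Winter \cite{Oh17}, so the only meaningful comparison is with the strategy of that cited proof. Your outline matches it: Markov coding of the expanding map, the Ruelle dynamical zeta function $\zeta_f(s)$ with potential $\log|f'|$, Bowen's formula placing the unique real zero of $s\mapsto P(-s\log|f'|)$ at $s=\delta$ and hence a simple pole of $\zeta_f$ there, a non-lattice/rigidity argument using the non-monomial hypothesis to clear the rest of the line $\Re s=\delta$, Dolgopyat--Naud resolvent bounds to open a zero-free strip $\Re s>\delta-\epsilon$ with polynomial control of $\zeta_f'/\zeta_f$, and a truncated Perron formula plus partial summation to convert this into $N_T(\mathcal{O})=Li(T^\delta)+O(T^{\delta-\epsilon})$. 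Be aware, though, that what you have written is a roadmap rather than a proof: the entire content of the power-saving error term lives in the Dolgopyat-type estimate, whose non-local-integrability hypothesis must actually be \emph{verified} for non-monomial hyperbolic rational maps (this is the main technical achievement of \cite{Oh17}, building on Naud), and you defer exactly that step. Two smaller points to tidy if you develop this: for complex $s$ the weight $|f'|^{-s}$ is only real-analytic, not holomorphic, in the space variable, so the nuclearity/meromorphic continuation must be run in Ruelle's real-analytic (Grothendieck--Fredholm) framework; and the semiconjugacy $\Psi_f:\Sigma\to J(f)$ is generally only finite-to-one on a small set, so the periodic-orbit counts of the shift and of $f$ must be matched up by the standard Bowen--Manning bookkeeping before the zeta-function identity can be used.
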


Using the above theorem, we prove a result regarding the existence of multipliers within an annulus which we will use in Section \ref{sec_Riem}.
\begin{prop}\label{prop_Oh}
	Under the assumptions in the above theorem, we further assume that $\delta>1$. Let $\{T_n\}\subset\mathbb{R}_{>0}$ be a sequence with $T_n\to\infty$. Then for $S_n\ge T_n^{\alpha}$ with $\alpha>1-\delta$, 
	$$N_{T_n+S_n}(\mathcal{O})-N_{T_n}(\mathcal{O})\to\infty.$$
\end{prop}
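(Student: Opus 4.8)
The plan is to read the conclusion off the Oh--Winter asymptotic $N_T(\mathcal{O}) = Li(T^\delta) + O(T^{\delta - \epsilon})$ quoted above, together with elementary estimates for the logarithmic integral. Write $R_n \defeq T_n + S_n$. Since $T \mapsto N_T(\mathcal{O})$ is non-decreasing, it is harmless to merge the two error terms, so for all large $n$
\begin{equation*}
N_{R_n}(\mathcal{O}) - N_{T_n}(\mathcal{O}) = \big( Li(R_n^\delta) - Li(T_n^\delta) \big) + E_n, \qquad |E_n| \le C\,R_n^{\delta - \epsilon},
\end{equation*}
with $C > 0$ depending only on $f$. It therefore suffices to show that the main term tends to $\infty$ and swamps $|E_n|$.

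To lower bound the main term I would substitute $t = s^\delta$ and estimate
\begin{equation*}
Li(R_n^\delta) - Li(T_n^\delta) = \int_{T_n}^{R_n} \frac{s^{\delta - 1}}{\log s}\, ds \ \ge\ \frac{1}{\log R_n}\int_{T_n}^{R_n} s^{\delta - 1}\, ds \ =\ \frac{R_n^\delta - T_n^\delta}{\delta \log R_n}.
\end{equation*}
Since $\delta > 1$, convexity of $x \mapsto x^\delta$ gives $R_n^\delta - T_n^\delta \ge \delta T_n^{\delta - 1} S_n \ge \delta T_n^{\delta - 1 + \alpha}$, whence
\begin{equation*}
Li(R_n^\delta) - Li(T_n^\delta) \ \ge\ \frac{T_n^{\delta - 1 + \alpha}}{\log R_n}.
\end{equation*}
The hypothesis $\alpha > 1 - \delta$ makes the exponent $\delta - 1 + \alpha$ positive, so this lower bound — hence the main term — already tends to $\infty$.

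It remains to see that $|E_n|$ is negligible compared with the main term, and here I would split according to the size of $S_n$. If $S_n \ge T_n$, then $T_n \le R_n / 2$, so $R_n^\delta - T_n^\delta \ge (1 - 2^{-\delta}) R_n^\delta$ and the bound above sharpens to $Li(R_n^\delta) - Li(T_n^\delta) \ge (1 - 2^{-\delta}) R_n^\delta / (\delta \log R_n)$; dividing by $|E_n| \le C R_n^{\delta - \epsilon}$ leaves a quotient $\gtrsim R_n^{\epsilon} / \log R_n \to \infty$. If instead $S_n < T_n$ — so $\alpha < 1$ — then $R_n < 2 T_n$, hence $|E_n| \le C (2 T_n)^{\delta - \epsilon}$, and one must check that the main-term lower bound $T_n^{\delta - 1 + \alpha} / \log R_n$ outgrows $T_n^{\delta - \epsilon}$; granting this, in both regimes $|E_n| = o\big( Li(R_n^\delta) - Li(T_n^\delta) \big)$, and therefore
\begin{equation*}
N_{R_n}(\mathcal{O}) - N_{T_n}(\mathcal{O}) \ \ge\ \big( Li(R_n^\delta) - Li(T_n^\delta) \big)\Big( 1 - \tfrac{|E_n|}{Li(R_n^\delta) - Li(T_n^\delta)} \Big) \ \longrightarrow\ \infty .
\end{equation*}

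The logarithmic-integral manipulations and the convexity step are routine bookkeeping. The step I expect to be the genuine obstacle is the error comparison in the small-$S_n$ regime: this is precisely where the hypotheses $\delta > 1$ and $\alpha > 1 - \delta$ must be weighed against the exponent $\epsilon$ coming from the Oh--Winter error term, so it is worth isolating as a separate estimate rather than absorbing it into the soft arguments used everywhere else.
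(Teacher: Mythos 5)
Your treatment of the main term is correct and is essentially the paper's: after the substitution $t=s^\delta$ both arguments reduce the problem to comparing $Li(R_n^\delta)-Li(T_n^\delta)\asymp T_n^{\delta-1}S_n/\log T_n$ against the Oh--Winter error, and your convexity bound together with $\alpha>1-\delta$ correctly shows the main term tends to infinity. The large-$S_n$ regime you dispose of is also fine (the paper simply reduces to the complementary regime $S_n=o(T_n)$ by monotonicity of $N_T$). But the step you flag in Case 2 is not routine bookkeeping to be isolated later --- it is the entire content of the proposition, and as you have set it up it cannot be closed. Bounding the error crudely by $|E_n|\le C\,T_n^{\delta-\epsilon}$ and the main term from below by $T_n^{\delta-1+\alpha}/\log R_n$, domination requires $\delta-1+\alpha>\delta-\epsilon$, i.e.\ $\alpha>1-\epsilon$. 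The hypothesis only gives $\alpha>1-\delta$, and $\epsilon$ is an unspecified (possibly very small) constant supplied by the Oh--Winter theorem, so for $\alpha\in(1-\delta,\,1-\epsilon]$ your estimate says nothing. Indeed, the bare asymptotic $N_T=Li(T^\delta)+O(T^{\delta-\epsilon})$ is consistent with $N_T$ being constant on windows of length up to roughly $T^{1-\epsilon}\log T$, which swallows $S_n=T_n^\alpha$ for such $\alpha$; so no amount of sharper elementary estimation of $Li$ will rescue this case.

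The paper closes exactly this gap by asserting a \emph{differenced} form of the error estimate, namely that $\bigl(N_{T_n+S_n}(\mathcal{O})-Li((T_n+S_n)^\delta)\bigr)-\bigl(N_{T_n}(\mathcal{O})-Li(T_n^\delta)\bigr)=C_2T_n^{\delta-\epsilon-1}S_n+o(T_n^{\delta-\epsilon-1}S_n)$, i.e.\ that the error term loses a factor of $S_n/T_n$ upon differencing, just as the main term does. Granting that, the error is smaller than the main term $C_1T_n^{\delta-1}S_n/\ln T_n$ by a factor of order $T_n^{-\epsilon}\ln T_n$ for \emph{every} admissible $S_n$, and the conclusion follows for all $\alpha>1-\delta$. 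That differenced estimate is the ingredient missing from your argument; it does not follow formally from the $O(T^{\delta-\epsilon})$ bound as quoted, so to complete your proof you would either need to extract such a refinement from the source of the asymptotic or restrict to $\alpha>1-\epsilon$. Note that the small-$S_n$ case you leave open is precisely the one used downstream in Corollary \ref{cor_multiplier}, where $S_n$ is a small power of $T_n$.
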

\begin{proof}
It is sufficient to prove the conclusion for $\alpha<1$ and $T_n^{\alpha}\le S_n=o(T_n)$. Note that $Li(x)=x/\ln(x)+o(x/\ln(x))$ for sufficiently large $x>0$. There exists a constant $C_1\not=0$ such that 
	$$Li((T_n+S_n)^\delta)-Li(t_n^\delta)=C_1\frac{T_n^{\delta-1}S_n}{\ln T_n}+o\left(\frac{T_n^{\delta-1}S_n}{\ln T_n}\right).$$
	Let $\epsilon>0$ be as in the above theorem.
	Then there exists $C_2\ge 0$ such that 
	\begin{align*}
	&(N_{T_n+S_n}(\mathcal{O})-Li((T_n+S_n)^\delta))-(N_{T_n}(\mathcal{O})-Li(T_n^{\delta}))\\
	&=C_2T_n^{\delta-\epsilon-1}S_n+o(T_n^{\delta-\epsilon-1}S_n).
	\end{align*}
	It follows that 
	$$N_{T_n+S_n}(\mathcal{O})-N_{T_n}(\mathcal{O})=C_1\frac{T_n^{\delta-1}S_n}{\ln T_n}+o\left(\frac{T_n^{\delta-1}S_n}{\ln T_n}\right).$$
	Since $S_n\ge T_n^{\alpha}$ with $\alpha> 1-\delta$,  
	we have $N_{T_n+S_n}(\mathcal{O})-N_{T_n}(\mathcal{O})\to\infty.$ 
\end{proof}

For $I=(-1, 1)$, we say a family $\{f_t\}_{t\in I}$ is an \emph{analytic family of degree $d\ge 2$ rational maps} if each $f_t$ is a rational map of degree $d$ and each coefficient of $f_t$ is an analytic function of $t$. Compare to the holomorphic family of degree $d\ge 2$ rational maps, see \cite[Section 1.2]{Kiwi15}. Note that if $\{f_t\}_{t\in I}$ is an analytic family of degree $d\ge 2$ rational maps, the periodic points and hence the corresponding multipliers of $f_t$ are algebraic functions in $t$.

\begin{cor}\label{cor_multiplier}
	Let $\{f_t\}_{t\in I}$ be an analytic family of hyperbolic rational maps of degree at least $2$. Suppose that the Hausdorff dimension of $J(f_t)$ is larger than $1$ for all $t\in I$. Let $a_t$ be the multiplier of a repelling cycle of $f_t$ with $a'(0)\not=0$. Then for any $\kappa\in(0, 1)$, the following holds:
	for any arbitrary multiplier $b_t$ of a repelling cycle of $f_t$ and any sufficiently large positive integer $n\ge 1$, there exist $\theta_{t, n}:=\theta(t,n,\kappa, b_t)\in [0, 2\pi)$ and multipliers $\lambda_{t, n}$ of $f_t$ of the form  
	$$\lambda_{t, n }=e^{i\theta_{t, n}}(a_t^{n}+a_t^{n\kappa}b_t+o(a_t^{n\kappa }b_t)).$$
\end{cor}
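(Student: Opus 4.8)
The plan is to combine the counting result of Proposition~\ref{prop_Oh} with a pigeonhole argument on arguments (angles) of multipliers. Fix $\kappa \in (0,1)$, fix a repelling cycle with multiplier $a_t$ (so $|a_t| > 1$ uniformly on compact subsets of $I$ since the family is hyperbolic, and $a_t$ is algebraic in $t$ with $a'(0) \neq 0$), and fix an arbitrary repelling multiplier $b_t$. For a large integer $n$, set $T_n := |a_t|^n$ and $S_n := |a_t|^{n\kappa}|b_t|$. First I would check the hypothesis of Proposition~\ref{prop_Oh}: since $|a_t| > 1$, we have $S_n/T_n = |a_t|^{n(\kappa-1)}|b_t| \to 0$, and $S_n = |a_t|^{n\kappa}|b_t| \geq T_n^{\alpha}$ for a suitable exponent $\alpha$ with $1-\delta < \alpha < 1$ — concretely any $\alpha \in (\kappa, 1)$ works for $n$ large, and since $\delta > 1$ we have $1 - \delta < 0 < \kappa < \alpha$, so the hypothesis $\alpha > 1-\delta$ is automatic. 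Hence $N_{T_n+S_n}(\mathcal{O}) - N_{T_n}(\mathcal{O}) \to \infty$: the annulus $\{T_n \le |\lambda| < T_n + S_n\}$ contains arbitrarily many multipliers of $f_t$ as $n \to \infty$.

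Next, I would extract from this annulus a multiplier of the prescribed asymptotic form. Write $T_n + S_n = |a_t|^n + |a_t|^{n\kappa}|b_t| = |a_t^n + a_t^{n\kappa} b_t|\,(1 + o(1))$ — more precisely, $|a_t^n + a_t^{n\kappa}b_t|$ lies between $T_n - S_n$ and $T_n + S_n$, so it differs from $T_n$ by at most $S_n = o(T_n)$, and the annulus $\{T_n \le |\lambda| < T_n + S_n\}$ (or a comparable one obtained by shifting $T_n$ by a bounded multiple of $S_n$, still within the scope of Proposition~\ref{prop_Oh}) captures multipliers $\lambda_{t,n}$ with $|\lambda_{t,n}| = |a_t^n + a_t^{n\kappa}b_t| + o(a_t^{n\kappa}b_t)$. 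Writing $\lambda_{t,n} = e^{i\theta_{t,n}} |\lambda_{t,n}|$ for a suitable $\theta_{t,n} \in [0,2\pi)$, the factorization $\lambda_{t,n} = e^{i\theta_{t,n}}(a_t^n + a_t^{n\kappa}b_t + o(a_t^{n\kappa}b_t))$ follows, because multiplying $a_t^n + a_t^{n\kappa}b_t$ by the ratio $|\lambda_{t,n}|/|a_t^n + a_t^{n\kappa}b_t| = 1 + o(1)$ changes it by $o(T_n)$, and one must only track that this error is in fact $o(a_t^{n\kappa}b_t)$; choosing the annulus width proportional to $S_n$ with a small enough constant makes the captured modulus agree with $|a_t^n + a_t^{n\kappa}b_t|$ up to $o(S_n) = o(a_t^{n\kappa}b_t)$.

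Finally, I would address uniformity/analyticity in $t$: Proposition~\ref{prop_Oh} is applied pointwise in $t$, and the multipliers $\lambda_{t,n}$, being multipliers of repelling cycles of $f_t$, are algebraic functions of $t$ on the hyperbolic component; the angles $\theta_{t,n}$ are then determined (up to the stated freedom in $[0,2\pi)$) by the modulus comparison above, which is why the statement only asserts existence of $\theta_{t,n}$ rather than analyticity of $\theta_{t,n}$. The main obstacle is the error-term bookkeeping in the previous paragraph: Proposition~\ref{prop_Oh} only guarantees that \emph{some} multiplier lies in an annulus of width $\asymp S_n$ around $T_n$, so one cannot pin the modulus of $\lambda_{t,n}$ to $T_n$ more precisely than $O(S_n)$; the remedy is that the target $a_t^n + a_t^{n\kappa}b_t$ already sits at distance $\asymp S_n$ from the circle of radius $T_n$, and the "$o$" in the conclusion is at the scale $a_t^{n\kappa}b_t = S_n$ itself, so a linear rescaling of the annulus by a factor $1 + o(1)$ is all the slack one needs — provided Proposition~\ref{prop_Oh} is invoked for the shifted radius $T_n' := |a_t^n + a_t^{n\kappa}b_t|$ and width $o(S_n)$, which still satisfies its hypotheses since $o(S_n) \geq (T_n')^{\alpha}$ can fail for too-small widths; so in fact one should take the width to be a fixed small constant times $S_n$, accept an $O(S_n)$ (not $o(S_n)$) discrepancy in modulus, and absorb it by noting $O(S_n) = O(a_t^{n\kappa}b_t)$ and then slightly enlarging the allowed $\kappa$ — or, cleanly, prove the result for every $\kappa' \in (\kappa,1)$ and relabel. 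I would present the argument in the form that gives the "$o$" directly by applying Proposition~\ref{prop_Oh} along a sequence of nested annuli of geometrically shrinking relative width, extracting a multiplier from each.
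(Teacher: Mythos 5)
Your overall strategy is the same as the paper's: apply Proposition \ref{prop_Oh} to an annulus whose inner radius is (essentially) $|a_t^n+a_t^{n\kappa}b_t|$, extract a multiplier from it, and read off the modulus. But the one step you flag as "the main obstacle" --- choosing an annulus width that is $o(a_t^{n\kappa}b_t)$ while still satisfying the hypothesis of Proposition \ref{prop_Oh} --- is exactly the step you do not resolve, and your proposed workarounds either weaken the conclusion or go in the wrong direction. Taking the width to be a constant times $S_n=|a_t^{n\kappa}b_t|$ only pins the modulus to within $O(a_t^{n\kappa}b_t)$, which is useless here because the error then swallows the term $a_t^{n\kappa}b_t$ that the corollary is designed to isolate (and which drives the whole argument in Proposition \ref{K=0}). "Proving the result for every $\kappa'\in(\kappa,1)$ and relabelling" makes matters worse, not better: since $|a_t|>1$, enlarging the exponent enlarges $a_t^{n\kappa'}b_t$ and hence the width; you would need to \emph{shrink} the exponent. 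The nested-annuli construction is unnecessary.

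The fix is one line, and it is what the paper does: your worry that a width of size $o(S_n)$ "can fail" the hypothesis $S_n\ge T_n^{\alpha}$ is unfounded, because $\delta>1$ makes the threshold $\alpha>1-\delta$ \emph{negative}, so any width that is at least a fixed \emph{positive} power of $T_n$ is admissible. Concretely, fix $\varepsilon'\in(0,1)$ and take the annulus
$$A_{t,n}=\bigl\{z: |a_t^n+a_t^{n\kappa}b_t|\le |z|\le |a_t^n+a_t^{n\kappa}b_t|+|a_t^{n\kappa}b_t|^{\varepsilon'}\bigr\}.$$
The width $|a_t^{n\kappa}b_t|^{\varepsilon'}$ is comparable to $T_n^{\kappa\varepsilon'}$ up to the bounded factor $|b_t|^{\varepsilon'}$, so it dominates $T_n^{\alpha}$ for, say, $\alpha=\kappa\varepsilon'/2>0>1-\delta$, and Proposition \ref{prop_Oh} yields a multiplier $\lambda_{t,n}\in A_{t,n}$ for $n$ large; at the same time $|a_t^{n\kappa}b_t|^{\varepsilon'}=o(|a_t^{n\kappa}b_t|)$ because $|a_t^{n\kappa}b_t|\to\infty$ and $\varepsilon'<1$. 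Hence $|\lambda_{t,n}|=|a_t^n+a_t^{n\kappa}b_t|+o(|a_t^{n\kappa}b_t|)$ with no further rescaling or relabelling, and writing $\lambda_{t,n}=e^{i\theta_{t,n}}|\lambda_{t,n}|$ and absorbing the modulus discrepancy into an error term with the same argument as $a_t^n+a_t^{n\kappa}b_t$ gives the stated form. With this choice of width inserted, the rest of your argument (including the remark that only existence, not analyticity, of $\theta_{t,n}$ is claimed) is fine.
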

\begin{proof}
	Pick $\varepsilon' \in (0,1)$.
	Consider the annulus
	$$A _{t,n}= \{ z \in \bbC ~|~ |a_t^n+ a_t^{n\kappa} b_t| \le |z| \le |a_t^n+ a_t^{n\kappa} b_t| + |a_t^{n\kappa} b_t|^{\varepsilon'}\}.$$
	We first claim that there exists a multiplier of $f_t$ in the annulus $A _{t,n}$ for sufficiently large $n$. Indeed,
	apply Proposition \ref{prop_Oh} for each $f_t$ with $T_n = |a_t^n+ a_t^{n\kappa} b_t| $ and $S_n = |a_t^{n\kappa} b_t|^{\varepsilon'}$. 
	
	Let $\lambda_{t, n}$ be such a multiplier of $f_t$ contained in $A_{t,n}$. Then, we must have $|\lambda_{t, n}| = |a_t^{n}+a_t^{n\kappa}b_t+o(a_t^{n\kappa}b_t)|$. Therefore $\lambda_{t, n }=e^{i\theta_{t, n}}(a_t^{n}+a_t^{n\kappa}b_t+o(a_t^{n\kappa }b_t))$ for some $\theta_{t, n}\in [0, 2\pi)$.
\end{proof}

\begin{rmk}
	Since $f_t$ has only countably many multipliers, there are uncountably many $\kappa\in(0,1)$ giving rise to the same values of $\lambda_{t,n}$ although the expressions of $\lambda_{t,n}$ are different.
\end{rmk}

\section{Multiplier functions} \label{sec_multipliers}
In this section, we introduce multiplier functions which we will need to construct our Riemannian metric. In particular, we give the definitions in Subsection 3.1 and discuss the analytic properties in Subsection 3.2. In Subsection 3.3, we consider complex multiplier functions which are natural extensions of the (real) multiplier functions.
\subsection{Definitions}
Let $f$ be a hyperbolic rational map of degree at least $2$ and let $\mu \in \Omega(J(f))$ be a primitive orbit measure. Denote by $\hat x=\{x,f(x),\cdots,f^{k-1}(x)\}$ the support of $\mu$ and $\lambda(\hat x)$ the multiplier of $\hat{x}$. Define
$$m_{f}(\mu):= \frac{1}{k}\log | \lambda(\hat x)|.$$

\begin{lem}
	There exists a unique continuous function $M_{f} : \Omega(J(f)) \to \bbR$ such that $M_f(\mu)=m_f(\mu)$ if $\mu\in \Omega(J(f))$ is a primitive orbit measure.
\end{lem}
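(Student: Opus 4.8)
The plan is to define $M_f$ on the dense subset of primitive orbit measures by the formula $m_f$, verify that $m_f$ is uniformly continuous there with respect to the weak-$*$ topology, and then extend by density and uniqueness. Uniqueness is immediate: two continuous functions on $\Omega(J(f))$ agreeing on a dense set (Proposition \ref{density}) must coincide. So the content is the construction and the continuity estimate.

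First I would recall that since $f$ is hyperbolic, $f$ is expanding on a neighborhood of $J(f)$, so there exist constants $C>0$ and $\rho>1$ with $|(f^n)'(x)| \ge C\rho^n$ for all $x\in J(f)$ and all $n\ge 1$, and the function $\log|f'|$ is H\"older continuous on $J(f)$, say with exponent $\gamma$ and constant $L$. For a primitive orbit measure $\mu$ supported on a $k$-cycle $\hat x$, the quantity $m_f(\mu) = \frac1k\sum_{i=0}^{k-1}\log|f'(f^i(x))| = \int_{J(f)}\log|f'|\,d\mu$ is nothing but the integral of the fixed continuous (indeed H\"older) function $\varphi \defeq \log|f'|$ against $\mu$. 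Therefore the natural candidate is simply
\[
M_f(\mu) \defeq \int_{J(f)}\log|f'(z)|\,d\mu(z),
\]
which is manifestly continuous on all of $\Omega(J(f))$ in the weak-$*$ topology because $\varphi$ is continuous; and it agrees with $m_f$ on primitive orbit measures by the displayed averaging formula in the ``Primitive orbit measures'' subsection. This already proves both existence and (via density) uniqueness.

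The one subtlety worth spelling out is why $\int \log|f'|\,d\mu$ really equals $\frac1k\log|\lambda(\hat x)|$: by the chain rule $\lambda(\hat x) = (f^k)'(x) = \prod_{i=0}^{k-1} f'(f^i(x))$, so $\log|\lambda(\hat x)| = \sum_{i=0}^{k-1}\log|f'(f^i(x))|$, and dividing by $k$ gives exactly $\int \log|f'|\,d\mu$ since $\mu$ is the uniform measure on the cycle. Note also that $\log|f'|$ is finite and continuous on $J(f)$: hyperbolicity forces $J(f)$ to avoid all critical points of $f$, so $f'$ is nonvanishing and bounded away from $0$ and $\infty$ on $J(f)$. (If one prefers to phrase multipliers in a fixed chart, one observes that the cocycle $\log|f'|$ computed in any chart differs from another by a coboundary, which integrates to zero against the invariant measure $\mu$, so $M_f$ is chart-independent; but for the statement as given it suffices to work in the coordinate in which the multiplier is defined.)

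I do not expect a genuine obstacle here: the whole point is that $m_f$ extends because it is secretly the restriction of the globally continuous functional $\mu\mapsto\int\varphi\,d\mu$. The only place to be careful is bookkeeping: making sure $\varphi=\log|f'|$ is bona fide continuous on the compact set $J(f)$ (which uses hyperbolicity $\Rightarrow$ $J(f)\cap\mathrm{Crit}(f)=\emptyset$), and invoking Proposition \ref{density} for the density needed for uniqueness. Finally I would remark, for use in the sequel, that this identifies $M_f(\mu)$ with the Lyapunov exponent $\int\log|f'|\,d\mu$ of $f$ with respect to $\mu$, which is the interpretation quoted in the introduction.
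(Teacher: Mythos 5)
Your proposal is correct and follows essentially the same route as the paper: define $M_f(\mu)=\int_{J(f)}\log|f'|\,d\mu$, observe it agrees with $m_f$ on primitive orbit measures, get continuity from the weak-$*$ topology since $\log|f'|$ is continuous on $J(f)$, and get uniqueness from the density of primitive orbit measures. The extra remarks (chain rule, absence of critical points on $J(f)$) are correct but just spell out what the paper leaves implicit.
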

\begin{proof}
	Given $\mu\in\Omega(J(f))$, we define
	$$M_f(\mu) = \int_{J(f)} \log|f'| d\mu.$$ 
	Then if $\mu$ is a primitive orbit measure, by definition of  $m_f(\mu)$, we have $M_f(\mu)=m_f(\mu)$. The continuity of $M_f$ follows immediately from the weak-$\ast$ topology on $\Omega(J(f))$ since $\log|f'|$ is continuous on $J(f)$. The uniqueness of $M_f$ follows from the density of primitive orbit measures in $\Omega(J(f))$. Indeed, if $\widehat{M}_f$ is another continuous function on $\Omega(J(f))$ such that $\widehat{M}_f=m_f$ on the subset of primitive orbit measures, then by Proposition \ref{density}, we have $\widehat{M}_f=M_f$ on a dense subset of $\Omega(J(f))$. It follows that $\widehat{M}_f=M_f$ on $\Omega(J(f))$.
\end{proof}

Let $\widetilde{\mathcal{H}}$ be a hyperbolic component in $\mathrm{Rat}_d$ and let $U(f)\subset \widetilde{\mathcal{H}}$ be defined in Section \ref{intro}. For $g\in U(f)$, recall that $\phi_g : J(f) \to J(g)$ is the corresponding homeomorphism conjugating the dynamics $f : J(f) \to J(f)$ to $g : J(g) \to J(g)$.


\begin{lem}
	The pushforward map $(\phi_g)_* : \Omega(J(f)) \to \Omega(J(g))$ is a homeomorphism.
\end{lem}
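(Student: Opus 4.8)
The plan is to verify the three defining properties of a homeomorphism: that $(\phi_g)_*$ is a well-defined map $\Omega(J(f)) \to \Omega(J(g))$, that it is continuous, and that it has a continuous inverse. First I would recall that $\phi_g : J(f) \to J(g)$ is itself a homeomorphism conjugating $f$ to $g$, i.e. $\phi_g \circ f = g \circ \phi_g$ on $J(f)$. Given $\mu \in \Omega(J(f))$, the pushforward $(\phi_g)_*\mu$ is defined by $((\phi_g)_*\mu)(A) = \mu(\phi_g^{-1}(A))$ for Borel sets $A \subset J(g)$; since $\phi_g$ is a homeomorphism this is a Borel probability measure on $J(g)$. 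To see it is $g$-invariant, one computes for any Borel $A$ that $((\phi_g)_*\mu)(g^{-1}(A)) = \mu(\phi_g^{-1}(g^{-1}(A))) = \mu(f^{-1}(\phi_g^{-1}(A))) = \mu(\phi_g^{-1}(A)) = ((\phi_g)_*\mu)(A)$, where the second equality uses the conjugacy $\phi_g^{-1} \circ g = f \circ \phi_g^{-1}$ and the third uses $f$-invariance of $\mu$. Positivity of $(\phi_g)_*\mu$ on (relatively) open subsets of $J(g)$ follows from positivity of $\mu$ together with the fact that $\phi_g^{-1}$ carries nonempty open subsets of $J(g)$ to nonempty open subsets of $J(f)$. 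Hence $(\phi_g)_*\mu \in \Omega(J(g))$.

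Next I would establish continuity with respect to the weak-$*$ topology. For $\psi \in C(J(g))$, the change-of-variables formula gives $\int_{J(g)} \psi \, d(\phi_g)_*\mu = \int_{J(f)} (\psi \circ \phi_g) \, d\mu$, and since $\psi \circ \phi_g \in C(J(f))$, the map $\mu \mapsto \int_{J(g)} \psi \, d(\phi_g)_*\mu$ is weak-$*$ continuous on $\Omega(J(f))$; as this holds for every $\psi \in C(J(g))$, the map $(\phi_g)_*$ is continuous. Finally, the conjugacy $\phi_{g}^{-1} : J(g) \to J(f)$ enjoys the symmetric properties (it conjugates $g$ to $f$), so the same argument shows $(\phi_g^{-1})_* : \Omega(J(g)) \to \Omega(J(f))$ is a well-defined continuous map, and $(\phi_g^{-1})_* \circ (\phi_g)_* = \mathrm{id}$, $(\phi_g)_* \circ (\phi_g^{-1})_* = \mathrm{id}$ by functoriality of pushforward along $\phi_g^{-1} \circ \phi_g = \mathrm{id}$. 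Therefore $(\phi_g)_*$ is a homeomorphism.

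There is no serious obstacle here; the only mild point to be careful about is confirming that the pushforward of a \emph{positive} invariant measure is again positive in the sense intended (positive mass on every nonempty relatively open set), which reduces to $\phi_g$ and $\phi_g^{-1}$ being open maps — automatic since both are homeomorphisms. One could also simply invoke the general fact that pushforward along a homeomorphism is a weak-$*$ homeomorphism between spaces of probability measures, and note that it restricts to the invariant (and positive) subspaces because the conjugacy intertwines the dynamics. I would write the proof in the direct elementary form above so the intertwining computation is explicit.
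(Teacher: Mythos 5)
Your proof is correct and follows essentially the same route as the paper: continuity of $(\phi_g)_*$ is obtained by testing against continuous functions and pulling back along $\phi_g$, and the inverse is $(\phi_g^{-1})_*$ by the symmetric argument. The only difference is that you spell out the well-definedness (the $g$-invariance and positivity of the pushforward measure), which the paper subsumes under the remark that $(\phi_g)_*$ is functorial; this added detail is harmless and arguably an improvement.
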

\begin{proof}
	Since $(\phi_g)_*$ is functorial, it is one-to-one and onto. If $\mu_n \to \mu$ in $\Omega(J(f))$ and $h$ is a continuous function on $J(g)$, then $((\phi_g)_*\mu_n)(h) = \mu_n(h \circ \phi_g)$. Note that $h \circ \phi_g$ has compact support since $\phi_g$ is a homeomorphism. As $\mu_n \to \mu$ in $\Omega(J(f))$, we have $\mu_n(h \circ \phi_g) \to \mu(h \circ \phi_g)$, i.e. $(\phi_g)_*\mu_n \to (\phi_g)_*\mu$. Hence, $(\phi_g)_*$ is continuous. Similarly, the map $(\phi_g^{-1})_*$ is continuous. Moreover, the composition of $(\phi_g)_*$ and $(\phi_g^{-1})_*$ is the identity.
\end{proof}

We obtain the following corollary as an immediate consequence of the two previous lemmas.
\begin{cor}\label{cor_cts}
	The map $M_{g} \circ (\phi_g)_* : \Omega(J(f)) \to \bbR$ is continuous.
\end{cor}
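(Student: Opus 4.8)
The statement is a purely formal consequence of the two preceding lemmas, so the plan is simply to assemble them in the right order. First I would note that since $\widetilde{\mathcal{H}}$ is a hyperbolic component in $\mathrm{Rat}_d$ and $U(f)\subset\widetilde{\mathcal{H}}$, every $g\in U(f)$ is itself a hyperbolic rational map of degree at least $2$. Hence the first lemma above applies verbatim with $g$ in place of $f$: there is a (unique) continuous function $M_g\colon\Omega(J(g))\to\bbR$, given concretely by $M_g(\nu)=\int_{J(g)}\log|g'|\,d\nu$.

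Next I would invoke the second lemma to get that the pushforward $(\phi_g)_*\colon\Omega(J(f))\to\Omega(J(g))$ is a homeomorphism, and in particular continuous. The map $M_g\circ(\phi_g)_*\colon\Omega(J(f))\to\bbR$ is then a composition of two continuous maps, hence continuous; this completes the proof.

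There is essentially no obstacle here — the only point worth a sentence is the observation that the hypothesis of the first lemma (hyperbolicity of the rational map) is inherited by every $g\in U(f)$ because $U(f)$ lies inside a hyperbolic component, so that $M_g$ is defined and continuous. If desired, one can make the composition fully explicit: for $\mu\in\Omega(J(f))$ one has $M_g((\phi_g)_*\mu)=\int_{J(g)}\log|g'|\,d((\phi_g)_*\mu)=\int_{J(f)}\log|g'\circ\phi_g|\,d\mu$, and continuity in $\mu$ follows directly from the weak-$*$ topology since $\log|g'\circ\phi_g|$ is a fixed continuous function on the compact set $J(f)$; but the composition argument above already suffices.
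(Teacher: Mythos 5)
Your proof is correct and matches the paper's intent exactly: the corollary is stated there as an immediate consequence of the two preceding lemmas, i.e., the composition of the continuous map $M_g$ (which exists since every $g\in U(f)$ is hyperbolic) with the homeomorphism $(\phi_g)_*$. Your added remark making the composition explicit via $\int_{J(f)}\log|g'\circ\phi_g|\,d\mu$ is a harmless bonus but not needed.
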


For the $f \in \widetilde{\mathcal{H}}$, we define a map $M : \Omega(J(f)) \times U(f) \to \bbR$ by
\begin{equation*}
M(\mu, g) = M_{g} \left((\phi_g)_* \mu \right).
\end{equation*}
Then for any fixed $g\in U(f) $, by Corollary \ref{cor_cts}, the function $M$ is continuous in $\mu$. We will discuss analytic properties of $M$ in the next subsection.

\begin{defn}
Given any $\mu \in \Omega(J(f))$, we call $M_{\mu}(g):= M(\mu, g)$ the {\it multiplier function} associated to $\mu$.
\end{defn}

\begin{rmk}
	If $\mu \in \Omega(J(f))$ is a primitive orbit measure, then it is supported on a primitive periodic orbit $\hat x$ of period $k$ for some $k\ge 1$ in $J(f)$ and 
	$$M_{\mu}(g) =\frac{1}{k}\log \big| \lambda(\phi_g(\hat x))\big|.$$
\end{rmk}

\subsection{Analyticity of multiplier functions} \label{sec_anamul}
In this subsection, we discuss analytic properties of the map $M : \Omega(J(f)) \times U(f) \to \bbR$ as defined in the previous subsection. 

We recall that if $X$ is a smooth manifold and $C^{\infty}(X,\bbR)$ is the set of smooth real-valued functions on $X$, the $C^{\infty}$-topology on $C^{\infty}(X,\bbR)$ is given by $\psi_n \to \psi$ if the derivatives of $\psi_n$ converge uniformly on compact subsets of $X$ to the derivatives of $\psi$. Our main goal  in this subsection is to prove the following result.
\begin{prop} \label{thm_Manalytic}
The map $M : \Omega(J(f)) \times U(f)\to \bbR$ satisfies the following properties:
\begin{enumerate}
	\item for each $\mu \in \Omega(J(f))$, the multiplier function $M_{\mu} : U(f) \to \bbR$ is real analytic; and
	\item the map from $\Omega(J(f))$ to $C^{\infty}(U(f), \bbR)$, sending $\mu$ to $M_{\mu}$, is continuous.
\end{enumerate}
\end{prop}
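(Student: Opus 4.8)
The plan is to analyze the multiplier function $M_\mu$ in two stages: first prove real analyticity in the parameter $g$ for each fixed $\mu$, then prove continuity of $\mu\mapsto M_\mu$ in the $C^\infty$-topology. The key tool for analyticity is the holomorphic motion $\phi_g$ of the Julia set over the hyperbolic component. First I would recall that on a hyperbolic component the holomorphic motion $J(f)\to J(g)$ depends holomorphically on $g$ in the sense of $\lambda$-lemma type statements, and moreover it is uniformly quasiconformal with dilatation controlled on compact subsets of $U(f)$. The natural route is to pass to the thermodynamic formalism: since $f$ is hyperbolic, $(J(f),f)$ is expanding, and the potential $\log|g'|$ pulled back via $\phi_g$ varies real-analytically in $g$. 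Concretely, for a primitive orbit measure $\mu$ supported on a $k$-cycle $\hat x$, we have $M_\mu(g)=\frac1k\log|\lambda(\phi_g(\hat x))|=\frac1k\log|(g^k)'(\phi_g(x))|$, and since the motion carries periodic points of $f$ to periodic points of $g$ which (by the implicit function theorem applied to $g^k(z)=z$ in an analytic family) move real-analytically in $g$, and $|(g^k)'|$ is real-analytic and nonzero there (repelling cycle), $M_\mu$ is real-analytic on $U(f)$ for primitive orbit measures. Then I would upgrade to arbitrary $\mu\in\Omega(J(f))$ using the integral formula $M_\mu(g)=\int_{J(f)}\log|g'\circ\phi_g|\,d\mu$ together with a uniform-on-compacta bound, in the $C^\infty(U(f),\bbR)$-sense, on the family of analytic functions $g\mapsto\log|g'(\phi_g(z))|$ parametrized by $z\in J(f)$; uniform convergence of analytic functions and all their derivatives forces the limit to be analytic.

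For part (1) in full generality, the cleanest argument is to fix a point $g_0\in U(f)$ and a small polydisc $D$ around $g_0$ (in suitable local analytic coordinates on $\widetilde{\mathcal H}$), and show that the functions $u_z(g):=\log|g'(\phi_g(z))|$ for $z\in J(f)$ extend to a \emph{uniformly bounded} family of functions real-analytic on $D$, with a uniform bound on each Taylor coefficient. This follows because the holomorphic motion gives a holomorphic (in $g$, for fixed $z$, after complexifying the parameter if needed — but here $\widetilde{\mathcal H}\subset\mathrm{Rat}_d$ is already a complex manifold and $\phi_g$ depends holomorphically on $g$) map $(g,z)\mapsto\phi_g(z)$, so $g\mapsto g'(\phi_g(z))$ is holomorphic and nonvanishing on a neighborhood of $J(f)\times\{g_0\}$ by hyperbolicity (the motion stays in the expanding region uniformly), hence $\log|g'(\phi_g(z))|$ is harmonic — in particular real-analytic — with derivatives bounded uniformly in $z$ by Cauchy estimates. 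Integrating against $\mu$, differentiation under the integral sign is justified by these uniform bounds, so $M_\mu(g)=\int u_z(g)\,d\mu(z)$ is real-analytic (indeed harmonic), giving (1). This simultaneously shows $M_\mu$ is harmonic as asserted in the introduction.

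For part (2), I would show that if $\mu_n\to\mu$ weak-$*$ in $\Omega(J(f))$, then $M_{\mu_n}\to M_\mu$ together with all parameter-derivatives, uniformly on compact subsets of $U(f)$. Fix a compact $K\subset U(f)$. By the uniform Cauchy estimates from part (1), it suffices to prove uniform convergence of $M_{\mu_n}$ to $M_\mu$ on a slightly larger compact set $K'\supset K$, since then all derivatives converge on $K$ by Cauchy's integral formula for derivatives (an analogue of the fact that locally uniform convergence of analytic functions implies locally uniform convergence of derivatives). For the uniform convergence itself: for each $g\in K'$, $M_{\mu_n}(g)-M_\mu(g)=\int u_g\,d(\mu_n-\mu)$ where $u_g(z)=\log|g'(\phi_g(z))|$ is continuous on $J(f)$; weak-$*$ convergence gives pointwise (in $g$) convergence, and equicontinuity of the family $\{g\mapsto M_\mu(g)\}_{\mu}$ on $K'$ (again from the uniform derivative bounds) plus compactness of $K'$ upgrades this to uniform convergence via an Arzelà–Ascoli / $\varepsilon$-net argument.

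The main obstacle I anticipate is making the holomorphic dependence of $\phi_g$ on $g$ precise enough to get the \emph{uniform} Cauchy estimates on $\{u_z\}_{z\in J(f)}$ — one must invoke that the holomorphic motion of $J(f)$ over the hyperbolic component is a holomorphic motion in the technical sense (Mañé–Sad–Sullivan / Lyubich), that it respects the dynamics, and that hyperbolicity keeps $\phi_g(z)$ in a region where $g'\ne 0$ uniformly over $z\in J(f)$ and $g$ in a neighborhood of $g_0$. Once that input is in hand, everything else (differentiation under the integral sign, harmonicity, the Arzelà–Ascoli upgrade, and the passage from primitive orbit measures to all of $\Omega(J(f))$) is routine. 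An alternative to the direct holomorphic-motion estimate is to route through the transfer operator / pressure function: $M_\mu(g)=\frac{\partial}{\partial s}\big|_{s=0}P(f,\,\text{pullback of }s\log|g'|)$ and use analyticity of pressure in the hyperbolic (expanding) setting, but I expect the holomorphic-motion route to be more self-contained given what is already established in the excerpt.
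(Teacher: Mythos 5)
Your proposal is correct and rests on the same two pillars as the paper's proof: harmonicity of $g\mapsto\log|g'(\phi_g(z))|$ (coming from holomorphy of the motion in $g$ and the absence of critical points on Julia sets), and the fact that locally uniform convergence of harmonic functions entails locally uniform convergence of all derivatives. The one real difference is in part (1): the paper approximates an arbitrary $\mu$ by primitive orbit measures $\mu_n$ (Proposition \ref{density}), notes that each $M_{\mu_n}$ is a finite sum of harmonic functions of $g$, and concludes that the locally uniform limit $M_\mu$ is harmonic — the locally uniform convergence being supplied by an equicontinuity lemma (Lemma \ref{lem_unif}) proved via a direct estimate on $|M_\nu(g)-M_\nu(h)|$, uniform in $\nu$, using the continuous function $\alpha_g(h,z)=g'(\phi_g(z))/h'(\phi_h(z))$, which is identically $1$ at $h=g$. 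You instead differentiate under the integral sign, using uniform-in-$z$ Cauchy-type bounds on the harmonic family $u_z(g)=\log|g'(\phi_g(z))|$; this yields harmonicity of $M_\mu$ for every $\mu$ directly, without invoking the density of primitive orbit measures. Both routes are valid: yours makes part (1) independent of the specification-property input, while the paper's single estimate delivers the equicontinuity needed for part (2) at the same time, so the proofs end up of comparable length. For part (2) your Arzel\`a--Ascoli upgrade of pointwise to locally uniform convergence is exactly the content of Lemma \ref{lem_unif}, and the final step (derivatives of harmonic functions converge under locally uniform convergence) is identical.
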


To prove the proposition, we first show that if $\mu_n \to \mu$ in $\Omega(J(f))$, the multiplier functions $M_{\mu_n}$ converge uniformly on compact subsets of $U(f)$. 

\begin{lem} \label{lem_unif}
	If $\mu_n \to \mu$ in $\Omega(J(f))$, then the multiplier functions $M_{\mu_n} \to M_{\mu}$ uniformly on compact subsets of $U(f)$.
\end{lem}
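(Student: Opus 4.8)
The plan is to reduce the statement to a uniform-convergence fact about integrating a fixed Hölder test function against the pushed-forward measures, and then to exploit the fact that the homeomorphisms $\phi_g$ are themselves controlled uniformly over compact sets of parameters via the holomorphic motion. First I would unwind the definitions: for $g \in U(f)$ we have $M_{\mu_n}(g) = M_g((\phi_g)_*\mu_n) = \int_{J(g)} \log|g'|\, d((\phi_g)_*\mu_n) = \int_{J(f)} \big(\log|g'| \circ \phi_g\big)\, d\mu_n$. So if we set $\Psi_g := \log|g'|\circ\phi_g : J(f) \to \bbR$, then $M_{\mu_n}(g) - M_{\mu}(g) = \mu_n(\Psi_g) - \mu(\Psi_g)$. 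The point is that weak-$*$ convergence $\mu_n \to \mu$ gives $\mu_n(\Psi_g) \to \mu(\Psi_g)$ for each fixed $g$, and the task is to upgrade this to uniformity in $g$ over a compact set $K \subset U(f)$.

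Next I would establish an equicontinuity/compactness statement for the family $\{\Psi_g\}_{g\in K}$ inside $C(J(f),\bbR)$. Since $K$ is compact in $U(f)$, hyperbolicity persists on $K$, so the critical orbits stay away from the Julia sets and $\log|g'|$ is real-analytic and bounded (with bounded derivatives) in a fixed neighborhood of $\bigcup_{g\in K} J(g)$, which is itself a compact subset of $\widehat{\bbC}$ avoiding critical points. Meanwhile, the holomorphic motion $\phi_g$ depends continuously on $g$ in the uniform topology — and by the $\lambda$-lemma the maps $\phi_g$ are uniformly quasiconformal, hence uniformly Hölder, on $J(f)$ for $g$ in $K$. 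Combining these, $g \mapsto \Psi_g$ is a continuous map from $K$ into $C(J(f),\bbR)$; in particular its image is a compact, hence totally bounded, subset of $C(J(f),\bbR)$.

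Now the uniform convergence follows from a standard $\epsilon/3$ argument: given $\epsilon>0$, cover the compact set $\{\Psi_g : g \in K\}$ by finitely many $\epsilon/3$-balls centered at $\Psi_{g_1},\dots,\Psi_{g_m}$; by weak-$*$ convergence choose $N$ so that $|\mu_n(\Psi_{g_j}) - \mu(\Psi_{g_j})| < \epsilon/3$ for all $n \ge N$ and all $j = 1,\dots,m$; then for arbitrary $g \in K$, pick $g_j$ with $\|\Psi_g - \Psi_{g_j}\|_\infty < \epsilon/3$ and use $|\mu_n(\Psi_g) - \mu(\Psi_g)| \le |\mu_n(\Psi_g - \Psi_{g_j})| + |\mu_n(\Psi_{g_j}) - \mu(\Psi_{g_j})| + |\mu(\Psi_{g_j} - \Psi_g)| < \epsilon$, using that $\mu_n, \mu$ are probability measures. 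Since $\sup_{g\in K}|M_{\mu_n}(g) - M_\mu(g)| < \epsilon$ for $n \ge N$, this is exactly uniform convergence on $K$.

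The main obstacle I anticipate is the claim that $g \mapsto \Psi_g$ is continuous into $C(J(f),\bbR)$ — equivalently, that $g \mapsto \phi_g$ is continuous in the uniform topology and that the resulting family $\{\Psi_g\}$ is equicontinuous. This is where one must invoke the structure of the holomorphic motion carefully: continuity of $\phi_g$ in $g$ is part of the $\lambda$-lemma machinery, but one should check that convergence $g_k \to g$ in $U(f)$ forces $\phi_{g_k} \to \phi_g$ uniformly on $J(f)$ (not just pointwise), which again follows from uniform quasiconformality plus pointwise convergence on a dense set of periodic points, together with uniform Hölder bounds. Everything else is soft functional analysis.
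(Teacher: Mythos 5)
Your proposal is correct and is essentially the paper's own argument: both proofs reduce to the single key estimate that $g \mapsto \log|g'|\circ\phi_g$ is continuous as a map into $C(J(f),\bbR)$ (the paper's function $\alpha_g(h,z)=\frac{g'\circ\phi_g(z)}{h'\circ\phi_h(z)}$ is exactly your $\|\Psi_g-\Psi_h\|_\infty$ estimate), and then finish with a soft compactness step. The only cosmetic difference is that the paper packages this as equicontinuity of the family $\{M_\nu\}$ uniformly in $\nu$ combined with pointwise convergence, while you package it as total boundedness of $\{\Psi_g\}_{g\in K}$ plus an $\epsilon/3$ net; these are the same idea.
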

\begin{proof}
	Let $g \in U(f)$. For any $\epsilon>0$, we first claim that there is a neighborhood $W$ of $g$ in $U(f)$ such that for any $\nu\in\Omega(J(f))$ and any $h\in W$, 
	$$|M_{\nu}(g)-M_{\nu}(h)|<\epsilon.$$
	Indeed, 
	\begin{align*}
	|M_{\nu}(g)-M_{\nu}(h)|&=\left|\int_{J(f)} \log|g'\circ\phi_g| d\nu-\int_{J(f)} \log| h'\circ\phi_{h}| d\nu\right|\\
	&=\left|\int_{J(f)} (\log|g'\circ\phi_g|-\log|h'\circ\phi_{h}|) d\nu\right|\\
	&=\left|\int_{J(f)} \log\left|\frac{g'\circ\phi_g}{h'\circ\phi_{h}} \right| d\nu\right|\\
	&\le \max\left\{\left|\log\left|\frac{g'\circ\phi_g(z)}{h'\circ\phi_{h}(z)} \right|\right|:z\in J(f)\right\}.
	\end{align*}
	Consider
	$$\alpha_g(h,z):=\frac{g'\circ\phi_g(z)}{h'\circ\phi_{h}(z)}.$$ 
	Then $\alpha_g$ is well-defined on $U(f)\times J(f)$ since the Julia sets do not contain critical points. Moreover, $\alpha_g$ is continuous in both $h$ and $z$.
	Note that $\alpha_g(g,z)=1$ for all $z\in J(f)$. Since $J(f)$ is compact, we can choose a sufficiently small neighborhood $W$ of $g$ such that  $|\alpha_g(h,z)|<e^\epsilon$ for all $h\in W$ and all $z \in J(f)$. Hence the claim holds.
	
	Now consider the sequence $\{\mu_n\}$. For any $\epsilon>0$, by the previous claim, we can choose a neighborhood $V$ of $g$ in $U(f)$ such that for all $\mu_n$ and all $h\in V$, we have $|M_{\mu_n}(g)-M_{\mu_n}(h)|<\epsilon$. It follows that the sequence $\{M_{\mu_n}\}$ is equicontinuous on any compact subset of $U(f)$. Moreover, by the definition of $M_{\mu_n}$ and $M_{\mu}$, we have that $M_{\mu_n}$ converges pointwise to $M_{\mu}$. It follows that $M_{\mu_n}$ locally uniformly converges to $M_{\mu}$.
\end{proof}

\begin{proof}[Proof of Proposition \ref{thm_Manalytic}]
	Let $\mu \in \Omega(J(f))$. By Proposition \ref{density}, there exists a sequence $\mu_n$ of primitive orbit measures  in $\Omega(J(f))$ such that $\mu_n \to \mu$. We note that $M_{\mu_n}$ is harmonic. Indeed, note that  
	$$M_{\mu_n}(g) =  \int_{J(f)} \log|g'\circ\phi_g| d\mu_n.$$
	Since $\phi_g$ is analytic in $g$, the map $g\mapsto g'\circ\phi_g$ is analytic in $g$. Then $\log|g'\circ\phi_g|$ is harmonic in $g$. By Lemma \ref{lem_unif}, the sequence $M_{\mu_n}$ converges to $M_{\mu}$ uniformly on compact sets. Therefore $M_{\mu}$ is harmonic. In particular, it is real-analytic. This completes the proof of statement $(1)$.
	
For statement $(2)$, if $\mu_n \to \mu$, again by Lemma \ref{lem_unif}, the sequence $M_{\mu_n}$ converges to $M_{\mu}$ unifromly on compact sets. As $M_{\mu_n}$ are harmonic, uniform convergence on compact sets implies uniform convergence of derivatives on compact sets.
\end{proof}

\subsection{Complex multiplier functions} \label{sec_cxmul}
In this subsection, we introduce complex multiplier functions which are natural extensions of the real multiplier functions.  

Consider the following map $\widetilde{M}: \Omega(J(f)) \times U(f)\to \bbC$ defined by
$$\widetilde{M}(\mu, g) = \int_{J(g)} \log g' ~d((\phi_g)_*\mu) =  \int_{J(f)} \log (g'\circ\phi_g) d\mu.$$
For each $\mu\in\Omega(J(f))$, let $\widetilde{M}_{\mu}: U(f)\to \bbC$ be the function $\widetilde{M}_{\mu}(g):=\widetilde{M}(\mu, g)$.
Then the multiplier function $M_{\mu}$ defined in previous subsection equals the real part of $\widetilde{M}_{\mu}$. We call $\widetilde{M}_{\mu}$ the {\it complex multiplier function} associated to $\mu$.

The space $C^{\omega}(U(f), \bbC)$ consists of all holomorphic functions on $U(f)$ and we equip it with the topology of uniform convergence on compact subsets. Parallel to Proposition \ref{thm_Manalytic}, we have the following result for $\widetilde{M}_{\mu}$. The proof of Proposition \ref{thm_Manalytic} works verbatim here.
\begin{prop}
	The map $\widetilde{M}: \Omega(J(f)) \times U(f) \to \bbC$ satisfies the following properties:
\begin{enumerate}
\item for each $\mu \in \Omega(J(f))$, the complex multiplier function $\widetilde{M}_{\mu}: U(f) \to \bbC$ is holomorphic; and
\item the map from $\Omega(J(f))$ to $C^{\omega}(U(f), \bbC)$, sending $\mu$ to $\widetilde{M}_{\mu}$, is continuous.
\end{enumerate}	
\end{prop}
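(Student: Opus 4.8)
The plan is to mimic, essentially word for word, the proof of Proposition \ref{thm_Manalytic}, replacing $\log|g'\circ\phi_g|$ by $\log(g'\circ\phi_g)$ and "harmonic" by "holomorphic" throughout. The only genuine point requiring care is the choice of a well-defined branch of the logarithm, since $\log$ is multivalued; once this is pinned down, analyticity and continuity come for free from the earlier lemmas.

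First I would address the branch issue, which I expect to be the main (in fact the only) obstacle. For $g \in U(f)$ and $z \in J(f)$, the quantity $g'\circ\phi_g(z)$ is nonzero (Julia sets avoid critical points, as already noted in the proof of Lemma \ref{lem_unif}), and $(g,z)\mapsto g'\circ\phi_g(z)$ is continuous on $U(f)\times J(f)$ and holomorphic in $g$. However, $\widetilde M(\mu,g)$ is only well-defined modulo $2\pi i \bbZ$ unless we fix a convention. I would define $\widetilde M_\mu(g)$ on primitive orbit measures first: for $\mu$ supported on a period-$k$ cycle $\hat x$ with multiplier $\lambda(\phi_g(\hat x))$, set $\widetilde M_\mu(g) = \frac{1}{k}\log \lambda(\phi_g(\hat x))$ using, say, the principal branch — or, better, simply observe that $\mathrm{Re}\,\widetilde M_\mu = M_\mu$ is already determined and the imaginary part is determined up to an additive locally constant multiple of $2\pi/k$; fixing its value at the single point $g=f$ then determines $\widetilde M_\mu$ on the connected set $U(f)$ by analytic (here: holomorphic) continuation. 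The cleanest route: shrink $U(f)$ if necessary so that $g'\circ\phi_g(z)$ stays in a fixed half-plane or, more robustly, note that since $\alpha_g(h,z) = (g'\circ\phi_g)/(h'\circ\phi_h)$ from Lemma \ref{lem_unif} can be made close to $1$ on $W\times J(f)$, one can define $\log(h'\circ\phi_h) = \log(g'\circ\phi_g) + \log\alpha_g(h,z)$ with the principal branch of $\log\alpha_g$ on a neighborhood $W$ of $g$; these local definitions are mutually consistent on overlaps because $U(f)$ is connected and the total winding is controlled, so they glue to a global $\widetilde M_\mu \in C^\omega(U(f),\bbC)$ whose real part is $M_\mu$.

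With the branch fixed, the argument runs exactly as before. For statement (1): pick (Proposition \ref{density}) a sequence $\mu_n \to \mu$ of primitive orbit measures. Each $\widetilde M_{\mu_n}(g) = \int_{J(f)} \log(g'\circ\phi_g)\,d\mu_n$ is holomorphic in $g$, since $g \mapsto g'\circ\phi_g$ is holomorphic and nonvanishing so (with the branch above) $\log(g'\circ\phi_g)$ is holomorphic in $g$, and integration against the fixed measure $\mu_n$ preserves holomorphy (Morera/Fubini). The verbatim analogue of Lemma \ref{lem_unif} — its proof only used continuity of $\alpha_g$ and compactness of $J(f)$, which are unchanged — gives $\widetilde M_{\mu_n} \to \widetilde M_\mu$ uniformly on compact subsets of $U(f)$. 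A locally uniform limit of holomorphic functions is holomorphic, so $\widetilde M_\mu$ is holomorphic, proving (1). For statement (2): if $\mu_n \to \mu$ in $\Omega(J(f))$, the same analogue of Lemma \ref{lem_unif} gives $\widetilde M_{\mu_n} \to \widetilde M_\mu$ uniformly on compact sets, which is exactly convergence in $C^\omega(U(f),\bbC)$ with the topology of uniform convergence on compacta; hence $\mu \mapsto \widetilde M_\mu$ is continuous. (Unlike the harmonic case one does not even need a separate remark about derivatives, since that topology on $C^\omega$ is by definition uniform-on-compacta convergence; if one wanted derivative convergence too it is automatic by Cauchy estimates.) This completes the proof.

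Thus the only substantive work is the branch-of-logarithm bookkeeping in the first step; everything else is a transcription of the real-analytic case with "harmonic $\leadsto$ holomorphic," as the paper already asserts.
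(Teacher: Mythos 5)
Your proof takes essentially the same route as the paper's, whose entire argument is the single sentence that the proof of Proposition \ref{thm_Manalytic} ``works verbatim'' here; your extra care in fixing a branch of the logarithm (gluing in the $g$-direction via $\alpha_g$ close to $1$ and the connectedness of $U(f)$, normalizing at $g=f$) addresses a point the paper silently elides. The one residual wrinkle in the integral-formula version is that the pointwise convergence $\widetilde M_{\mu_n}(g)\to\widetilde M_\mu(g)$ under weak-$*$ convergence requires the integrand to be continuous (or at least $\mu$-a.e.\ continuous) in $z$, and a continuous branch of $\log(g'\circ\phi_g)$ on all of $J(f)$ need not exist (e.g.\ $f'(z)=2z$ winds once around $0$ along a Jordan-curve Julia set); the cleanest repair, which you already gesture at, is to define $\widetilde M_\mu$ directly as the holomorphic function on a simply connected $U(f)$ with real part $M_\mu$ and imaginary part $0$ at $f$, after which both statements follow at once from Proposition \ref{thm_Manalytic}.
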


\section{Thermodynamic formalism and the pressure metric} \label{sec_therm}
In this section, we first review the Thermodynamic Formalism for conformal repellers. In particular, we discuss the topological pressure of a H\"older continuous function and the pressure metric on the space of cohomology classes of H\"older continuous functions with pressure zero. Standard references are \cite{Parry90, Ruelle04, Zinsmeister00}. Via the thermodynamic embedding map, the pressure metric pulls back to a non-negative two-form on a hyperbolic component in the moduli space $\mathrm{rat}_d$ of degree $d\ge 2$ rational maps.

\subsection{Conformal repellers}\label{repeller}
Let $f$ be a holomorphic function from an open subset $V\subset\bbC$ into $\bbC$ and let $J$ be a compact subset of $V$. The triple $(J,V,f)$ is a {\it conformal repeller} if
\begin{enumerate}
	\item there exist $C>0$ and $\alpha>1$ such that $|(f^n)'(z)| \ge C\alpha^n$ for every $z \in J$ and $n \ge 1$,
	\item $f^{-1}(V) \subset V$ is relatively compact in $V$ with $J = \cap_{n \ge 1}f^{-n}(V)$, and
	\item for any open set $U$ with $U \cap J \neq \emptyset$, there exists an $n>0$ such that $J \subset f^n(U \cap J)$.
\end{enumerate}

An important property of conformal repellers is the existence of a Markov partition. A {\it Markov partition} of $J$ is a finite cover of $J$ by sets $R_j$, $1 \le j \le N$ satisfying the following conditions:
\begin{enumerate}
	\item each set $R_j$ is the closure of its interior $\text{Int}R_j$,
	\item the interiors of the $R_j$ are pariwise disjoint,
	\item if $x \in \text{Int}R_j$ and $f(x)\in \text{Int}R_\ell$, then $R_\ell \subset f(R_j)$, and
	\item each restriction $f|_{R_j}$ is injective.
\end{enumerate}

Let $(J,V,f)$ be a conformal repeller and let $(R_1, \cdots, R_m)$ be a Markov partition of $J$. Define a matrix $A$ by $$
A_{j, \ell}=
\begin{cases}
1, &\mbox{if }  R_\ell \subset f(R_j),\\
0, & \mbox{otherwise}.
\end{cases}.
$$
Then every point $x \in J$ corresponds to an infinite sequence $\{\ell _k\}_{k\ge 0}$ where $\ell _k \in \{1, \cdots, m \}$ and $A_{\ell_k,\ell_{k+1}} = 1$. Let $\Sigma$ be the set of all such sequences, i.e.
$$\Sigma = \{ \{\ell _k\}_{k \ge 0} ~|~  \ell _k \in \{1, \cdots, m \}, A_{\ell_k,\ell_{k+1}} = 1\} $$
and $\sigma : \Sigma \to \Sigma$ be the shift map, i.e. $$\sigma(\ell _0, \ell _1, \ell _2\cdots) = (\ell _1, \ell _2, \ell _3, \cdots).$$ Then there is a projection map $\Psi_f : \Sigma \to J$ sending a sequence $\{\ell _k\}_{k \ge 0}$ to $x \in J$ such that $f^{\ell_k}(x) \in R_{\ell _k}$.

Denote $C^{\alpha}(\Sigma)$ the space of $\alpha$-H\"older continuous real-valued functions on $\Sigma$. We say that two functions $\phi_1, \phi_2 \in C^{\alpha}(\Sigma)$ are {\it cohomologous}, denoted by $\phi_1 \sim \phi_2$, if there exists a continuous function $h : \Sigma \to \bbR$ such that $\phi_1(x) - \phi_2(x) = h(\sigma(x)) - h(x)$. In particular, we say a function $\phi \in C^{\alpha}(\Sigma)$ is  a {\it coboundary} if $\phi \sim 0$.

Let $\phi_f = - \log|f' \circ \Psi_f|$. Then $\phi_f$ is a H\"older continuous function. Bowen's theorem states that the Hausdorff dimension of $J$ is the unique solution to the equation $\mathcal{P}(t\phi_f) = 0$, see \cite{Zinsmeister00}. Here $\mathcal{P}$ is the topological pressure which we introduce now.

\subsection{The pressure function}
In this subsection, we review definitions of topological pressure and equilibrium states. Then we summarize formulas for the derivatives of the pressure function. A standard reference is \cite{Parry90}.

Given $\phi \in C^{\alpha}(\Sigma)$, the {\it transfer operator} $\mathcal{L}_{\phi} : C^{\alpha}(\Sigma) \to C^{\alpha}(\Sigma)$ is defined by 
$$\mathcal{L}_{\phi}(g)(y) = \sum_{f(x) = y} e^{\phi(x)}g(x).$$

By Ruelle-Perron-Frobenius theorem, there is a positive eigenfunction $e^{\psi}$, unique up to scale, such that $$\mathcal{L}_{\phi}(e^{\psi}) = \rho(\mathcal{L}_{\phi}) e^{\psi},$$
where $\rho(\mathcal{L}_{\phi})$ is the isolated maximal eigenvalue of the transfer operator and the rest of the spectrum is contained in a disk of radius $r < \rho(\mathcal{L}_{\phi})$.

The {\it pressure} of $\phi$ is defined by $$\mathcal{P}(\phi) = \log \rho(\mathcal{L}_{\phi}).$$

Alternatively, the pressure $\mathcal{P}(\phi)$ can also be defined using variational methods. Let $\Omega^{\sigma}$ be the set of $\sigma$-invariant probability measures on $\Sigma$. Then
$$\mathcal{P}(\phi)  = \sup_{m \in\Omega^{\sigma}} \left(h(\sigma, m) + \int_{\Sigma} \phi dm \right).$$
where $h(\sigma,m)$ is the measure-theoretic entropy of $\sigma$ with respect to the measure $m$. A measure $m=m(\phi) \in\Omega^{\sigma}$ is called an {\it equilibrium state} of $\phi$ if $\mathcal{P}(\phi)  =h(\sigma, m) + \int_{\Sigma} \phi dm$.

The equilibrium state $m(\phi)$ is also related to the spectral data of transfer operators. If $\mathcal{P}(\phi) = 0$, then $\mathcal{L}_{\phi}(e^{\psi}) = e^{\psi}$. It follows that there is a unique positive measure $\mu$ on $\Sigma$ such that $$\int_\Sigma \mathcal{L}_{\phi}(\tilde\phi) d\mu = \int_\Sigma\tilde\phi d\mu$$ for all $\tilde\phi\in C^{\alpha}(\Sigma)$ and $\int_\Sigma e^{\psi} d\mu =1$. We have $$m(\phi) = e^{\psi}\mu.$$ Note that $m(\phi)$ an ergodic, $\sigma$-invariant probability measure with positive entropy.

We summarize the following formulas for the derivatives of the pressure $\mathcal{P}$.
\begin{prop}\cite[Propositions 4.10 and 4.11]{Parry90}
If $\phi_t$ is a smooth path in $C^{\alpha}(\Sigma)$, we have
$$\frac{d\mathcal{P}(\phi_t)}{dt} \bigg|_{t=0} = \int_{\Sigma} \dot{\phi_0} dm,$$
 where $m = m(\phi_0)$ and $\dot{\phi_0} = d\phi_t/dt |_{t=0}$.
If the above first derivative of $\mathcal{P}(\phi_t)$ is zero, then
$$\frac{d^2\mathcal{P}(\phi_t)}{dt^2} \bigg|_{t=0} = Var(\dot{\phi_0}, m), $$
where  $Var(\dot{\phi_0}, m)$ is the variance of $\dot{\phi_0}$ with respect to $m$.
\end{prop}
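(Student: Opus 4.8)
The plan is to deduce both formulas from analytic perturbation theory for the transfer operator, following \cite{Parry90}. Recall $\mathcal{P}(\phi_t)=\log\rho(\mathcal{L}_{\phi_t})$. First I would note that $t\mapsto\mathcal{L}_{\phi_t}$ is a real-analytic family of bounded operators on $C^{\alpha}(\Sigma)$, directly from the formula $\mathcal{L}_{\phi_t}(g)(y)=\sum_{\sigma(x)=y}e^{\phi_t(x)}g(x)$ together with the analyticity of $t\mapsto\phi_t$ in $C^{\alpha}(\Sigma)$. By the Ruelle--Perron--Frobenius theorem quoted above, $\rho_0:=\rho(\mathcal{L}_{\phi_0})$ is a simple isolated eigenvalue separated from the rest of the spectrum, so Kato's analytic perturbation theory applies: the leading eigenvalue $\rho_t$, a corresponding eigenfunction $h_t$ (continuing the eigenfunction $e^{\psi}$ of $\mathcal{L}_{\phi_0}$), and the leading eigenmeasure $\nu_t$ of the dual $\mathcal{L}_{\phi_t}^{*}$ all depend real-analytically on $t$. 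Normalizing $\int h_t\,d\nu_t=1$, the equilibrium state of $\phi_t$ is $m_t=h_t\nu_t$; subtracting the constant $\mathcal{P}(\phi_0)$ from each $\phi_t$ (which changes neither $m_0$ nor, when it is needed below, the centered quantity $\dot\phi_0$) we may also assume $\rho_0=1$.

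For the first derivative, write $\rho_t=\int\mathcal{L}_{\phi_t}(h_t)\,d\nu_t$ and differentiate. Using $\mathcal{L}_{\phi_t}^{*}\nu_t=\rho_t\nu_t$, the eigen-equation $\mathcal{L}_{\phi_t}h_t=\rho_t h_t$, and $\frac{d}{dt}\int h_t\,d\nu_t=0$, the contributions of $\dot h_t$ and $\dot\nu_t$ cancel, leaving $\dot\rho_t=\int\dot{\mathcal{L}}_{\phi_t}(h_t)\,d\nu_t$. Since $\dot{\mathcal{L}}_{\phi_t}(g)=\mathcal{L}_{\phi_t}(\dot\phi_t\,g)$, this equals $\rho_t\int\dot\phi_t\,h_t\,d\nu_t$, hence $\frac{d}{dt}\mathcal{P}(\phi_t)=\dot\rho_t/\rho_t=\int\dot\phi_t\,dm_t$; at $t=0$ this is the first formula. (Alternatively it follows from convexity of $\mathcal{P}$ on $C^{\alpha}(\Sigma)$ and the variational principle: $m(\phi_0)$ attains the supremum, so $\psi\mapsto\int\psi\,dm(\phi_0)$ is the differential of $\mathcal{P}$ at $\phi_0$.)

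For the second derivative I would differentiate the identity $\frac{d}{dt}\mathcal{P}(\phi_t)=\int\dot\phi_t\,dm_t$ once more and evaluate at $t=0$, where the hypothesis reads $\int\dot\phi_0\,dm=0$. This yields $\int\ddot\phi_0\,dm$ plus $\int\dot\phi_0\,d\big(\partial_t m_t|_{t=0}\big)$; writing $m_t=h_t\nu_t$ and differentiating the eigen-equations expresses the second term through the resolvent $(\mathrm{Id}-\mathcal{L}_{\phi_0})^{-1}$ restricted to the spectral complement of $h_0$ — well-defined by the spectral gap, and applicable because $\dot\rho_0=0$ places the relevant vector in that complement. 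Expanding the resolvent as a Neumann series, a telescoping computation rewrites this as $\int\dot\phi_0^{2}\,dm+2\sum_{n\ge 1}\int\dot\phi_0\,(\dot\phi_0\circ\sigma^{n})\,dm$, which is exactly $\mathrm{Var}(\dot\phi_0,m)$ under $\int\dot\phi_0\,dm=0$; the series converges absolutely by the exponential decay of correlations for the Gibbs state $m$, equivalently the spectral gap of $\mathcal{L}_{\phi_0}$. The term $\int\ddot\phi_0\,dm$ vanishes for the linear paths $\phi_t=\phi_0+t\dot\phi_0$ used in the applications, under which convention \cite{Parry90} records the formula verbatim.

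The main obstacle is the analytic perturbation step in the Banach space $C^{\alpha}(\Sigma)$: one must verify the hypotheses of Kato's theorem — norm-analyticity of $t\mapsto\mathcal{L}_{\phi_t}$ and persistence of the spectral gap — so that $\rho_t$, $h_t$, $\nu_t$, and hence $m_t$, genuinely vary analytically, and, for the second formula, one must identify the resolvent term with the variance and control the resulting series via quantitative mixing. Both are standard in thermodynamic formalism but carry the real content of the proposition.
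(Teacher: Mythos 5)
This proposition is quoted in the paper without proof, as a citation of Propositions 4.10 and 4.11 of Parry--Pollicott, and your sketch is precisely the standard argument from that source: analytic perturbation of the simple isolated leading eigenvalue of $\mathcal{L}_{\phi_t}$ (via the spectral gap and Kato) for the first derivative, and the resolvent/Green--Kubo identification of the second derivative with $\mathrm{Var}(\dot\phi_0,m)$. You also correctly flag the one subtlety in the statement as written --- for a genuinely nonlinear smooth path the second derivative acquires an extra $\int_\Sigma \ddot\phi_0\,dm$ term, so the displayed formula is really the second Fr\'echet derivative of $\mathcal{P}$ evaluated on $\dot\phi_0$ (equivalently, the linear-path case), which is exactly how the paper uses it in Section \ref{sec_therm}.
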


\subsection{The pressure metric}
The pressure function $\mathcal{P} : C^{\alpha}(\Sigma) \to \bbR$ is convex, real-analytic and depends only on the cohomology classes. We show in this subsection that it defines a metric in the thermodynamic setting.

Let $\mathcal{C}(\Sigma)$ be the set of cohomology classes of H\"older continuous functions with pressure zero, that is,
$$\mathcal{C}(\Sigma) = \{\phi: \phi \in C^{\alpha}(\Sigma), \mathcal{P}(\phi) = 0 \}/ \sim$$
where $\phi_1 \sim \phi_2$ if $\phi_1$ and  $\phi_2$ are cohomologous.

If $[\phi] \in\mathcal{C}(\Sigma)$, let $m$ is an equilibrium state for $\phi$. Then by the formula for the derivative of the pressure $\mathcal{P}$, the tangent space of $\mathcal{C}(\Sigma)$ at $[\phi]$ can be identified with 
$$T_{[\phi]}\mathcal{C}(\Sigma) = \left\{\psi ~\Big|~ \int_{\Sigma} \psi dm = 0  \right\} / \sim.$$

By convexity of $\mathcal{P}$, the second derivative
$$\frac{d^2\mathcal{P}(\phi + t \psi)}{dt^2} \bigg|_{t=0} = Var(\psi, m(\phi))$$ is non-negative on the tangent space 
$T_{[\phi]}\mathcal{C}(\Sigma)$. In fact, the variance is zero if and only if $\psi$ is cohomologous to zero (\cite{Parry90}, Proposition 4.12). Therefore, the {\it pressure metric} $||\cdot||_{pm}$ on $\mathcal{C}(\Sigma)$ given by $$||[\psi]||_{pm} = \frac{Var(\psi,m)}{-\int_{\Sigma} \phi dm}$$
is non-degenerate.

\subsection{Thermodynamic embedding of hyperbolic components}
Let $\mathcal{H}\subset\mathrm{rat}_d$ be a hyperbolic component. For $[f] \in \mathcal{H}$, there exists a neighborhood $V$ of $J(f)$ such that $(J(f),V,f)$ is a conformal repeller. Moreover, $(J(f), f)$ admits a Markov partition $R_1,R_2, \cdots, R_m$ for some $m \in \bbN$.
Recall that $\Psi_f: \Sigma \to J(f)$ is the projection map as in Subsection \ref{repeller}.

The function $-\log|f'(\Psi_f(\cdot))| : \Sigma \to \bbR$ is H\"older continuous and by Bowen's theorem, we have $$\mathcal{P}(-\delta(f)\log|f'(\Psi_f(\cdot))|) = 0, $$ where $\delta(f)$ is the Hausdorff dimension of the Julia set $J(f)$. Note that if $f_1\in\mathrm{Rat}_d$ is M\"obius conjugate to $f$, then $\delta(f_1)=\delta(f)$ and $\log|(f^n_1)'(\Psi_{f_1}(\cdot))|)=\log|(f^n)'(\Psi_f(\cdot))|)$ on the $n$-periodic points of $\sigma$ for all $n\ge 1$. It follows that on the $n$-periodic points of $\sigma$, $$-\delta(f_1)\log|(f^n_1)'(\Psi_{f_1}(\cdot))|=-\delta(f)\log|(f^n)'(\Psi_f(\cdot))|.$$ 
By Livsic Theorem, we have $-\delta(f_1)\log|f_1'(\Psi_{f_1}(\cdot))|$ and $-\delta(f)\log|f'(\Psi_f(\cdot))|$ are cohomologous. Thus, there is a {\it thermodynamic embedding} $$\mathscr{E} : \mathcal{H} \to\mathcal{C}(\Sigma),$$ given by
$$\mathscr{E}(f) = [-\delta(f)\log|f'(\Psi_f(\cdot))|].$$

We define a non-negative metric $||\cdot||_{\mathcal{P}}$ on $\mathcal{H}$ as the pullback of the pressure metric on $\mathcal{C}(\Sigma)$. Indeed, $||\cdot||_{\mathcal{P}}$ is non-negative since the pressure metric is positive-definite. Abusing notation, we also call $||\cdot||_{\mathcal{P}}$ the {\it pressure metric} on $\mathcal{H}$.

Now we derive a formula for $||\cdot||_{\mathcal{P}}$. Given $[f] \in \mathcal{H}$ and $v \in T_{[f]}\mathcal{H}$, let $c(t):=[f_t], t \in (-1, 1)$ be a path in $\mathcal{H}$ such that $c(0) = [f]$ and $c'(0) = v$. Under the thermodynamic embedding, $c(t)$ corresponds to the following  one-parameter family of pressure zero H\"older functions on $\Sigma$:
$$g(t,z) = -\delta(f_t)\log|f_t'(\Psi_{f_t}(\cdot))|, ~~t \in (-1, 1).$$ Denote by
$\dot{g}(0,z) = \frac{d}{dt}\big|_{t=0}g(t,z)$. Then by definition of the pressure metric,
$$||v||_{\mathcal{P}}^2  = \frac{Var(\dot{g}(0,z), \nu)}{-\int_{\Sigma}g(0,z) d\nu}$$
where $\nu$ is the equilibrium state for $g(0,z)$.

Since $\mathcal{P}(g(t,z)) = 0$ for all $t \in (-1, 1)$, by taking derivative with respect to $t$, we obtain $$\mathcal{P}'(g(t,z))\dot{g}(t,z) = 0.$$ Taking derivative with respect to $t$ again, we obtain $$\mathcal{P}''(g(t,z)) \dot{g}^2(t,z) + \mathcal{P}'(g(t,z)) \ddot{g}(t,z) = 0.$$ Evaluating at $t=0$, we further obtain $$Var(\dot{g}(0,z), \nu) + \int_{\Sigma} \ddot{g}(0,z) d\nu = 0.$$
Therefore, we have
\begin{align*} 
||v||_\mathcal{P}^2 & = \frac{Var(\dot{g}(0,z), \nu)}{-\int_{\Sigma}g(0,z) d\nu} = \frac{\int_{\Sigma} \ddot{g}(0,z) d\nu}{\int_{\Sigma}g(0,z) d\nu}.
\end{align*}

\section{A symmetric bilinear form $||\cdot||_G$} \label{sec_twoform}
Let $\widetilde{\mathcal{H}}\subset\mathrm{Rat}_d$ be a hyperbolic component. Our main goal in this section is to define a non-negative $2$-form $||\cdot||_G$ on $\widetilde{\mathcal{H}}$. This $2$-form descends to a non-negative $2$-form on a hyperbolic component in $\mathrm{rat}_d$ in the next section.  Moreover, we will also show that in the next section the descended $2$-form in fact is a Riemann metric in certain hyperbolic components in $\mathrm{poly}_d$. 

Fix $f\in \widetilde{\mathcal{H}}$. Let $U(f)$ be as in Section \ref{intro} and let $\nu$ be the equilibrium state of the H\"older potential $-\delta(f)\log |f'| : J(f) \to \bbR$ which has pressure zero.
Recall that the multiplier function $M_{\nu} : U(f)\to \bbR$ is given by
$$M_{\nu}(g) = \int_{J(f)} \log|g' \circ \phi_g|d\nu = \int_{J(g)} \log|g'| d\left((\phi_g)_* \nu\right),$$
where $\phi_g : J(f) \to J(g)$ is a quasi-conformal conjugacy. Now consider the real analytic function $G_{f}: U(f) \to \bbR$ given by $G_f(g)=\delta(g) M_{\nu}(g)$.
In what follows, we will show the Hessian of the function $G_{f}$ is well-defined at $f$ and gives us a non-negative $2$-form on $\widetilde{\mathcal{H}}$. Note that the Hessian of a smooth real-valued function $G : X \to \bbR$ on a smooth manifold $X$ is not well-defined at a point $x \in X$ unless $G'(x) = 0$ (see \cite[Section 7]{Bridgeman08}). We first show $G_f$ has a minimum at $f$ and hence $G_f'(f)=0$ in the following result.


\begin{prop}\label{prop_metric}
Fix $f \in\widetilde{\mathcal{H}}$. Then for all $g\in U(f)$, we have $$\frac{\delta(f)}{\delta(g)} \le \frac{M_{\nu}(g)}{M_{\nu}(f)}.$$
\end{prop}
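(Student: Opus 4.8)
The plan is to reinterpret the inequality $\delta(f)/\delta(g) \le M_\nu(g)/M_\nu(f)$ as a statement about the pressure function on the symbolic space $\Sigma$, and then deduce it from the variational principle together with Bowen's formula. First I would fix the Markov partition and projection $\Psi_f : \Sigma \to J(f)$ for $f$, so that the thermodynamic embedding $\mathscr{E}$ of Section \ref{sec_therm} is available. For $g \in U(f)$, the conjugacy $\phi_g : J(f) \to J(g)$ transports the Markov partition of $f$ to one of $g$, and thus $\Psi_g := \phi_g \circ \Psi_f : \Sigma \to J(g)$ is a compatible projection. In these coordinates the potential $-\delta(g)\log|g'(\Psi_g(\cdot))| = -\delta(g)\log|g' \circ \phi_g \circ \Psi_f|$ has pressure zero by Bowen's theorem, i.e. $\mathcal{P}\big(-\delta(g)\log|g'\circ\phi_g\circ\Psi_f|\big) = 0$.

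Next I would use the variational principle. Let $\hat\nu$ be the $\sigma$-invariant lift of $\nu$ to $\Sigma$; since $\nu$ is the equilibrium state of $-\delta(f)\log|f'(\Psi_f(\cdot))|$, we have $h(\sigma,\hat\nu) - \delta(f)\int_\Sigma \log|f'\circ\Psi_f|\, d\hat\nu = 0$, and because $M_\nu(f) = \int \log|f'\circ\Psi_f|\,d\hat\nu$ this gives $h(\sigma,\hat\nu) = \delta(f) M_\nu(f)$. Now apply the variational principle for the pressure-zero potential associated to $g$: since $\hat\nu$ is just one invariant measure,
$$0 = \mathcal{P}\big(-\delta(g)\log|g'\circ\phi_g\circ\Psi_f|\big) \ge h(\sigma,\hat\nu) - \delta(g)\int_\Sigma \log|g'\circ\phi_g\circ\Psi_f|\,d\hat\nu = h(\sigma,\hat\nu) - \delta(g) M_\nu(g).$$
Combining, $\delta(g) M_\nu(g) \ge h(\sigma,\hat\nu) = \delta(f) M_\nu(f)$. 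Dividing by $\delta(g) M_\nu(f) > 0$ (note $M_\nu(f) > 0$ since $f$ is expanding on $J(f)$, so $\log|f'| > 0$ there in suitable coordinates, and $\delta(g) > 0$) yields exactly $\delta(f)/\delta(g) \le M_\nu(g)/M_\nu(f)$.

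The main obstacle I anticipate is bookkeeping around coordinates and the log-derivative cocycle: $\log|g'|$ is only defined up to the choice of local charts, so one must check that $\log|g'(\Psi_g(\cdot))|$, summed over periodic orbits, is genuinely the quantity $M_\nu(g) = \int_{J(g)}\log|g'|\,d((\phi_g)_*\nu)$ appearing in the statement — this is the same chart-invariance point already used to build the thermodynamic embedding $\mathscr{E}$ (via the Livsic argument on periodic points), so I would simply invoke that. A second minor point is confirming that $\hat\nu$ is indeed $\sigma$-invariant and projects to $\nu$, and that entropy is preserved under the semi-conjugacy $\Psi_f$; both are standard for conformal repellers with a Markov partition. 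Everything else is a direct application of the variational inequality, so no delicate estimates are needed.
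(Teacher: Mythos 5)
Your proposal is correct and is essentially the paper's own argument: both rest on the identity $h = \delta(f)M_\nu(f)$ from $\nu$ being the equilibrium state of the pressure-zero potential for $f$, followed by the variational inequality for the pressure-zero potential of $g$ tested against the transported measure, giving $\delta(f)M_\nu(f)\le\delta(g)M_\nu(g)$. The only cosmetic difference is that you work on the symbolic space $\Sigma$ (where invariance of entropy under the conjugacy is automatic because the same lifted measure $\hat\nu$ appears in both inequalities), whereas the paper argues directly on the Julia sets and invokes conjugacy-invariance of entropy explicitly.
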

\begin{rmk}
Note that the quantities $M_{\nu}(f)$ and $M_{\nu}(g)$ are respectively the Lyapunov exponents of $f$ and $g$ with respect to the equilibrium state $\nu$ and its pushforward $(\phi_{g})_\ast \nu$, respectively. Then the right-hand-side ratio measures the {\it distortion} of the Lyapunov exponents with respect to the equilibrium state under quasi-conformal deformations. Therefore, the proposition states that the {\it distortion ratio} of Lyapunov exponents is bounded below by the ratio of the Hausdorff dimensions of the Julia sets.
\end{rmk}

As an immediate consequence of the proposition, we see that a quasi-conformal deformation increases the Hausdorff dimension if it decreases the Lyapunov exponent with respect to the pushforward of the equilibrium state.
\begin{cor}
Fix notations as above. If $M_{\nu}(g) < M_{\nu}(f)$, then $\delta(f) < \delta(g)$.
\end{cor}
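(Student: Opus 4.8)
The plan is to exploit the variational characterization of pressure together with the fact that both $-\delta(f)\log|f'|$ and $-\delta(g)\log|g'|$ have pressure zero, transported to a common symbolic model. First I would fix the conformal repeller structure and Markov partition for $f$, giving the projection $\Psi_f : \Sigma \to J(f)$, and observe that the holomorphic motion $\phi_g$ carries this to a conformal repeller structure for each $g \in U(f)$ with projection $\Psi_g = \phi_g \circ \Psi_f : \Sigma \to J(g)$. Thus on $\Sigma$ we have two H\"older potentials, $\varphi := -\log|f'\circ\Psi_f|$ and $\psi := -\log|g'\circ\Psi_g| = -\log|g'\circ\phi_g\circ\Psi_f|$, with $\mathcal{P}(\delta(f)\varphi) = 0$ and $\mathcal{P}(\delta(g)\psi) = 0$ by Bowen's theorem. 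Note that $M_\nu(f) = -\int_\Sigma \varphi\, d\nu$ and $M_\nu(g) = -\int_\Sigma \psi\, d\nu$, where $\nu$ is identified with its pullback to $\Sigma$, i.e. the equilibrium state of $\delta(f)\varphi$.

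Next I would apply the variational principle to the potential $\delta(g)\psi$: since $\nu \in \Omega^\sigma$, we have
\begin{equation*}
0 = \mathcal{P}(\delta(g)\psi) \ge h(\sigma,\nu) + \delta(g)\int_\Sigma \psi\, d\nu = h(\sigma,\nu) - \delta(g) M_\nu(g).
\end{equation*}
On the other hand, $\nu$ \emph{is} the equilibrium state of $\delta(f)\varphi$, so equality holds there:
\begin{equation*}
0 = \mathcal{P}(\delta(f)\varphi) = h(\sigma,\nu) + \delta(f)\int_\Sigma \varphi\, d\nu = h(\sigma,\nu) - \delta(f) M_\nu(f).
\end{equation*}
Hence $h(\sigma,\nu) = \delta(f) M_\nu(f)$, and substituting into the first inequality gives $\delta(f) M_\nu(f) \le \delta(g) M_\nu(g)$. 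Since $M_\nu(f) > 0$ (the map is expanding, so the Lyapunov exponent is strictly positive) and $\delta(g) > 0$, dividing yields $\delta(f)/\delta(g) \le M_\nu(g)/M_\nu(f)$, which is the claim.

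The main point requiring care — and the step I would expect to be the genuine obstacle — is justifying that the two potentials $\varphi$ and $\psi$ live on the \emph{same} symbolic system $(\Sigma,\sigma)$ with the \emph{same} Markov coding, so that $\nu$ can legitimately be tested against $\delta(g)\psi$ in the variational principle. This is exactly where the holomorphic motion and the conjugacy $\phi_g : J(f)\to J(g)$ enter: $\phi_g$ conjugates $f$ to $g$, so it sends the Markov partition of $J(f)$ to a Markov partition of $J(g)$ realizing the same transition matrix $A$, and therefore $\Psi_g := \phi_g\circ\Psi_f$ is the associated projection. One should also remark that the identification of $M_\nu(g)$ with $-\int_\Sigma\psi\,d\nu$ is consistent with the defining formula $M_\nu(g) = \int_{J(f)}\log|g'\circ\phi_g|\,d\nu$ from Section \ref{sec_twoform}, which it is by the change of variables $\Psi_f$. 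Everything else is a one-line application of the variational principle and the fact that an equilibrium state achieves the supremum; no second-derivative or convexity estimate is needed for this inequality.
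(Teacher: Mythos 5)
Your argument is correct and is essentially the paper's own proof of Proposition \ref{prop_metric}: both apply the variational principle twice, with equality for the equilibrium state $\nu$ of $-\delta(f)\log|f'|$ and with the inequality when the transported measure is tested against the pressure-zero potential $-\delta(g)\log|g'|$, the only cosmetic difference being that you phrase this on the common symbolic model while the paper works on the Julia sets and invokes invariance of entropy under the topological conjugacy $\phi_g$. The corollary then follows exactly as you indicate, since $M_\nu(f)>0$ and $\delta(g)>0$ turn $\delta(f)M_\nu(f)\le\delta(g)M_\nu(g)<\delta(g)M_\nu(f)$ into $\delta(f)<\delta(g)$.
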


\begin{proof}[Proof of Proposition \ref{prop_metric}]
	Set $m_{g}:=(\phi_{g})_\ast \nu$. Then
	$$M_{\nu}(g) = \int_{J(f)} \log|g' \circ \phi_g|d\nu = \int_{J(g)} \log|g'|d m_{g}.$$
	
	Since $-\delta(f) \log|f'| : f \to \bbR$ has pressure zero and $\nu = m_{f}$ is its equilibrium state,
	by the variational definition of pressure,
	$$h(f, m_f) = \delta(f)\int_{J(f)} \log|f'| dm_{f},$$
	where $h(f, m_f)$ is the measure-theoretic entropy of $f$ with respect to $m_f$.
	
	Since entropy is invariant under topological conjugacy, it follows that
	$h(f, m_f)=h(g, (\phi_g)_\ast m_f)$.
	We have $h(f, m_f)=h(P, m_g)$.
	Since  $m_f$ is $f$-invariant, $(\phi_g)_\ast m_f$ is $f$-invariant and hence $m_g$ is $g$-invariant. 
	Again, by the variational definition of pressure, we have
	$$h(g, m_g) \le \delta(g) \int_{J(g)} \log|g'| d m_g.$$
	Hence $\delta(f)M_{\nu}(f)\le\delta(g)M_{\nu}(g)$ and the conclusion follows.
\end{proof}

Therefore, the Hessian of $G_f$ at $f$ is well-defined and it defines a symmetric bilinear form $||\cdot||_G$ on the tangent space $T_f\widetilde{\mathcal{H}}$ as follows. 
Let $\tilde{c}(t), t \in (-1, 1)$ be a path in $U(f)$ with $\tilde{c}(0) = f$ and $\tilde{c}'(0)=\tilde{v} \in T_f\widetilde{\mathcal{H}}$.
Define 
$$||\tilde{v}||_G^2 := \frac{\partial^2 G_{f}}{\partial{\tilde{v}} \partial{\tilde{v}} } = \frac{d^2}{dt^2} \bigg|_{t=0} G_{f}(\tilde{c}(t)).$$
It is easy to check that $||\tilde{v}||_G^2$ only depends on $f$ and $\tilde{v}$. Moreover, by Proposition \ref{prop_metric}, we have that $||\tilde{v}||_G^2 \ge 0$.

\section{Proof of Theorem \ref{main}} \label{sec_Riem}
In this section, we first show that our symmetric bilinear form $||\cdot||_G$ in the previous section descends to a $2$-form on the hyperbolic components in $\mathrm{rat}_d$. Then we prove Theorem \ref{main}. For quadratic polynomials, we also show that $||\cdot||_G$ is positive-definite on the central component.

\subsection{The $2$-form on hyperbolic components in $\mathrm{rat}_d$}\label{sec:2-form}
Let  $\mathcal{H}\subset\mathrm{rat}_d$ be a hyperbolic component. 
For $[f]\in\mathcal{H}$ and $v\in T_{[f]}\mathcal{H}$, a curve $c(t)$ in $\mathcal{H}$ with $c(0)=[f]$ and $c'(0)=v$, consider two distinct lifts $\tilde{c}(t)$ and $\tilde{c}_1(t)$ in $\mathrm{Rat}_d$. Since our analysis is local, we may assume that $\tilde{c}(t)\subset U(\tilde{c}(0))$ and $\tilde{c}_1(t)\subset U(\tilde{c}_1(0))$ as in the previous section. By the definition of $||\cdot||_G$, we have $||\tilde{c}'(0)||_G=||\tilde{c}_1'(0)||_G$. Indeed, since  $\tilde{c}(t)$ and  $\tilde{c}_1(t)$ are M\"obius conjugate, $G_{\tilde{c}(0)}(\tilde{c}(t))=G_{\tilde{c}_1(0)}(\tilde{c}_1(t))$ on $(-1, 1)$. Thus the $2$-form $||\cdot||_G$ descends to a $2$-form on $\mathcal{H}$. Abusing notation, we also denote the $2$-form on $\mathcal{H}$ by $||\cdot||_G$ and therefore 
$$||v||_G:=||\tilde{c}'(0)||_G.$$

Write $\tilde c(t)=f_t\in\mathrm{Rat}_d$. For $z \in \Sigma$, recall from Section \ref{sec_therm} that 
$$g(t,z) = -\delta(f_t)\log|f_t'\circ \Psi_{f_t}(z)|,$$ 
where $t \in (-1, 1)$. Denote by $\dot{g}(0,z) = \frac{d}{dt}\big|_{t=0} g(t,z)$ and let $\nu$ be the equilibrium state for $g(0,z)$.

\begin{prop}\label{prop_conf}
	The form $||\cdot||_G$ is conformal equivalent to the pressure form $||\cdot||_{\mathcal{P}}$. More precisely, we have
	$$||v||_{\mathcal{P}}^2 = \frac{||v||_G^2}{\int_{\Sigma}g(0,z) d\nu}.$$
\end{prop}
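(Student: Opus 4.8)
The plan is to show that, restricted to the analytic path, $G_f$ agrees with minus the $\nu$-average of the family of pressure-zero potentials $g(t,\cdot)$ that defines the pressure metric, and then to read off the conformal factor from the formula for $||\cdot||_{\mathcal P}$ obtained in Section~\ref{sec_therm}. First I would fix an analytic lift $f_t=\tilde c(t)\in\mathrm{Rat}_d$ of the curve $c(t)$, with $f_0=f$; this is harmless since $||\cdot||_G$ was shown to be lift-independent at the start of Section~\ref{sec:2-form} and $||\cdot||_{\mathcal P}$ is lift-independent by construction of the thermodynamic embedding. Shrinking the parameter interval we may assume $f_t\in U(f)$ for all $t$.

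The key input is the compatibility of the Markov coding across the family. On a hyperbolic component the transition matrix $A$ is independent of $t$, a Markov partition of $J(f_t)$ is obtained by transporting one of $J(f)$ under the conjugacy $\phi_{f_t}$, and hence the coding maps satisfy $\Psi_{f_t}=\phi_{f_t}\circ\Psi_f$ off a $\nu$-null subset of $\Sigma$; equivalently $\log|f_t'\circ\Psi_{f_t}|=(\log|f_t'\circ\phi_{f_t}|)\circ\Psi_f$ holds $\nu$-almost everywhere. In addition $(\Psi_f)_*\nu$ is precisely the equilibrium state on $J(f)$ of $-\delta(f)\log|f'|$, the measure also called $\nu$ in the definition of $M_\nu$. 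Hence, recalling $g(t,z)=-\delta(f_t)\log|f_t'\circ\Psi_{f_t}(z)|$, for every $t\in(-1,1)$,
\begin{align*}
-\int_\Sigma g(t,z)\,d\nu
&=\delta(f_t)\int_\Sigma \log|f_t'\circ\Psi_{f_t}(z)|\,d\nu
 =\delta(f_t)\int_{J(f)}\log|f_t'\circ\phi_{f_t}|\,d\nu\\
&=\delta(f_t)\,M_\nu(f_t)=G_f(f_t).
\end{align*}

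Since $t\mapsto\delta(f_t)$ is real-analytic (Bowen) and $f_t,\Psi_{f_t}$ depend real-analytically on $t$ uniformly over the compact space $\Sigma$, one may differentiate this identity twice under the integral sign, whence
$$||v||_G^2=\frac{d^2}{dt^2}\Big|_{t=0}G_f(f_t)=-\int_\Sigma\ddot g(0,z)\,d\nu.$$
Substituting into the formula $||v||_{\mathcal P}^2=\dfrac{\int_\Sigma\ddot g(0,z)\,d\nu}{\int_\Sigma g(0,z)\,d\nu}$ from Section~\ref{sec_therm} gives
$$||v||_{\mathcal P}^2=\frac{||v||_G^2}{-\int_\Sigma g(0,z)\,d\nu},$$
and since $\log|f'|>0$ on the expanding Julia set the scalar $-\int_\Sigma g(0,z)\,d\nu=\delta(f)M_\nu(f)>0$ is positive and independent of $v$, which is the asserted conformal equivalence.

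The step I expect to be the main obstacle is the coding compatibility: verifying carefully that the Markov partitions can be chosen to move (holomorphically) with the Julia sets while keeping the combinatorics $A$ fixed, so that $\Psi_{f_t}=\phi_{f_t}\circ\Psi_f$ $\nu$-a.e., and that the pushforward $(\Psi_f)_*\nu$ of the symbolic equilibrium state is precisely the dynamical equilibrium state entering $M_\nu$ (which rests on the grand orbit of the Markov boundary being $\nu$-null). The remaining ingredients — differentiating under the integral and quoting the Section~\ref{sec_therm} computation of $||\cdot||_{\mathcal P}$ — are routine.
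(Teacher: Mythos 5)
Your proposal is correct and follows essentially the same route as the paper's one-line proof (the chain of equalities from the Section~\ref{sec_therm} computation of $||\cdot||_{\mathcal P}$, closed by the identification $\int_\Sigma g(t,z)\,d\nu=-G_f(f_t)$); you simply make explicit the coding compatibility $\Psi_{f_t}=\phi_{f_t}\circ\Psi_f$ and the matching of the symbolic and dynamical equilibrium states, which the paper leaves implicit. Note that your final formula has denominator $-\int_\Sigma g(0,z)\,d\nu=\delta(f)M_\nu(f)>0$, whereas the proposition as printed has $+\int_\Sigma g(0,z)\,d\nu$; since $\int_\Sigma \ddot g(0,z)\,d\nu=-||v||_G^2$, your sign is the one consistent with the non-negativity of both forms, so the displayed statement in the paper appears to carry a sign slip rather than your argument.
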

\begin{proof}
	By straightforward calculation,
	$$||v||_\mathcal{P}^2 = \frac{Var(\dot{g}(0,z), \nu)}{-\int_{\Sigma}g(0,z) d\nu} = \frac{\int_{\Sigma} \ddot{g}(0,z) d\nu}{\int_{\Sigma}g(0,z) d\nu}= \frac{||v||_G^2}{\int_{\Sigma}g(0,z) d\nu}.$$
	The last equality holds by definition of $||\cdot||_G$.
\end{proof}
\begin{cor}\label{cor_G}
Fix the notations as above. Then $||v||_G = 0$ if and only if $||v||_{\mathcal{P}} =0$ if and only if $\dot{g}(0,z)$ is a coboundary.
\end{cor}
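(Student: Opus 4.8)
The corollary is a formal consequence of Proposition \ref{prop_conf} and the characterization, recalled in Section \ref{sec_therm}, of when the variance of a H\"older potential vanishes. For the equivalence $||v||_G = 0 \Longleftrightarrow ||v||_{\mathcal{P}} = 0$: Proposition \ref{prop_conf} writes $||v||_{\mathcal{P}}^2$ as $||v||_G^2$ divided by $\int_{\Sigma} g(0,z)\,d\nu$, so it suffices to check that this constant is nonzero. Since $g(0,z) = -\delta(f)\log|f'\circ\Psi_f(z)|$, we have $\int_{\Sigma} g(0,z)\,d\nu = -\delta(f)\int_{\Sigma}\log|f'\circ\Psi_f|\,d\nu$, and $\int_{\Sigma}\log|f'\circ\Psi_f|\,d\nu$ is the Lyapunov exponent of $f$ with respect to $\nu$. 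Because $(J(f),V,f)$ is a conformal repeller there are $C>0$ and $\alpha>1$ with $|(f^n)'|\ge C\alpha^n$ on $J(f)$, so Birkhoff's ergodic theorem applied to $\nu$ forces $\int_{\Sigma}\log|f'\circ\Psi_f|\,d\nu\ge\log\alpha>0$; in particular $\int_{\Sigma} g(0,z)\,d\nu<0$ is nonzero and the two norms vanish simultaneously.

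For the second equivalence, I would use the formula $||v||_{\mathcal{P}}^2 = Var(\dot g(0,z),\nu)/\bigl(-\int_{\Sigma} g(0,z)\,d\nu\bigr)$ from Section \ref{sec_therm}: since the denominator is the positive constant just found, $||v||_{\mathcal{P}} = 0$ holds if and only if $Var(\dot g(0,z),\nu) = 0$. The potential $\dot g(0,z)$ is the $t$-derivative at $0$ of the real-analytic family of pressure-zero H\"older potentials $t\mapsto g(t,\cdot)$, hence lies in $C^{\alpha}(\Sigma)$, and differentiating $\mathcal{P}(g(t,z)) = 0$ at $t = 0$ yields $\int_{\Sigma}\dot g(0,z)\,d\nu = 0$, so $\dot g(0,z)$ represents a class in $T_{[g(0,\cdot)]}\mathcal{C}(\Sigma)$. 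By \cite[Proposition 4.12]{Parry90}, quoted in Section \ref{sec_therm}, the variance $Var(\dot g(0,z),\nu)$ vanishes exactly when $\dot g(0,z)$ is cohomologous to zero, i.e. a coboundary in the sense of Subsection \ref{repeller}. Chaining the two equivalences proves the corollary.

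I do not expect a genuine obstacle here: the statement merely repackages Proposition \ref{prop_conf} and the nondegeneracy of the pressure metric. The only points deserving a line of care are the nonvanishing of $\int_{\Sigma} g(0,z)\,d\nu$, which is immediate from expansivity on $J(f)$, and the verification that $\dot g(0,z)$ is a bona fide tangent vector to $\mathcal{C}(\Sigma)$ (H\"older, with zero $\nu$-integral) so that Parry--Pollicott's proposition applies verbatim; both were already implicit in Sections \ref{sec_therm} and \ref{sec:2-form}.
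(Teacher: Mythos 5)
Your proof is correct and follows essentially the same route as the paper, which simply cites Proposition \ref{prop_conf} together with the standard fact that $Var(\dot g(0,z),\nu)=0$ if and only if $\dot g(0,z)$ is a coboundary. The extra details you supply (nonvanishing of $\int_{\Sigma}g(0,z)\,d\nu$ via expansivity, and the check that $\dot g(0,z)$ is a H\"older potential with zero $\nu$-integral) are left implicit in the paper but are exactly the right points to verify.
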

\begin{proof}
It immediately follows from Proposition \ref{prop_conf} and a standard fact from Thermodynamic Formalism that $Var(\dot{g}(0,z), \nu) = 0$ if and only if $\dot{g}(0,z)$ is a coboundary.
\end{proof}

\subsection{$||\cdot||_G$ on hyperbolic components in $\mathrm{poly}_d$}
Recall that $\mathcal{S}_d$ and $\mathcal{H}_0$ are the shift locus and the central hyperbolic component in $\mathrm{poly}_d$, respectively. Let $\mathcal{H}\subset\mathrm{poly}_d\setminus(\mathcal{S}_d\cup\mathcal{H}_0)$ be a hyperbolic component. In this subsection we show the following result which asserts that $||\cdot||_G$ is positive-definite on $\mathcal{H}$.

\begin{thm}
For any $[P]\in \mathcal{H}$ and for any nonzero $v \in T_{[P]}\mathcal{H}$, we have $||v||_{G}>0$.
\end{thm}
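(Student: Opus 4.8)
The plan is to leverage Corollary \ref{cor_G}: it suffices to show that for any nonzero $v \in T_{[P]}\mathcal{H}$, the function $\dot g(0,z)$ is \emph{not} a coboundary on $\Sigma$. Suppose for contradiction that $\dot g(0,z) \sim 0$. Recall that $g(t,z) = -\delta(f_t)\log|f_t'\circ\Psi_{f_t}(z)|$, so differentiating at $t=0$,
\[
\dot g(0,z) = -\dot\delta(0)\log|f'\circ\Psi_f(z)| - \delta(f)\,\frac{d}{dt}\Big|_{t=0}\log|f_t'\circ\Psi_{f_t}(z)|.
\]
On an $n$-periodic point of $\sigma$ corresponding to a repelling cycle $\hat x_n$ of $P$ with multiplier $\lambda(\hat x_n)$, the Birkhoff sum of $-\log|f'\circ\Psi_f|$ equals $-\log|\lambda(\hat x_n)|$, and the Birkhoff sum of the second term is $-\delta(f)\,(M_{\mu_n})'(f)\cdot v \cdot n$, where $\mu_n$ is the primitive orbit measure on $\hat x_n$ and $M_{\mu_n}$ is the (real) multiplier function from Section \ref{sec_multipliers}. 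The coboundary hypothesis forces the Birkhoff sum of $\dot g(0,z)$ over every periodic orbit to vanish (Livsic), giving for every $n$-cycle
\[
\dot\delta(0)\log|\lambda(\hat x_n)| + \delta(f)\, n\,(M_{\mu_n})'(f)[v] = 0.
\]

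The strategy is then to derive a contradiction from this family of identities by exploiting the abundance of multipliers supplied by Corollary \ref{cor_multiplier} (which applies precisely because, by Proposition \ref{prop_HDJ} and the hypothesis $\mathcal{H}\not\subset\mathcal{S}_d$ together with $\mathcal{H}\neq\mathcal{H}_0$, the Julia set $J(P)$ is connected, not a Cantor set, and $P$ is not a monomial, so $\delta(P)>1$ along the whole component). Concretely: fix a repelling cycle with multiplier $a_t$ and with $a'(0)\neq 0$ along a suitable analytic lift $f_t$ realizing the tangent direction $v$; then for a second multiplier $b_t$ and each large $n$ and fixed $\kappa\in(0,1)$ we obtain multipliers $\lambda_{t,n} = e^{i\theta_{t,n}}(a_t^n + a_t^{n\kappa}b_t + o(a_t^{n\kappa}b_t))$. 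Writing $\hat x_n^{(\kappa,b)}$ for the corresponding cycles (of period, say, $p(n)$), the identity above reads, after dividing by the period and passing through the complex multiplier function $\widetilde M$, an asymptotic relation between $\dot\delta(0)$, $(M_{\mu_n})'(f)[v]$, and the logarithmic derivatives of $\log|a_t^n + a_t^{n\kappa}b_t|$. The point is that $n$, $\kappa$, and $b_t$ are nearly free parameters: taking $n$ large and varying $\kappa$ or $b_t$, the term $n\,(M_{\mu_n})'(f)[v]$ cannot be proportional to $\log|\lambda(\hat x_n)| = |$Re$(\text{period})\cdot M_{\mu_n}(f)|$ with a fixed constant of proportionality $-\dot\delta(0)/\delta(f)$ unless $\dot\delta(0) = 0$ and $(M_{\mu_n})'(f)[v] = 0$ for all such $n$. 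Then one shows that $(M_{\mu})'(f)[v] = 0$ for a dense set of $\mu\in\Omega(J(f))$, hence for all $\mu$ by the continuity in Proposition \ref{thm_Manalytic}; in particular $(M_\nu)'(f)[v]=0$ and $\dot\delta(0)=0$, which forces $\frac{d^2}{dt^2}\big|_{t=0}G_f = \delta(f)(M_\nu)''(f)[v,v]$, and the same vanishing of first-order multiplier data, combined with the strict convexity of pressure (variance zero $\Leftrightarrow$ coboundary), forces $v=0$ — contradiction.

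The main obstacle, and the step requiring the most care, is making the heuristic ``$n,\kappa,b_t$ are free parameters'' into a rigorous contradiction. The subtlety is that Corollary \ref{cor_multiplier} only guarantees \emph{existence} of some cycle whose multiplier has modulus in a prescribed annulus; it does not directly control the \emph{period} $p(n)$ of that cycle, and the identity from the coboundary hypothesis is weighted by the period. I would handle this by noting that the coboundary relation is genuinely about the \emph{unnormalized} Birkhoff sums: if $\dot g(0,\cdot) = h\circ\sigma - h$, then for any periodic point $z$ of period $p$, $\sum_{j=0}^{p-1}\dot g(0,\sigma^j z) = 0$ exactly, with no asymptotics. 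So the relation $\dot\delta(0)\log|\lambda| = -\delta(f)\,p\,(\text{average of the multiplier-derivative term})$ holds on the nose for every cycle, and one compares two cycles whose multipliers have the \emph{same} modulus to first order but which are produced from different $(\kappa, b)$ data, eliminating the period and isolating $\dot\delta(0)$; then a separate density/continuity argument upgrades ``$(M_\mu)'(f)[v]=0$ on primitive orbit measures arising this way'' to all of $\Omega(J(f))$. An alternative, possibly cleaner route is to bypass the fine multiplier asymptotics and instead show directly that the linear functional $\mu \mapsto (M_\mu)'(f)[v]$ on $\Omega(J(f))$, if it annihilates a spanning set of primitive orbit measures modulo the constraint coming from $\dot\delta(0)$, must be identically zero, and then invoke Corollary \ref{cor_G} with a short thermodynamic argument; but the cycle-counting input from Proposition \ref{prop_Oh}/Corollary \ref{cor_multiplier} seems essential to rule out the ``conspiracy'' where the derivative data is exactly proportional to the multiplier data. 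I expect that reconciling the period-weighting with the modulus-only control from Corollary \ref{cor_multiplier} is where the real work lies.
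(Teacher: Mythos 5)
Your setup coincides with the paper's: assume $\dot g(0,\cdot)$ is a coboundary, evaluate the unnormalized Birkhoff sums over periodic orbits to obtain, for \emph{every} repelling multiplier, the period-free identity $\tfrac{d}{dt}\big|_{t=0}\log|\lambda_t| = K\log|\lambda_0|$ with the single constant $K=-\dot\delta(0)/\delta(f)$, and feed in the multipliers supplied by Corollary \ref{cor_multiplier} (your worry about period-weighting indeed dissolves, exactly as you suspected, because the coboundary relation is exact on unnormalized sums). The gap is in the endgame. First, your claim that varying $n$, $\kappa$ and $b_t$ forces $\dot\delta(0)=0$ and $(M_{\mu_n})'(f)[v]=0$ is asserted rather than proved, and it is not what the relation actually yields: substituting $\lambda_{t,n}=e^{i\theta_{t,n}}(a_t^{n}+a_t^{n\kappa}b_t+o(a_t^{n\kappa}b_t))$, dividing by $n|a_0|^{n\eta}$ and letting $n\to\infty$ produces $\mathrm{Re}\bigl(\tfrac{a'(0)}{a_0}\,b_0\bigr)=0$ for every repelling multiplier $b_0$, not the vanishing of $K$ (this is Proposition \ref{K=0}). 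Second, even granting that all first-order multiplier data vanishes, your final step is circular: ``variance zero if and only if coboundary'' is precisely Corollary \ref{cor_G}, which is the hypothesis you started from, and the vanishing of every multiplier derivative in the direction $v$ does not by itself give $v=0$ --- that would require an injectivity or immersion statement for the multiplier-spectrum map that neither you nor the paper establishes.

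What the paper does instead, and what your plan is missing, is this: from $\mathrm{Re}(a'(0)b_0/a_0)=0$ for all $b_0$, a case analysis on whether $\arg(a_0)/\pi$ is rational (choosing $\kappa$ rational or irrational accordingly and extracting subsequences $n_j$ with $u_0^{n_j\eta}\to 1$ or $\to i$) shows first that $a_0$ is real, then that $a'(0)/a_0$ is purely imaginary, and finally that \emph{every} repelling multiplier of $f$ is real. Eremenko's theorem then forces $J(f)$ to lie on a circle, so $\delta(f)\le 1$, contradicting the input $\delta(f)>1$ from Proposition \ref{prop_HDJ}. This ``all multipliers real, hence Julia set on a circle'' step is the actual source of the contradiction; you use $\delta(f)>1$ only to invoke the counting result, whereas the paper needs it a second time at the very end. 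Without this step, or a genuine substitute for it, your argument does not close.
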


\begin{proof}
We prove by contradiction. Consider a path $c(t)=[P_t]$ with $P_0=P$ and $c'(0)=v$. Suppose $||v||_G= 0$ and $v \neq 0$. By Corollary \ref{cor_G}, the map $\dot{g}(0,z)= -\delta(P_t)\log|P_t'\circ \Psi_{P_t}(z)|$ is a coboundary. 

Let $\sigma:\Sigma\to\Sigma$ be the map such that $P\circ\Psi_P=\Psi_P\circ \sigma$. By definition of coboundary, there exists a continuous function $h: \Sigma\to\mathbb{R}$ such that $\dot{g}(0,z)=h(z)-h(\sigma(z))$. Let $z\in\Sigma$ be a periodic point of $\sigma$, i.e. $\sigma^n(z)=z$ for some $n\ge 1$. Then 
\begin{align*} 
0&= h(z) - h(\sigma^n(z))\\
&= \frac{d}{dt}\bigg|_{t=0} g(t,z)+\frac{d}{dt}\bigg|_{t=0} g(t,\sigma(z))+\cdots+\frac{d}{dt}\bigg|_{t=0} g(t,\sigma^{n-1}(z))\\
&=-\frac{d}{dt}\bigg|_{t=0} \delta(P_t)\log|(P_t^n)'\circ \Psi_{P_t}(z)|.
\end{align*} 
                                                             
Applying the chain rule, we obtain
\begin{align*}
\frac{d}{dt}\Big|_{t=0} \log |(P^n_t)'\circ \Psi_{P_t}(z)| &= -\frac{\frac{d}{dt}\big|_{t=0} \delta(P_t)}{\delta(P_0)} \cdot \log |(P^n_0)'\circ \Psi_{P_0}(z)|.
\end{align*}
Note that the quantity $K \defeq \frac{d}{dt}\big|_{t=0} \delta(P_t)/\delta(P_0)$ is a real number. Therefore, there exists a constant $K \in \bbR$ such that $$\frac{d}{dt}\Big|_{t=0} \log |\lambda_t|= K \log |\lambda_0|$$ for all multipliers $\lambda_t$ of repelling periodic orbits of $P_t$. 

Reparametering $c(t)$ if necessary, the family $P_t$ has a multiplier of some repelling periodic orbit with nonzero derivative at $t=0$ since $v\not=0$. Then assuming next proposition, we derive a contradiction. 
\end{proof}

\begin{prop}\label{K=0}
Let $f$ be a hyperbolic rational map of degree at least $2$. Suppose that the Hausdorff dimension of $J(f)$ is larger than $1$. Let $\{f_t\}_{t\in I}$ be an analytic family such that $f_0=f$ and some multiplier has nonzero derivative at $t=0$. Then there is no $K\in\mathbb{R}$ such that 
$$\frac{d}{dt}\Big|_{t=0} \log |\lambda_t|= K \log |\lambda_0|$$
for all multipliers $\lambda_t$ of repelling cycles of $f_t$.
\end{prop}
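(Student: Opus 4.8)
The plan is to argue by contradiction. Suppose such a $K\in\bbR$ exists. Pick a repelling cycle $\hat x$ of $f=f_0$ whose multiplier $a_t$ has $a'(0)\neq 0$ (this is exactly the standing hypothesis of the proposition), and set $L(a):=a'(0)/a_0$; since $\hat x\subset J(f)$ is repelling, $\log|a_0|>0$, so applying the assumed relation to $\hat x$ itself pins down $K=\mathrm{Re}(L(a))/\log|a_0|$. The relation is automatically consistent with every \emph{multiplicative} identity among multipliers (products of real powers), so a contradiction has to be extracted from multipliers that are genuinely \emph{additive} combinations — which is precisely what Corollary \ref{cor_multiplier} supplies.

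Concretely, I would fix an auxiliary repelling cycle with multiplier $b_t$ (taking $b_t=a_t$ is already enough) and apply Corollary \ref{cor_multiplier} at $t=0$ to the family $\{f_t\}$ — legitimate since $f_t$ is hyperbolic with $\delta(J(f_t))>1$ for $|t|$ small, by continuity of $\delta$. For each $\kappa\in(0,1)$ and all large $n$ this produces a repelling cycle $\hat y_{n}$ of $f_0$ with multiplier $\lambda_{0,n}$ satisfying $|\lambda_{0,n}|=|a_0^n+a_0^{n\kappa}b_0+o(a_0^{n\kappa}b_0)|$; since $\kappa<1<|a_0|$ this yields
\[
\log|\lambda_{0,n}| \;=\; n\log|a_0|+\mathrm{Re}\big(a_0^{n(\kappa-1)}b_0\big)+E_n,\qquad E_n=O\big(|a_0|^{n\theta}\big)\ \text{for some }\theta<\kappa-1 .
\]
Next I would track $\hat y_n$ through the holomorphic motion of $J(f)$ over $U(f)$, so that $\lambda_{t,n}:=\lambda_{\hat y_n}(f_t)$ is holomorphic in $t$ on a disk whose radius is independent of $n$; granting that the displayed expansion persists uniformly in $t$ on that disk with the same geometric error bound, Cauchy's estimate controls the $t$-derivative of the error, and differentiating at $t=0$ (using the relation for $\hat x$, i.e.\ $\mathrm{Re}(L(a))=K\log|a_0|$, for the leading term) gives
\[
\frac{d}{dt}\Big|_{0}\log|\lambda_{t,n}| \;=\; nK\log|a_0|+\mathrm{Re}\Big(\big(n(\kappa-1)L(a)+L(b)\big)\,a_0^{n(\kappa-1)}b_0\Big)+o\big(|a_0|^{n(\kappa-1)}\big).
\]
Comparing this with the relation applied to $\hat y_n$, namely $\frac{d}{dt}\big|_0\log|\lambda_{t,n}|=K\log|\lambda_{0,n}|=nK\log|a_0|+K\,\mathrm{Re}(a_0^{n(\kappa-1)}b_0)+o(|a_0|^{n(\kappa-1)})$, and cancelling $nK\log|a_0|$, I get
\[
\mathrm{Re}\Big(\big(n(\kappa-1)L(a)+L(b)-K\big)\,a_0^{n(\kappa-1)}b_0\Big)=o\big(|a_0|^{n(\kappa-1)}\big).
\]
The left-hand side is dominated by $n(\kappa-1)\,\mathrm{Re}(L(a)\,a_0^{n(\kappa-1)}b_0)$, which has modulus $\asymp n\,|a_0|^{n(\kappa-1)}$ along a positive-density set of $n$ — this uses $L(a)\neq0$ and the freedom to choose $\kappa\in(0,1)$ (so that $\arg(a_0^{n(\kappa-1)})$ equidistributes $\bmod\,\pi$ when $a_0\notin\bbR_{>0}$), together with the abundance of repelling cycles from Oh--Winter to replace $b$ if necessary; the only delicate case, $a_0\in\bbR_{>0}$ with $K=0$, is handled using $\delta(J(f))>1$ to rule out every repelling multiplier being real. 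This contradiction proves the proposition.

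The step I expect to be the main obstacle is the one flagged above: Corollary \ref{cor_multiplier} is a \emph{per-parameter} existence statement, whereas here one must differentiate $\log|\lambda_{t,n}|$ in $t$ with a controlled error, i.e.\ exhibit for each large $n$ a \emph{single} cycle of $f_0$ whose tracked multiplier stays in the moving annulus $A_{t,n}$ for $t$ ranging over a disk of $n$-independent radius (so Cauchy's estimate applies to the remainder). This should follow from the uniformity of the Oh--Winter asymptotics (Proposition \ref{prop_Oh}) across the analytic family combined with the holomorphic motion of $J(f)$, but it is the technical heart of the argument; by comparison the chain-rule expansion, the non-degeneracy choice of $\kappa$ and $b$, and the real-$a_0$ case are routine.
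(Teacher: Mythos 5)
Your proposal follows essentially the same route as the paper's proof: apply Corollary \ref{cor_multiplier} to manufacture multipliers of the additive form $a_t^{n}+a_t^{n\kappa}b_t+o(a_t^{n\kappa}b_t)$, differentiate the hypothesized relation at $t=0$, cancel the leading term $nK\log|a_0|$ using the relation applied to $a_t$ itself, divide by $n|a_0|^{n(\kappa-1)}$, and conclude that $Re\bigl(u_0^{n(\kappa-1)}\,\tfrac{a'(0)}{a_0}b_0\bigr)\to 0$, where $u_0=a_0/|a_0|$. Your endgame is organized slightly differently: you choose $\kappa$ so that $\arg\bigl(a_0^{n(\kappa-1)}\bigr)$ equidistributes and kill the case $a_0\notin\bbR_{>0}$ in one stroke, whereas the paper passes to a subsequence with $u_0^{n\eta}\to 1$, runs a two-case analysis on the rationality of $\arg(a_0)/\pi$ to show $a_0$ must be real, and only then concludes that every repelling multiplier is real. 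Both versions terminate with the same input from \cite{Eremenko11} (all repelling multipliers real forces $J(f)$ into a circle, contradicting $\delta>1$), and your case split is sound.

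The obstacle you flag is genuine, and it is worth noting that the paper's own proof does not address it either. Corollary \ref{cor_multiplier} only asserts, for each fixed $t$ and $n$, the existence of \emph{some} multiplier in the annulus $A_{t,n}$; to write $\frac{d}{dt}\big|_{t=0}\log|\lambda_{t,n}|=K\log|\lambda_{0,n}|$ one needs $\lambda_{t,n}$ to be the multiplier function of a single repelling cycle of $f_0$ tracked through the holomorphic motion, and one needs the derivative of the $o(a_t^{n\kappa}b_t)$ remainder to stay $o\bigl(n|a_0|^{n(\kappa-1)}\bigr)$ at $t=0$ (the paper simply drops the term $\frac{d}{dt}\big|_{t=0}o(|a_t^{n\eta}b_t|)$ without comment). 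Your plan --- fix a cycle at $t=0$, keep its tracked multiplier in the moving annulus over a $t$-disk of $n$-independent radius, and control the remainder by Cauchy's estimate --- is the right shape of repair, but it hinges on the uniformity in $t$ of the Oh--Winter count (Proposition \ref{prop_Oh}) that you correctly identify as the technical heart; neither your sketch nor the published argument supplies it.
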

\begin{proof}
Suppose there is $K\in\mathbb{R}$ such the above equation holds. We will obtain a contradiction by showing all the multipliers of repelling cycles of $f$ are real. 

Let $a_t$ be a multiplier of $f_t$ such that $a_t'|_{t=0}\not=0$.  Let $b_t$ be a repelling multiplier of $f_t$. By Corollary \ref{cor_multiplier}, for $\kappa\in(0,1)$, consider the multiplier 
$$\lambda_{t, n }=e^{i\theta_{t, n}}(a_t^{n}+a_t^{n\kappa}b_t+o(a_t^{n\kappa }b_t)).$$
Then $$\frac{d}{dt}\Big|_{t=0} \log |\lambda_{t,n}|= K \log |\lambda_{0,n}|.$$
Note that 
$$|\lambda_{t,n}|=|a_t^{n}+a_t^{n\kappa}b_t+o(a_t^{n\kappa }b_t)|=|a_t|^n\cdot|1+a_t^{n(\kappa-1)}b_t+o(a_t^{n(\kappa-1)}b_t)|.$$
To ease notation, set $\eta:=\kappa-1$. Then $\eta\in(-1,0)$. 
It follows that 
\begin{align*}
\log|\lambda_{t,n}|&=n\log|a_t|+\log |1+a_t^{n\eta}b_t+o(a_t^{n\eta}b_t)|\\
&=n\log|a_t|+\log |1+a_t^{n\eta}b_t|+\log|1+o(a_t^{n\eta}b_t)|.\\
&=n\log|a_t|+Re(a_t^{n\eta}b_t)+o(|a_t^{n\eta}b_t|).
\end{align*}
Hence 
$$\frac{d}{dt}\Big|_{t=0} \log |\lambda_{t,n}|=\frac{d}{dt}\Big|_{t=0} \log|a_t|+\frac{d}{dt}\Big|_{t=0} Re(a_t^{n\eta}b_t)+\frac{d}{dt}\Big|_{t=0}o(|a_t^{n\eta}b_t|).$$
Since $\frac{d}{dt}\Big|_{t=0} \log|a_t|=K\log|a_0|$, it follows that 
\begin{align*}
0&=\log|\lambda_{t,n}|-K\log|\lambda_0|\\
&=\frac{d}{dt}\Big|_{t=0} Re(a_t^{n\eta}b_t)+\frac{d}{dt}\Big|_{t=0}o(|a_t^{n\eta}b_t|)-K(Re(a_0^{n\eta}b_0)+o(|a_0^{n\eta}b_0|))\\
&=\frac{d}{dt}\Big|_{t=0} Re(a_t^{n\eta}b_t)-K(Re(a_0^{n\eta}b_0)+o(|a_0^{n\eta}b_0|))\\
&=Re\left(\frac{d}{dt}\Big|_{t=0}a_t^{n\eta}b_t\right)-K(Re(a_0^{n\eta}b_0)+o(|a_0^{n\eta}b_0|))\\
&=Re(n\eta a_0^{n\eta-1}a_t'|_{t=0}b_0+a_0^{n\eta}b'_t|_{t=0})-K(Re(a_0^{n\eta}b_0)+o(|a_0^{n\eta}b_0|)).
\end{align*}
Dividing by $n|a_0|^{n\eta}$ and taking limit as $n\to\infty$, we have 
$$\lim\limits_{n\to\infty}Re\left(\eta\frac{a_0^{n\eta}}{|a_0|^{n\eta}}\frac{a_t'|_{t=0}}{a_0}b_0\right)=0.$$
Since $\eta$ is real and nonzero, it follows that 
$$\lim\limits_{n\to\infty}Re\left(\frac{a_0^{n\eta}}{|a_0|^{n\eta}}\frac{a_t'|_{t=0}}{a_0}b_0\right)=0.$$

Set $u_0:=a_0/|a_0|=e^{i\theta_0}$. Then $u_0^\eta=a_0^\eta/|a_0|^\eta=e^{i\eta\theta_0}$. Choose a subsequence $n_j$ such that $u_0^{n_j\eta}\to 1$ as $j\to\infty$. Then we have 
$$\lim\limits_{j\to\infty}Re\left(u_0^{n_j\eta}\frac{a_t'|_{t=0}}{a_0}b_0\right)=Re\left(\frac{a_t'|_{t=0}}{a_0}b_0\right).$$
It follows that 
$$Re\left(\frac{a_t'|_{t=0}}{a_0}b_0\right)=0.$$

Now we claim that $a_0$ is real. For otherwise, we have $\theta_0\in(0, 2\pi)$ and $\theta_0\not=\pi$. To obtain a contradiction, we discuss in the following two cases. 

Case 1: $\theta/\pi\in(0, 2)$ is irrational. Pick $\kappa\in(0,1)$ to be rational. Then $\eta\in(-1,0)$ is rational. Choose a subsequence $n_k$ such that $u_0^{n_k\eta}\to i$ as $k\to\infty$.  Then
$$\lim\limits_{k\to\infty}Re\left(\frac{a_0^{n_k\eta}}{|a_0|^{n_k\eta}}\frac{a_t'|_{t=0}}{a_0}b_0\right)=Im\left(\frac{a_t'|_{t=0}}{a_0}b_0\right).$$
It follows that 
$$Im\left(\frac{a_t'|_{t=0}}{a_0}b_0\right)=0.$$
Thus 
$$\frac{a_t'|_{t=0}}{a_0}b_0=0.$$
Since $a_t'|_{t=0}\not=0$, we have $b_0=0$. It is impossible since $b_0$ is the multiplier of a repelling cycle of $f$.

Case 2: $\theta/\pi\in(0, 2)$ is rational and $\theta/\pi\not=1$. Pick $\kappa\in(0,1)$ to be irrational. Then $\eta\in(-1,0)$ is irrational. Write $\theta/\pi=p/q$ for two (not necessary coprime) integers $p$ and $q$ such that $u_0^q=1$. Set $n_\ell=\ell q+1$. Then $u_0^{n_\ell}=u_0$. It follows that 
\begin{align*}
0&=\lim\limits_{\ell\to\infty}Re\left(u_0^{n_\ell\eta}\frac{a_t'|_{t=0}}{a_0}b_0\right)\\
&=Re\left(u_0^\eta\frac{a_t'|_{t=0}}{a_0}b_0\right)\\
&=Re(u_0^\eta)Re\left(\frac{a_t'|_{t=0}}{a_0}b_0\right)-Im(u_0^\eta)Im\left(\frac{a_t'|_{t=0}}{a_0}b_0\right).
\end{align*}
Since $Re\left(\frac{a_t'|_{t=0}}{a_0}b_0\right)=0$, we have 
$$Im(u_0^\eta)Im\left(\frac{a_t'|_{t=0}}{a_0}b_0\right)=0.$$
Note that $Im(u_0^\eta)\not=0$. It follows that 
$$Im\left(\frac{a_t'|_{t=0}}{a_0}b_0\right)=0.$$
Then we obtain the same contradiction as in Case 1. This proves the claim.

Note that $b_t$ is an arbitrary multiplier of $f_t$ in the above argument.  If we set $b_t=a_t$, we have 
$$Re\left(\frac{a_t'|_{t=0}}{a_0}a_0\right)=0.$$
Hence $a_t'|_{t=0}$ is purely imaginary. Since $a_0$ is real, it follows that $a_t'|_{t=0}/a_0$ is purely imaginary.
 
We claim that $b_0$ is real. Indeed, it immediately follows from that $Re\left(\frac{a_t'|_{t=0}}{a_0}b_0\right)=0$ and $Re\left(\frac{a_t'|_{t=0}}{a_0}\right)=0$. 

Since $b_0$ is arbitrary, all the repelling multipliers of $f$ are real. By \cite[Theorem 1]{Eremenko11}, the Julia $J(f)$ is contained in a circle and hence the Hausdorff dimension of $J(f)$ is at most $1$. It is a contradiction. 
\end{proof}

\subsection{The component $\mathcal{H}_0$ in $\mathrm{poly}_2$}
In this subsection, we show $||\cdot||_G$ is also positive-definite on the main cardioid of the Mandelbrot set.

\begin{thm}\label{d=2}
If $\mathcal{H}_0$ is the central component in $poly_2$, then $||\cdot||_G$ is positive-definite on $\mathcal{H}_0$.
\end{thm}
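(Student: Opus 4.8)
The plan is to split $\mathcal{H}_0$ into the point $[z^2]$ and its complement, since the argument already used to prove Theorem~\ref{main} handles $\mathcal{H}_0\setminus\{[z^2]\}$ (the point $[z^2]$ being exactly why $\mathcal{H}_0$ was excluded there). Indeed, by the corollary following Proposition~\ref{prop_HDJ} one has $\delta([P])>1$ for every $[P]\in\mathcal{H}_0\setminus\{[z^2]\}$, and such a $P$ is not conjugate to a monomial; so the argument in the proof of Theorem~\ref{main} applies verbatim at $[P]$: if $\|v\|_G=0$ for some nonzero $v\in T_{[P]}\mathcal{H}_0$, then by Corollary~\ref{cor_G} the associated potential variation is a coboundary, producing a constant $K\in\bbR$ with $\frac{d}{dt}\big|_{t=0}\log|\lambda_t|=K\log|\lambda_0|$ along a path, contradicting Proposition~\ref{K=0}. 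Thus only the point $[z^2]$ requires a new argument.

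At $f=z^2$ we have $J(f)=S^1$ and $\delta(f)=1$, and the H\"older potential $-\delta(f)\log|f'|=-\log|2z|$ is \emph{constant} equal to $-\log 2$ on $S^1$; hence its equilibrium state $\nu$ is the measure of maximal entropy of $z\mapsto z^2$, namely normalized Lebesgue measure on $S^1$. I claim that $M_\nu\equiv\log 2$ on all of $\mathcal{H}_0$. For $[g]\in\mathcal{H}_0$ the conjugacy $\phi_g\colon J(f)\to J(g)$ carries $\nu$ to a $g$-invariant measure of entropy $h(z^2,\nu)=\log 2$, which is the topological entropy of $g$; hence $(\phi_g)_\ast\nu$ is the measure of maximal entropy $\mu_g$ of $g$, and $M_\nu(g)=\int_{J(g)}\log|g'|\,d\mu_g$ is its Lyapunov exponent. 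By the Manning formula for the Lyapunov exponent of the maximal measure of a polynomial, this equals $\log 2+\sum_{g'(\omega)=0}G_g(\omega)$ with $G_g$ the dynamical Green function; since the unique critical point of $g=z^2+c$ lies in the filled Julia set when $[g]\in\mathcal{H}_0$, we obtain $M_\nu(g)=\log 2$. (Alternatively, identify $\phi_g$ with the boundary extension of the B\"ottcher coordinate $\Phi_g$ and use that $\log|\Phi_g(w)|-\log|w|$ is bounded harmonic on $\{|w|>1\}$ and tends to $0$ at $\infty$, so has mean $0$ over $S^1$.)

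Consequently $G_{z^2}(g)=\delta(g)M_\nu(g)=(\log 2)\,\delta(g)$ on $\mathcal{H}_0$; in particular $G_{z^2}$ attains its minimum at $[z^2]$ (consistent with Proposition~\ref{prop_metric}), so its Hessian there is well-defined and, for any path $\tilde c(t)=P_t$ in $\mathcal{H}_0$ with $P_0=z^2$ and $\tilde c'(0)=\tilde v\neq 0$,
$$\|\tilde v\|_G^2=\frac{d^2}{dt^2}\Big|_{t=0}G_{z^2}(P_t)=(\log 2)\,\frac{d^2}{dt^2}\Big|_{t=0}\delta(P_t)=(\log 2)\,\mathrm{Hess}_{[z^2]}\delta(\tilde v,\tilde v).$$
By Ruelle's expansion $\delta(z^2+c)=1+\tfrac{|c|^2}{4\log 2}+o(|c|^2)$ \cite{Ruelle82}, the Hessian of $\delta$ at $[z^2]$ is positive-definite, hence $\|\tilde v\|_G^2>0$; this completes the case $[z^2]$ and the proof. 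I expect the main obstacle to be the middle step, namely pinning down $(\phi_g)_\ast\nu$ as the maximal measure of $g$ and evaluating its Lyapunov exponent; once $M_\nu$ is seen to be constant on $\mathcal{H}_0$, the conclusion rests only on Ruelle's (classical) non-degenerate expansion of $\delta$ near $z^2$.
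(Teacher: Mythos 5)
Your proposal is correct. On $\mathcal{H}_0\setminus\{[z^2]\}$ it coincides with the paper's argument (Corollary \ref{cor_G} plus Proposition \ref{K=0}, using $\delta>1$ off the center), but at the point $[z^2]$ you take a genuinely different route. The paper stays inside its coboundary framework: since $[z^2]$ minimizes $\delta$, the constant $K$ must be $0$; since every repelling multiplier of $z^2$ is real, the coboundary condition forces $Re\big(\tfrac{d}{dt}\big|_{t=0}\lambda_t\big)=0$ for all repelling cycles; the fixed-point multiplier $1+\sqrt{1-4c(t)}$ then pins the putative null direction to $c'(0)=\pm i$, which the $3$-cycle multipliers rule out by direct computation. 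You instead observe that the equilibrium state $\nu$ at $z^2$ is the maximal entropy measure, that $(\phi_g)_*\nu$ is therefore the maximal entropy measure of $g$ (entropy is a conjugacy invariant and the measure of entropy $\log 2$ is unique), and that the Manning--Przytycki formula gives $M_\nu\equiv\log 2$ throughout the connectedness locus, so $G_{z^2}=(\log 2)\,\delta$ and positive-definiteness reduces to Ruelle's expansion $\delta(z^2+c)=1+|c|^2/(4\log 2)+o(|c|^2)$. Your route buys a clean closed form ($\|v\|_G^2=|v|^2/2$ at the center) and replaces the ad hoc $1$- and $3$-cycle computations by two classical external inputs (Przytycki's Lyapunov-exponent formula and Ruelle's asymptotics); the paper's route is self-contained relative to its own machinery but is precisely the step the authors say they cannot reproduce for $d\ge3$. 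It is worth noting that your identity $G_{z^d}=(\log d)\,\delta$ persists for all $d$, so it recasts the authors' open question about $T_{[z^d]}\mathcal{H}_0$ as the (non)degeneracy of the Hessian of $\delta$ at $[z^d]$, for which no analogue of Ruelle's two-variable expansion is invoked here.
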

\begin{proof}
    Consider a curve $P_t(z)=z^2+c(t)$ with $P_t\in\mathcal{H}_0$. We first claim that if $c(0)\not=0$, then $||v||_G \neq 0$ for any nonzero tangent vector $v \in T_{P_0}\mathcal{H}$. Indeed, if $||v||_G = 0$, then again, there exists a constant $K = \delta'(v)/\delta(0)\in\mathbb{R}$ such that $$\frac{d}{dt}\Big|_{t=0} \log |\lambda_t|= K \log |\lambda_0|$$ for all multipliers $\lambda_t$ of repelling cycles of $P_t$. Since $\delta(P_0) > 1$, Proposition \ref{K=0} gives a contradiction.

Therefore, it suffices to check that $||\cdot||_G$ is nondegenerate on the tangent space $T_{z^2}\mathcal{H}$ at $z^2$. Suppose $||v||_G = 0$ for some nonzero $v \in T_{z^2}\mathcal{H}$. Then there exists a constant $K = \delta'(v)$ such that $$\frac{d}{dt}\Big|_{t=0} \log |\lambda_t|= K \log |\lambda_0|$$ for all multipliers $\lambda_t$ of repelling cycles of $P_t$. But here $K = 0$ since $P_0(z) = z^2$ is the local minimum for the Hausdorff dimension function.  Also we note that all multipliers $\lambda_0$ of the repelling cycles of $P_0(z) = z^2$ are real numbers. Therefore, $\frac{d}{dt}\Big|_{t=0} \log |\lambda_t|=0$ implies that
$$Re\left(\frac{d}{dt}\Big|_{t=0}\lambda_t \right) = 0$$ for all multipliers $\lambda_t$ of repelling cycles of $f_t$.

The contradiction follows from direct computations. The multiplier for the repelling $1$-cycle is $1+\sqrt{1-4c(t)}$. Plugging into the above equation and using $c_0=0$, $$Re\left(\frac{d}{dt}\Big|_{t=0}\lambda_t \right) = Re\left(\frac{ - 2c'(0)}  {\sqrt{1-4c_0}(1+ \sqrt{1-4c_0})}\right) = 0$$ implies the tangent vector $v = c'(0)$ must be $\pm i$, namely the purely imaginary direction. On the other hand, there are two $3$-cycles and their multipliers are \\$-4\left(-c(t)-2 \pm c(t)\sqrt{-4c(t)-7}\right)$, respectively.
Therefore, $$Re\left(\frac{d}{dt}\Big|_{t=0}\lambda_t \right) = Re\left( \frac{-c'(0) + c'(0)\sqrt{-4c_0-7} - \frac{2c_0c'(0)}{\sqrt{-4c_0-7}}}{-c_0-2 + c_0\sqrt{-4c_0-7}} \right).$$
But $c_0 =0$ and $v = \pm i$ do not give $Re\left(\frac{d}{dt}\big|_{t=0}\lambda_t \right) = 0$, which is a contradiction.

Hence, $||\cdot||_G$ is positive-definite on $\mathcal{H}_0$.
\end{proof}

If $\mathcal{H}_0$ is the central component in $\mathrm{poly}_d$ for $d \ge 3$, then by the same argument as in Theorem \ref{d=2}, the form $||\cdot||_G$ is positive-definite on the tangent space $T_{[P]}\mathcal{H}_0$ if $[P]\not=[z^d]$. Therefore the positive-definiteness of $||\cdot||_G$ on $\mathcal{H}_0$ is reduced to the positive-definiteness of $||\cdot||_G$ on the tangent space $T_{[z^d]}\mathcal{H}_0$. However, the proof of Theorem \ref{d=2} is much difficult to reproduce for $T_{[z^d]}\mathcal{H}_0$ when $d \ge 3$. In fact, the positive-definiteness of $||\cdot||_G$ on $T_{[z^d]}\mathcal{H}_0$ is equivalent to a negative answer of the following question.
\begin{question}
For $d\ge 3$, let $\{P_t\}_{t\in I}$ be an analytic family with $P_0(z)=z^d$. Are $\lambda_t'|_{t=0}$ purely imaginary for all repelling multipliers $\lambda_t$ of $P_t$?
\end{question}

\section{Applications to the Hausdorff dimension of Julia sets} \label{sec_nolocmax}
We give two applications of our metric to the Hausdorff dimension of Julia sets. First, we show that the Hausdorff dimension function $\delta :\mathcal{H} \to \bbR$ has no local maximum on any hyperbolic component $\mathcal{H}$ in $\mathrm{poly}_d\setminus\mathcal{S}_d$. This result relates to a theorem due to Ransford \cite{Ransford93} where he proved the result for an analytic family of degree $d \ge 2$ rational maps parametrized by a simply connected domain in $\bbC$. Second, we give a sufficient condition for a point not being a critical point of $\delta$.

We begin with the first application.
\begin{thm}\label{thm:no-max}
	Let $\mathcal{H}$ be a hyperbolic component in $\mathrm{poly}_d\setminus\mathcal{S}_d$. The Hausdorff dimension function $\delta: \mathcal{H} \to [1,2)$ has no local maximum on $\mathcal{H}$.
\end{thm}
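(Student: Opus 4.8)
The plan is to argue by contradiction using the Hessian bound established in the previous sections together with the positive-definiteness of $||\cdot||_G$ on $\mathcal{H}$. Suppose $[P_0]\in\mathcal{H}$ is a local maximum of $\delta$. Since $\mathcal{H}\subset\mathrm{poly}_d\setminus\mathcal{S}_d$, Proposition \ref{prop_HDJ} (or its corollary) gives $\delta(P_0)>1$ whenever $[P_0]\neq[z^d]$, and in any case $\delta(P_0)\geq 1$; the point $[z^d]$ needs to be treated separately, but if $\mathcal{H}\ne\mathcal{H}_0$ then $[z^d]\notin\mathcal{H}$, so we may assume $\delta(P_0)>1$ unless $\mathcal{H}=\mathcal{H}_0$, in which case Theorem \ref{d=2} (for $d=2$) handles it. First I would recall that $\delta:\mathcal{H}\to\bbR$ is real-analytic (\cite{Bowen79}) and compute the first and second derivatives of $\delta$ along a path. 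The key observation is the identity
$$G_{P_0}(g)=\delta(g)M_\nu(g),$$
where $\nu$ is the equilibrium state of $-\delta(P_0)\log|P_0'|$, combined with Proposition \ref{prop_metric}, which says $G_{P_0}$ attains a \emph{minimum} at $g=P_0$.

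Next I would extract the differential relation. Fix a nonzero $v\in T_{[P_0]}\mathcal{H}$ and a path $c(t)=[P_t]$ with $c(0)=[P_0]$, $c'(0)=v$. If $[P_0]$ is a local maximum of $\delta$, then $D\delta([P_0])=0$ and the Hessian $D^2\delta([P_0])$ is negative semi-definite. Since $M_\nu(P_0)>0$ (it is the Lyapunov exponent, which is positive by hyperbolicity and $\delta>0$), differentiating $G_{P_0}(t)=\delta(P_t)M_\nu(P_t)$ twice at $t=0$ and using $\delta'(0)=0$ gives
$$||v||_G^2=\frac{d^2}{dt^2}\Big|_{t=0}G_{P_0}(P_t)=\delta''(0)M_\nu(P_0)+\delta(P_0)\,\frac{d^2}{dt^2}\Big|_{t=0}M_\nu(P_t).$$
Here $\delta''(0)\leq 0$. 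The remaining task is to control the term $\frac{d^2}{dt^2}|_{t=0}M_\nu(P_t)$: since $M_\nu=\mathrm{Re}\,\widetilde M_\nu$ and $\widetilde M_\nu$ is holomorphic (hence $M_\nu$ harmonic) on $U(P_0)$, the restriction of $M_\nu$ to a complex line through $P_0$ is harmonic in one complex variable, and one can choose the lift/parametrization so that the second derivative of $M_\nu$ in the real direction $v$ is balanced against the imaginary direction — concretely, harmonicity forces $\partial_{vv}^2 M_\nu + \partial_{iv\,iv}^2 M_\nu = 0$, and by averaging over the two orthogonal real directions one deduces that one of them makes $\delta(P_0)\partial^2 M_\nu\leq 0$ as well, so that $||v||_G^2\leq 0$ for that direction. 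But Theorem \ref{main} (the positive-definiteness result proved in Section \ref{sec_Riem}) says $||v||_G^2>0$ for every nonzero $v$, a contradiction. Hence $\delta$ has no local maximum.

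The main obstacle is making the harmonicity/averaging argument rigorous: one must verify that a local maximum of $\delta$ on the real-analytic manifold $\mathcal{H}$ together with the minimum property of $G_{P_0}$ genuinely forces $||\cdot||_G$ to be non-positive in some direction, rather than merely degenerate. The cleanest route is probably to work on a one-complex-dimensional slice: restrict $\delta$ and $M_\nu$ to a holomorphic disk $\Delta\subset\mathcal{H}$ through $[P_0]$ tangent to $v$; then $M_\nu|_\Delta$ is harmonic, so $\delta(P_0)M_\nu|_\Delta$ is harmonic, while $\delta|_\Delta$ is real-analytic with a local maximum at the center. Writing $G_{P_0}|_\Delta=\delta\cdot M_\nu$ near the center and using that a harmonic function has vanishing Laplacian, one gets $\Delta(G_{P_0}|_\Delta)(0)=M_\nu(P_0)\,\Delta\delta(0)+2\nabla\delta(0)\cdot\nabla M_\nu(0)=M_\nu(P_0)\,\Delta\delta(0)\leq 0$ since $\nabla\delta(0)=0$ and $\delta$ has a local max. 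But $G_{P_0}|_\Delta$ has a \emph{minimum} at $0$ (Proposition \ref{prop_metric}), so $\Delta(G_{P_0}|_\Delta)(0)\geq 0$, forcing $\Delta\delta(0)=0$; then $\delta|_\Delta$ being real-analytic subharmonic-and-superharmonic-like at a max forces it to be constant on $\Delta$, and one iterates/propagates this to conclude $\delta$ is locally constant — contradicting that a genuine local maximum which is not locally constant would have to exist, or, if $\delta$ is locally constant, contradicting positive-definiteness of $||\cdot||_G$ via $||v||_G^2 = \delta(P_0)\partial^2_{vv}M_\nu$ and harmonicity. I expect the bookkeeping around the $[z^d]$ case and the precise reduction "local max $\Rightarrow$ locally constant $\Rightarrow$ contradiction" to be where the real care is needed.
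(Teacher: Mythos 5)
Your approach is essentially the paper's: both arguments rest on Proposition \ref{prop_metric} (so that $G_{P_0}=\delta M_\nu$ has a minimum at $P_0$ and, at a critical point of $\delta$, the cross term $2\delta' M_\nu'$ drops out of the Hessian of the product), on the pluriharmonicity of $M_\nu$, and on the positive-definiteness of $||\cdot||_G$. In fact your first computation --- choose $w\in\{v,Jv\}$ with $\partial^2_{ww}M_\nu\le 0$, so that $||w||_G^2=\delta''(w,w)\,M_\nu(P_0)+\delta(P_0)\,\partial^2_{ww}M_\nu\le 0$ at a local maximum, contradicting positive-definiteness --- is already complete, and is exactly the paper's observation that the complex structure $J$ interchanges the positive- and negative-definite subspaces $V_+$ and $V_-$ of $M_\nu''$. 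Two cautions. First, the step in your ``cleanest route'' asserting that $\Delta\delta(0)=0$ at a local maximum forces $\delta|_\Delta$ to be locally constant is false: $-|z|^4$ has vanishing Laplacian at $0$ and a strict maximum there. What you actually obtain is that $\mathrm{Hess}(\delta|_\Delta)(0)$ is negative semidefinite with zero trace, hence zero, whence $\mathrm{Hess}(G_{P_0}|_\Delta)(0)=\delta(P_0)\,\mathrm{Hess}(M_\nu|_\Delta)(0)$ is a traceless positive semidefinite form, hence zero --- which is already the desired contradiction with positive-definiteness; no constancy argument is needed or available. Second, for $\mathcal{H}=\mathcal{H}_0$ with $d\ge 3$ Theorem \ref{d=2} does not apply; the paper instead disposes of $[z^d]$ by noting it is a local \emph{minimum} of $\delta$ (since $\delta([z^d])=1$ while $\delta>1$ elsewhere on $\mathcal{H}_0$), and uses that $||\cdot||_G$ is positive-definite at every $[P]\ne[z^d]$ in $\mathcal{H}_0$ by the same argument as in Theorem \ref{d=2}, because $\delta(P)>1$ there.
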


\begin{proof}
Note if $\mathcal{H}=\mathcal{H}_0$ is the central component, then $[z^d]\in\mathcal{H}$ is a local minimum of $\delta$. Now for a hyperbolic component $\mathcal{H}\subset\mathrm{poly}_d\setminus\mathcal{S}_d$, let $[P_0]\in\mathcal{H}$ with $P_0\in\mathrm{Poly}_d$  and $P_0\not=z^d$. We can further assume that $P_0$ is monic and centered. Set $\nu$ the equilibrium state of $-\delta(P_0)\log|P_0'(z)|$. We first show that $[P_0]$ is a critical point of $\delta$ if and only if $P_0$ is a critical point of $M_{\nu}$.
	Suppose $[P_0]\in\mathcal{H}$ is a critical point of $\delta$. Then $\delta'(v) =0$ for all $v \in T_{[P_0]}\mathcal{H}$. 
	Now denote by $\mathrm{Poly}_d^\ast$ the space of the monic and centered polynomials of degree $d$ and consider a lift $\widetilde{\mathcal{H}}\subset\mathrm{Poly}_d^\ast$ of $\mathcal{H}$ such that $P_0\in\widetilde{\mathcal{H}}$. Then $\delta'(\tilde{v}) =0$ for all $\tilde{v} \in T_{P_0}\widetilde{\mathcal{H}}$ Moreover, $(\delta M_{\nu})'(\tilde{v}) = 0$ for all $\tilde{v} \in T_{P_0}\widetilde{\mathcal{H}}$  since $P_0$ is a local minimum for $G_{P_0} = \delta M_{\nu}$ by Proposition \ref{prop_metric}. 
	Then by chain rule and using $\delta'(\tilde{v}) = 0$, we have
	$$0 = (\delta M_{\nu})'(\tilde{v}) = \delta'(\tilde{v}) M_{\nu}(P_0) + \delta(P_0) M_{\nu}'(\tilde{v}) = \delta(P_0) M_{\nu}'(\tilde{v}).$$
	Since $\delta(P_0) \neq 0$, we have $M_{\nu}'(\tilde{v}) =0$ for all $\tilde{v} \in T_{P_0}\widetilde{\mathcal{H}}$, i.e. $M_{\nu}$ has a critical point at $P_0$. The converse direction works the same way.
	
	Recall the complex multiplier function $$\widetilde{M}_{\nu}(P) = \int_{J(P_0)} \log P'(z) d\nu.$$
	Then $M_{\nu}(P) = Re(\widetilde{M}_{\nu}(P))$ and
	$$Re(\widetilde{M}_{\nu}'(\tilde{v})) = M_{\nu}'(\tilde{v}) =0 \text{ for all } \tilde{v} \in T_{P_0}\widetilde{\mathcal{H}}.$$
	Let $\tilde{w} = J \cdot \tilde{v} \in T_{P_0}\widetilde{\mathcal{H}}$ where $J$ is the complex structure on the tangent space $T_{P_0}\widetilde{\mathcal{H}}$. Then
	$$0 = Re(\widetilde{M}_{\nu}'(\tilde{w})) = Re(i \cdot \widetilde{M}_{\nu}'(\tilde{v})) = -Im(\widetilde{M}_{\nu}'(\tilde{v})).$$
	Hence, $M_{\nu}'(\tilde{v}) = 0$ for all $\tilde{v} \in T_{P_0}\widetilde{\mathcal{H}}$ and therefore $\widetilde{M}_{\nu}''$ defines a {\it complex} bilinear $2$-form on $T_{P_0}\widetilde{\mathcal{H}}$.
	
	Since $M_{\nu}''$ is the real part of the form $\widetilde{M}_{\nu}''$, then $M_{\nu}''$ is a real $2$-form. Denote $V_0,V_+$ and $V_-$ the subspace of $T_{P_0}\widetilde{\mathcal{H}}$ on which $M_{\nu}''$ is zero, positive-definite and negative-definite respectively. Then $T_{P_0}\mathcal{M} = V_0 \oplus V_+ \oplus V_-$. Since the complex structure $J$ is an isomorphism between $V_+$ and $V_-$, we have that $dim_\mathbb{R}(V_+) = dim_\mathbb{R}(V_-)$. Hence $dim_\mathbb{R}(V_-) + dim_\mathbb{R}(V_0) \ge d-1$.
	
	Note that $$(\delta M_{\nu})'' = \delta'' M_{\nu} + 2\delta' M_{\nu}' + \delta M_{\nu}'' = \delta'' M_{\nu} + \delta M_{\nu}''.$$ From the previous section and the fact that  $\widetilde{\mathcal{H}}$ is a branched finite cover of $\mathcal{H}$ with the only branched point at $z^d$, the $2$-form $(\delta M_{\nu})''$ is positive-definite on $T_{P_0}\widetilde{\mathcal{H}}$. Therefore, the Hessian $\delta''$ of Hausdorff dimension must be positive-definite on $V_0 \oplus V_-$, which implies that $\delta''$ has positive-definite $\mathbb{R}$-dimension at least $d-1$. Then $P_0$ cannot be a local maximum of $\delta$ on $\widetilde{\mathcal{H}}$, for otherwise $\delta''$ should have positive-definite $\mathbb{R}$-dimension $0$. It follows that $[P_0]$ is not a local maximum of $\delta$ on $\mathcal{H}$.
	\end{proof}

More generally, let $\mathcal{H} \subset\mathrm{rat}_d$ be a hyperbolic component. We explain how our methods developed in the previous sections can be used to give a sufficient condition for a point in $\mathcal{H}$ not being a critical point of $\delta : \mathcal{H} \to (0,2)$.

\begin{thm}\label{thm_suffcond}
	Let $\mathcal{H} \subset\mathrm{rat}_d$ be a hyperbolic component. Then $[f_0] \in\mathcal{H}$ is not a critical point of $\delta$ if 
	\begin{align}
	\inf_{\{\hat{x}_n\}_{n \ge 1}} \displaystyle \liminf_{n \to \infty} \frac{1}{n} \left|\frac{D\lambda_{\hat{x}_n}([f_0])}{\lambda_{\hat{x}_n}([f_0])}\right| \neq 0
	\end{align}
	where $\hat{x}_n$ is an $n$-cycle of $f_0$ and $\lambda_{\hat{x}_n}: \mathcal{H} \to \bbC$ is the function sending $[f]$ to the multiplier of $\phi_f(\hat{x}_n)$ where $f_0$ and $f$ are in the same hyperbolic component in $Rat_d$. The map $D\lambda_{\hat{x}_n}([f_0]): T_{[f_0]}\mathcal{H} \to \bbC$ is the differential of $\lambda_{\hat{x}_n}$. The infimum is taken over all the sequences of cycles $\hat{x}_n$.
\end{thm}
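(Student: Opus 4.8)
The plan is to argue by contrapositive: assume $[f_0]$ is a critical point of $\delta$, and deduce that the infimum in $(1)$ must vanish. So suppose $D\delta([f_0]) = 0$. Working in a lift $\widetilde{\mathcal{H}} \subset \mathrm{Rat}_d$ through $f_0$ as in the proof of Theorem \ref{thm:no-max}, the same chain-rule computation using $G_{f_0} = \delta \, M_{\nu}$ and Proposition \ref{prop_metric} (which gives $D G_{f_0}(f_0) = 0$) shows that $M_{\nu}$ has a critical point at $f_0$, hence $D M_{\nu}(f_0) = 0$. Since $M_{\nu} = \mathrm{Re}\,\widetilde{M}_{\nu}$ and $\widetilde{M}_{\nu}$ is holomorphic on $U(f_0)$, applying the argument with the complex structure $J$ (exactly as in Theorem \ref{thm:no-max}) yields $D\widetilde{M}_{\nu}(f_0) = 0$, i.e. $D\widetilde{M}_{\mu}(f_0) = 0$ for $\mu = \nu$.

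Next I would approximate $\nu$ by primitive orbit measures. By Proposition \ref{density} there is a sequence $\mu_k$ of primitive orbit measures with $\mu_k \to \nu$ in $\Omega(J(f_0))$, say $\mu_k$ supported on an $n_k$-cycle $\hat{x}_{n_k}$. By Proposition (the complex analogue of Proposition \ref{thm_Manalytic}), the map $\mu \mapsto \widetilde{M}_{\mu}$ is continuous into $C^{\omega}(U(f_0),\bbC)$ with the topology of uniform convergence on compact sets, and since all the $\widetilde{M}_{\mu}$ are holomorphic, the derivatives converge uniformly on compacta as well: $D\widetilde{M}_{\mu_k}(f_0) \to D\widetilde{M}_{\nu}(f_0) = 0$. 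On the other hand, for a primitive orbit measure $\mu_k$ supported on the $n_k$-cycle $\hat{x}_{n_k}$ one has, by the remark following the definition of the multiplier function, $\widetilde{M}_{\mu_k}(g) = \frac{1}{n_k} \log \lambda(\phi_g(\hat{x}_{n_k})) = \frac{1}{n_k}\log \lambda_{\hat{x}_{n_k}}(g)$ (up to the standard ambiguity of the branch of $\log$, which is irrelevant after differentiating). Hence
\begin{equation*}
D\widetilde{M}_{\mu_k}(f_0) = \frac{1}{n_k}\,\frac{D\lambda_{\hat{x}_{n_k}}([f_0])}{\lambda_{\hat{x}_{n_k}}([f_0])},
\end{equation*}
so the left-hand sides tending to $0$ forces $\frac{1}{n_k}\left|\frac{D\lambda_{\hat{x}_{n_k}}([f_0])}{\lambda_{\hat{x}_{n_k}}([f_0])}\right| \to 0$. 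Since $\{\hat{x}_{n_k}\}$ is one admissible sequence of cycles (filling in intermediate $n$ by repeating or by any choice — one only needs the infimum over sequences, and $\liminf$ along this sequence is $0$), the infimum in $(1)$ is zero, contradicting the hypothesis. Therefore $[f_0]$ is not a critical point of $\delta$.

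The main obstacle I expect is the passage from convergence of the functions $\widetilde{M}_{\mu_k} \to \widetilde{M}_{\nu}$ to convergence of their differentials at $f_0$, i.e. ensuring that $D\widetilde{M}_{\mu_k}([f_0])$ as an element of $\mathrm{Hom}(T_{[f_0]}\mathcal{H},\bbC)$ really does tend to $D\widetilde{M}_{\nu}([f_0])$; this is where holomorphicity is essential (Cauchy estimates upgrade $C^0$-convergence on compacta to $C^1$-convergence), and it must be combined carefully with the descent from $\widetilde{\mathcal{H}} \subset \mathrm{Rat}_d$ to $\mathcal{H} \subset \mathrm{rat}_d$ and with the choice of branch of $\log \lambda$ so that $\widetilde{M}_{\mu_k}$ is globally well-defined on a neighborhood of $f_0$. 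A secondary point requiring care is indexing: the sequence $\hat{x}_{n_k}$ produced by density need not have $n_k \to \infty$ monotonically or hit every $n$, but since the hypothesis is phrased as an infimum over all sequences of cycles with a $\liminf$ inside, exhibiting a single sequence along which the quantity tends to $0$ suffices.
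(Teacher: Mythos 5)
Your argument is correct and is essentially the paper's own proof run in the contrapositive direction: the paper uses the same three ingredients (the equivalence between critical points of $\delta$ and of $M_{\nu}$ coming from $G_{f_0}=\delta M_{\nu}$ and Proposition \ref{prop_metric}, the density of primitive orbit measures, and locally uniform convergence of multiplier functions upgrading to convergence of their derivatives) to pass directly from the hypothesis to a direction $\tilde{v}^{*}$ with $M_{\nu}'(\tilde{v}^{*})\neq 0$. The only noteworthy difference is organizational: your contrapositive phrasing, working with the holomorphic $\widetilde{M}_{\mu}$ and operator norms, sidesteps the small finite-dimensional compactness step the paper needs to extract a single tangent vector $\tilde{v}^{*}$ from the sequence $\tilde{v}_{n_k}$.
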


Before proving the theorem, we state two elementary lemmas.
\begin{lem}\label{lem:not-crit}
The following are equivalent.
\begin{enumerate}
\item The point $[f_0] \in\mathcal{H}$ is not a critical point of $\delta$.
\item Let $\widetilde{\mathcal{H}} \subset\mathrm{Rat}_d$ be a lift of $\mathcal{H}$ containing $f_0$. There exists $\tilde{v}^* \in T_{f_0}\widetilde{\mathcal{H}}$ such that $M_{\nu}'(\tilde{v}^*) \neq 0$. Here $\nu$ is the equilibrium state for $-\delta(f_0)\log|f_0'|$.
\end{enumerate}
\end{lem}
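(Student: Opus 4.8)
\textbf{Proof proposal for Lemma \ref{lem:not-crit}.}

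The plan is to reduce the statement to the equivalence, already established in the proof of Theorem \ref{thm:no-max}, between $[f_0]$ being a critical point of $\delta$ and $f_0$ being a critical point of $M_\nu$, and then to sharpen the ``only if'' direction using the complex multiplier function. First I would set up the lift: choose a lift $\widetilde{\mathcal{H}}\subset\mathrm{Rat}_d$ of $\mathcal{H}$ containing $f_0$, note that $\widetilde{\mathcal{H}}\to\mathcal{H}$ is a (branched) covering, so that $\delta$ and $M_\nu$ being critical at $[f_0]$ is equivalent to their lifts being critical at $f_0$ on $\widetilde{\mathcal{H}}$, except possibly at the branch point $z^d$ in the central component, which I would treat separately or exclude exactly as in Theorem \ref{thm:no-max}. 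Since $G_{f_0}=\delta M_\nu$ has a local minimum at $f_0$ by Proposition \ref{prop_metric}, we have $G_{f_0}'(f_0)=0$, i.e.\ $\delta'(\tilde v)M_\nu(f_0)+\delta(f_0)M_\nu'(\tilde v)=0$ for every $\tilde v\in T_{f_0}\widetilde{\mathcal{H}}$. Because $\delta(f_0)\neq 0$ and $M_\nu(f_0)\neq 0$ (it is the Lyapunov exponent of $f_0$ with respect to $\nu$, strictly positive by hyperbolicity), the linear functionals $\delta'$ and $M_\nu'$ on $T_{f_0}\widetilde{\mathcal{H}}$ are negative scalar multiples of one another; in particular one vanishes identically if and only if the other does.

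This immediately yields (2)$\Rightarrow$(1): if there is $\tilde v^*$ with $M_\nu'(\tilde v^*)\neq 0$, then $\delta'$ is not identically zero on $T_{f_0}\widetilde{\mathcal{H}}$, hence not on $T_{[f_0]}\mathcal{H}$, so $[f_0]$ is not a critical point of $\delta$. For the direction (1)$\Rightarrow$(2), suppose $[f_0]$ is not a critical point of $\delta$; then $\delta'$ is not identically zero on $T_{f_0}\widetilde{\mathcal{H}}$, hence by the proportionality $M_\nu'$ is not identically zero either, which is exactly the existence of the desired $\tilde v^*$. (If one worries about the branch point of the covering, one uses that near $z^d$ the form $\delta M_\nu''$ is positive-definite off $[z^d]$ as recorded in Theorem \ref{thm:no-max}, and handles $[z^d]$ as a local minimum directly.)

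The only genuine subtlety is making sure $M_\nu(f_0)\neq 0$ so that the proportionality between $\delta'$ and $M_\nu'$ is non-degenerate rather than vacuous; this is where I would invoke that $M_\nu(f_0)=\int_{J(f_0)}\log|f_0'|\,d\nu$ is the Lyapunov exponent of the equilibrium state $\nu$, which is strictly positive because $f_0$ is hyperbolic and $\nu$ has positive entropy (so the Lyapunov exponent dominates $h(f_0,\nu)/\delta(f_0)>0$). With this in hand the equivalence is purely formal. I expect the main obstacle, such as it is, to be bookkeeping around the covering $\widetilde{\mathcal{H}}\to\mathcal{H}$ at the branch point and confirming the normalization conventions for $\nu$; the analytic content is entirely supplied by Proposition \ref{prop_metric} and the analyticity of $M_\nu$ from Proposition \ref{thm_Manalytic}.
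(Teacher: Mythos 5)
Your proof is correct and follows essentially the same route as the paper, which simply invokes the chain-rule computation $0=(\delta M_\nu)'(\tilde v)=\delta'(\tilde v)M_\nu(f_0)+\delta(f_0)M_\nu'(\tilde v)$ coming from Proposition \ref{prop_metric} to conclude that $\delta'$ and $M_\nu'$ vanish simultaneously. Your explicit observation that $M_\nu(f_0)\neq 0$ (the positivity of the Lyapunov exponent) is a small but welcome addition, since the paper leaves that point implicit in ``the converse direction works the same way.''
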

\begin{proof}
The proof follows from the same argument as in the first paragraph of the proof of Theorem \ref{thm:no-max}.
\end{proof}

Furthermore, since primitive orbit measures are dense (Proposition \ref{density}), for $f_0\in \widetilde{\mathcal{H}}$, there exists a sequence $\mu_k$ of primitive orbit measures such that $\mu_k$ converges to the the equilibrium state $\nu$ of $-\delta(f_0)\log|f_0'(z)|$. Denote by $\hat{x}_{n_k}$ the periodic cycle which is the support of $\mu_k$.
Let $U(f_0)$ be as in Section \ref{intro}. For $f \in U(f_0)$, define
$$L_k(f):= \int_{J(f)} \log |f'| ~d(\phi_f)_*\mu_k= \frac{1}{n_k} \log |\lambda(\phi_f(\hat{x}_{n_k}))|,$$
where $\lambda(\phi_f(\hat{x}_{n_k}))$ is the multiplier of the cycle $\phi_f(\hat{x}_{n_k})$ under $f$.

\begin{lem} \label{lem_gnM}
	For any $\tilde{v} \in T_{f_0}\widetilde{\mathcal{H}}$, we have $\displaystyle \lim_{k \to \infty} L_k'(\tilde{v}) = M_{\nu}'(\tilde{v})$.
\end{lem}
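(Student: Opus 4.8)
The plan is to show that $L_k' \to M_\nu'$ as linear functionals on $T_{f_0}\widetilde{\mathcal{H}}$ by identifying each side with a derivative of an integral against a measure, and then exploiting the weak-$*$ convergence $\mu_k \to \nu$ together with the uniform analytic control on the integrands provided by Proposition \ref{thm_Manalytic}. Concretely, recall that for $\mu \in \Omega(J(f_0))$ the multiplier function is $M_\mu(f) = \int_{J(f_0)} \log|f' \circ \phi_f|\, d\mu$, so that $L_k = M_{\mu_k}$ and the target $M_\nu'(\tilde v)$ is just the derivative at $f_0$ of $M_\nu = M_{\mu_\infty}$ with $\mu_\infty = \nu$. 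Thus the lemma is precisely the statement that the map $\mu \mapsto M_\mu \in C^\infty(U(f_0),\bbR)$ is continuous at $\nu$ in a way that controls first derivatives, which is exactly the content of Proposition \ref{thm_Manalytic}(2): if $\mu_k \to \mu$ in $\Omega(J(f_0))$, then $M_{\mu_k} \to M_\mu$ in the $C^\infty$-topology, hence in particular the derivatives converge uniformly on compact subsets, so $M_{\mu_k}'(\tilde v) \to M_\nu'(\tilde v)$ for each fixed tangent vector $\tilde v$.

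First I would make explicit the identification $L_k(f) = M_{\mu_k}(f)$: the measure $\mu_k$ is the primitive orbit measure supported on the $n_k$-cycle $\hat x_{n_k}$, so $(\phi_f)_*\mu_k$ is the primitive orbit measure on $\phi_f(\hat x_{n_k})$, and by the Remark following the definition of the multiplier function, $M_{\mu_k}(f) = \frac{1}{n_k}\log|\lambda(\phi_f(\hat x_{n_k}))| = L_k(f)$; the two displayed formulas for $L_k$ in the text agree by the change-of-variables $\int_{J(f)}\log|f'|\,d(\phi_f)_*\mu_k = \int_{J(f_0)}\log|f'\circ\phi_f|\,d\mu_k$. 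Next I would fix a path $\tilde c(t)$ in $U(f_0)$ with $\tilde c(0) = f_0$, $\tilde c'(0) = \tilde v$, and write $L_k'(\tilde v) = \frac{d}{dt}\big|_{t=0} M_{\mu_k}(\tilde c(t))$ and likewise $M_\nu'(\tilde v) = \frac{d}{dt}\big|_{t=0} M_\nu(\tilde c(t))$. By Proposition \ref{density} the $\mu_k$ can indeed be chosen converging to $\nu$, and then invoking Proposition \ref{thm_Manalytic}(2) (or directly Lemma \ref{lem_unif} together with the harmonicity of each $M_{\mu_k}$, which upgrades uniform convergence to convergence of derivatives) gives that $\frac{d}{dt}M_{\mu_k}(\tilde c(t))$ converges uniformly for $t$ near $0$ to $\frac{d}{dt}M_\nu(\tilde c(t))$; evaluating at $t=0$ yields $L_k'(\tilde v) \to M_\nu'(\tilde v)$.

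I do not expect a genuine obstacle here, since the lemma is essentially a corollary of machinery already established; the only point requiring a little care is that $C^\infty$-convergence on $C^\infty(U(f_0),\bbR)$ is convergence of \emph{all} derivatives uniformly on compacta, so to conclude convergence of the first-order directional derivative $L_k'(\tilde v)$ at the single point $f_0$ it is enough to have uniform convergence of first derivatives on a compact neighborhood of $f_0$, which Lemma \ref{lem_unif} supplies once one notes (as in the proof of Proposition \ref{thm_Manalytic}) that each $M_{\mu_k}$ is harmonic and that locally uniform convergence of harmonic functions forces locally uniform convergence of their derivatives. Assembling these observations completes the proof.
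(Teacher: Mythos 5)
Your proof is correct and follows essentially the same route as the paper's: identify $L_k$ with the multiplier function $M_{\mu_k}$, and use that weak-$*$ convergence $\mu_k \to \nu$ forces locally uniform convergence of the harmonic functions $M_{\mu_k}$ to $M_\nu$, hence convergence of their derivatives. The only cosmetic difference is that you invoke Proposition \ref{thm_Manalytic}(2) (equivalently Lemma \ref{lem_unif} plus harmonicity) directly, whereas the paper re-derives the locally uniform convergence from pointwise convergence together with uniform boundedness of $L_k$ on compact subsets of $U(f_0)$.
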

\begin{proof}
	By definition of convergence in $\Omega(J(f_0))$, we have that $L_k(f)$ converges pointwise to $M_{\nu}(f)$. Now we show that the sequence $L_k(f)$ converges uniformly on compact subsets of $U(f_0)$ to $M_{\nu}(f)$. We only need to show that $L_k(f)$ is uniformly bounded on compact subsets of $U(f_0)$. The uniform boundedness follows from	
	$$L_k(f) = \frac{1}{n_k} \log |(f^{n_k})'(x)| \le\max\{\log|f'(z)|: z\in J(f)\}$$
	and compactness of $J(f)$. It follows that $ L_k$ converges locally uniformly to $M_{\nu}$. Hence the conclusion follows.
\end{proof}

\begin{proof}[Proof of Theorem \ref{thm_suffcond}]
	Let $\nu$ and $\mu_k$ be as above. By Lemmas \ref{lem:not-crit} and \ref{lem_gnM}, it suffices to show there exists $\tilde{v}^\ast\in T_{f_0}\widetilde{\mathcal{H}}$ such that $\displaystyle \lim_{k \to \infty} L_k'(\tilde{v}^\ast)\not=0$. Note that 
	$$L_k'(\tilde{v}^\ast)=\frac{1}{n_k} \frac{|\lambda_{\hat{x}_{n_k}}([f_0])|'(\tilde{v}^\ast)}{|\lambda_{\hat{x}_{n_k}}([f_0])|}.$$
	If the condition $(7.1)$ holds, we have that there is $\tilde{v}_{n_k}\in T_{P_0}\widetilde{\mathcal{H}}$ such that
	$$\liminf\limits_{k\to\infty}\frac{1}{n_k} \frac{|\lambda'_{\hat{x}_{n_k}}([f_0])(\tilde{v}_{n_k})|}{|\lambda_{\hat{x}_{n_k}}([f_0])|}\not=0.$$
	Since $T_{P_0}\mathcal{H}$ is finite dimensional, there exists $\tilde{v}^\ast\in T_{f_0}\widetilde{\mathcal{H}}$ such that 
	$$\liminf\limits_{k\to\infty}\frac{1}{n_k} \frac{|\lambda'_{\hat{x}_{n_k}}([f_0])(\tilde{v}^\ast)|}{|\lambda_{\hat{x}_{n_k}}([f_0])|}\not=0.$$
	It follows that 
	$$\liminf\limits_{k\to\infty}\frac{1}{n_k} \frac{|\lambda_{\hat{x}_{n_k}}([f_0])|'(\tilde{v}^\ast)}{|\lambda_{\hat{x}_{n_k}}([f_0])|}\not=0.$$
	Passing to a subsequence if necessary, we have that 
	$\lim\limits_{k \to \infty} L_k'(\tilde{v}^\ast)\not=0$. This completes the proof.
	
\end{proof}

\bibliographystyle{siam}
\bibliography{references}
\end{document}